\newcommand{\trasp}[1]{{#1}^\mathsf{T}}			
\newcommand{\traspinv}[1]{{#1}^\mathsf{-T}}	
\numberwithin{equation}{section}
\theoremstyle{plain}
\newtheorem*{thm*}{Theorem}
\newtheorem{thm}{Theorem}[section]
\newtheorem{cor}[thm]{Corollary}
\newtheorem{lem}[thm]{Lemma}
\newtheorem{prop}[thm]{Proposition}
\theoremstyle{definition}
\newtheorem{dfn}[thm]{Definition}
\theoremstyle{remark}
\newtheorem{rem}[thm]{Remark}
\renewcommand{\=}{\coloneqq}			
\newcommand{\N}{\mathds{N}}
\newcommand{\X}{\mathbb{X}}
\newcommand{\R}{\mathds{R}}
\newcommand{\eps}{\varepsilon}
\renewcommand{\S}{\mathbb{S}}
\newcommand{\OO}{\mathrm{O}}
\newcommand{\C}{\mathds{C}}
\newcommand{\SO}{\mathrm{SO}}
\renewcommand{\O}{\mathrm{O}}
\newcommand{\Mat}{\mathrm{Mat}}
\newcommand{\Id}{I}
\newcommand{\im}{\mathrm{im\,}}
\newcommand{\mscal}[2]{\langle#1,#2\rangle_{M}}
\newcommand{\mnorm}[1]{|#1|_M}
\DeclareMathOperator{\diag}{diag}
\newcommand{\B}{\mathcal B}
\newcommand{\nullity}[1]{n^0(#1)}		
\newcommand{\iMor}[1]{n^{-}\left(#1\right)}
\newcommand{\extiMor}[1]{\mathring{n}^{-}(#1)}
\newcommand{\extcoiMor}[1]{\mathring{n}^{+}(#1)}
 \newcommand{\coiMor}[1]{n^{+}(#1)}
 \newcommand{\Euc}[1]{\mathrm{Euc}(#1)}
 \renewcommand{\B}{\mathscr{B}}		
\renewcommand{\H}{\mathcal{H}}	
\newcommand{\Bsa}{\B^{\textup{sa}}}
\renewcommand{\=}{\coloneqq}			
\DeclareMathOperator{\spfl}{sf}			
\DeclareMathOperator{\sgn}{\mathrm{sgn}}		
\newcommand{\simbolovettore}[1]{{\boldsymbol{#1}}}
\newcommand{\ve}{\simbolovettore{e}}
\newcommand{\vi}{\simbolovettore{i}}
\newcommand{\vj}{\simbolovettore{j}}
\newcommand{\vk}{\simbolovettore{k}}
\newcommand{\zero}{\boldsymbol{0}}
\newcommand{\rvline}{\hspace*{-\arraycolsep}\vline\hspace*{-\arraycolsep}}
\title{Spectral stability, spectral flow and circular relative equilibria for the  Newtonian $n$-body problem}
\author{Luca Asselle, Alessandro Portaluri, Li Wu}
\date{\today}
\date{\today}
\begin{document}
 \maketitle

 \begin{abstract}
For the Newtonian (gravitational) $n$-body problem in the Euclidean $d$-dimensional space, $d\ge 2$, the simplest possible periodic solutions are provided by circular relative equilibria, (RE) for short, namely solutions in which each body rigidly rotates about the center of mass and the configuration of the whole system is  constant in time and central (or, more generally, balanced) configuration. 
For $d\le  3$, the only possible (RE) are planar, but in dimension four it is possible to get truly four dimensional (RE). 

A classical problem in celestial mechanics aims at relating the (in-)stability properties of a (RE) to the index properties of the central (or, more generally, balanced) configuration generating it. In this paper, we provide sufficient 
conditions that imply the spectral instability of planar and non-planar (RE) in $\R^4$ generated by a central configuration, 
thus answering some of the questions raised in \cite[Page 63]{Moe14}. As a corollary, we retrieve 
a classical result of Hu and Sun \cite{HS09} on the linear instability of planar (RE) whose generating central configuration is non-degenerate and has odd Morse index, and fix a gap in the 
statement of  \cite[Theorem 1]{BJP14} about the spectral instability of planar (RE) whose (possibly degenerate) generating central configuration has odd Morse index. 

The key ingredients are a new formula of independent interest that allows to compute the spectral flow of a path of symmetric matrices having degenerate starting point, and 
a symplectic decomposition of the phase space of the linearized Hamiltonian system along a given (RE) which is inspired by Meyer and Schmidt's planar decomposition \cite{MS05} and which allows us 
to rule out the uninteresting part of the dynamics corresponding to the translational and (partially) to the rotational symmetry of the problem. 

\vspace{2mm}

{\bf Keywords:\/} $n$-body problem,   Central Configurations, Spectral (linear) instability, Spectral flow.   
\end{abstract}

\tableofcontents

%

\section{Introduction and description of the problem}

The Newtonian $n$-body problem concerns the study of the dynamics of $n$ point particles with positions $q_i \in \R^d$, $d\ge 2$, and  masses $m_i>0$, moving under the influence of their mutual gravitational attraction. Newton's law of motion for the gravitational $n$-body problem reads
\begin{equation}\label{eq:Newton-law}
m_i\ddot q_i = \sum_{j \neq i} \dfrac{m_im_j(q_i-q_j)}{|q_i-q_j|^3}
\end{equation}
where $|q_i-q_j|$ is the Euclidean distance between the $i$-th and $j$-th particle (the gravitational constant being normalized to $G=1$). Let $q= (q_1, \ldots, q_n) \in \R^{nd}$ be the {\sc configuration vector\/} and let 
\begin{equation}\label{eq:potential}
U(q)=\sum_{i <j}\dfrac{m_im_j}{|q_i-q_j|}
\end{equation}
be the {\sc Newtonian potential.\/} One readily sees that the RHS of~\eqref{eq:Newton-law}, that is the total force acting on the $i$-th particle, can be written as $F_i=\nabla_i U$, 
where $\nabla_i$ denotes the gradient with respect to the $d$ components of the $i$-th particle. Then Equation~\eqref{eq:Newton-law} can be written equivalently as
\begin{equation}\label{eq:Newton-cpt}
	M \ddot q= \nabla U(q) ,
\end{equation}
where $\nabla $ is the $dn$-dimensional gradient and $M:=\diag(m_1 \Id_d, \ldots, m_n \Id_d)$ is the {\sc mass matrix\/}. Equation~\eqref{eq:Newton-cpt} defines a system of second-order ordinary differential equations on $\mathbb X:= \R^{nd} \setminus\Delta$, where
$$\Delta:=\Big \{q \in \R^{nd}\ \Big |\ U(q)=+\infty\Big \}= \Big \{q\in \R^{nd}\ \Big |\ q_i=q_j, \text{for some}\ i\neq j\Big \}$$
is the {\sc collision set\/}. Also, Equation~\eqref{eq:Newton-cpt} is invariant under translations and rotations. Symmetries under translations yield via Noether's theorem to the conservation of the {\sc total momentum\/}. This implies that the center of mass has an inertial motion, and for this reason one can always assume that the center of mass be fixed at the origin. Invariance under rotations yields instead to the conservation 
of the {\sc angular momentum} (more about this can be found in the appendix). 

Among all configurations of the system, a special role is played by the so-called {\sc central configurations\/}, that is configuration vectors $q\in\mathbb X$ satysfying
\begin{equation}\label{eq:cc}
\nabla U(q)+  \frac{U(q)}{\langle Mq,q\rangle} Mq=0.
\end{equation}
In other words, central configurations are special arrangements of the particles in which the acceleration of each mass points towards the center of mass and is proportional to the distance to the center of mass. As it is nowadays well-known, central configurations play a key role in the understanding of the dynamics of the $n$-body problem for many reasons: they generate explicit periodic solutions of~\eqref{eq:Newton-cpt}, they govern the behavior of colliding solutions near collisions, they mark changes in the topology of the integral manifolds, etc.
 
The first solutions of~\eqref{eq:cc} were discovered by Euler, who determined all collinear solutions for the $3$-body problem, and Lagrange, who showed that up to symmetries the only non-collinear 
central configuration for the $3$-body problem is given by the equilateral triangle. For larger values of $n$ it is impossible to solve~\eqref{eq:cc} explicitly. 
Nevertheless, one can still say many things about central configurations by exploiting their variational characterization as critical points of the restriction $\widehat U$ of the Newtonian potential $U$ to the inertia ellipsoid 
\[
\S= \Set{q \in \R^{nd}| \sum_{i=1}^n m_i q_i=0,\  \langle Mq,q \rangle =1}.
\]
Indeed, since~\eqref{eq:cc} is invariant under scalings, it is not restrictive to assume that $\langle Mq,q\rangle =1$, that is that the central configuration be {\sc normalized} (hereafter, whenever talking about central configurations we implicitly assume that the condition $\langle Mq,q\rangle =1$ hold).

Since $\widehat U$ is $\SO(d)$-invariant (actually, $\O(d)$-invariant), central configurations are never isolated as critical points of $\widehat U$ but rather come in $\SO(d)$-families. In particular, they are 
always degenerate as critical points of $\widehat U$. We shall notice that the $\SO(d)$-action is not free unless $d=2$, hence quotienting out $\mathbb S$ by the $\SO(d)$ does not lead to a quotient manifold 
(actually, not even to a quotient orbifold) whenever $d\ge 3$. 

As already mentioned, central configurations give rise to simple, explicit solutions of the $n$-body problem with the property that the configuration of the particles is at any time 
similar to the initial (central) configuration. In other words, the configuration of the particles at any time is up to rotations, translations, and dilations, identical to the initial configuration.
Such explicit solutions are usually called {\sc homographic\/} or {\sc self-similar\/} solutions. In particular, every planar central configuration gives rise to a family of periodic solutions of the $n$-body problem in which each of the bodies moves on a Keplerian elliptical orbit. As a special case, the circular Keplerian orbits give rise to homographic solutions for which the configuration rigidly rotates at constant angular speed $k:=\sqrt{U(q)}$ about the center of mass.  Such periodic orbits are usually called
 {\sc (circular) relative equilibria\/}, (RE) for short, the reason being that such solutions become true equilibrium solutions in a uniformly rotating coordinate system. 

For the $n$-body problem in the physically relevant dimensions $d \le 3$ all possible homographic solutions of the $n$-body problem are those defined by central configurations. 
In higher dimension instead new interesting phenomena appear due to the greater complexity of rotations: for instance, in $\R^4$ planar and spatial central configurations can give rise to four dimensional (RE), whereas planar resp. spatial non-planar central configuration in $\R^3$ only give rise to planar (RE) resp. to homothetic collapse motions. Even more striking, the fact that in $\R^4$ it is possible to rotate in two mutually orthogonal planes with different angular speeds leads to new ways of balancing the gravitational forces with centrifugal forces in order to get new (RE) (see the notion of {\sc balanced configuration} introduced in \cite{AC98} and the further developments in \cite{AD20,AP20,AFP20}).

The nice feature of (RE) is that they become true equilibria in rotating coordinates, and as such it is natural and dynamically interesting to investigate their stability properties. 
In this paper we will focus only on 2D and 4D (RE) in $\R^4$ defined by central configurations, leaving the ``balanced case'' to future work, and perform the study 
of the stability properties of such (RE) by linearizing Newton's equation~\eqref{eq:Newton-cpt} in a uniformly rotating frame about a given (RE) and analyzing the spectral properties of the associated (linear) Hamiltonian $8n\times 8n$-matrix. This is in general a very difficult task because of several reasons: first, the dimension of the Hamiltonian matrix becomes larger as $n$ increases. Second, central configurations are not known explicitly besides in very few particular cases. Finally, the symmetries of Newton's equation~\eqref{eq:Newton-cpt} imply that the phase space of the linear Hamiltonian system at a given (RE) 
decomposes into the direct sum $E_1\oplus E_2\oplus E_3$ of three invariant symplectic subspaces, the first two of which reflect the translational resp. rotational 
invariance of the problem. The subspaces $E_1$ and $E_2$ would lead to instability if not ruled out a priori. For this reason, with slight abuse of terminology we will call a (RE) 
\begin{itemize}
\item {\sc spectrally stable\/} if the eigenvalues of the restriction of the associated Hamiltonian matrix to $E_3$ are all purely imaginary, and
\item  {\sc linearly stable\/} if in addition the associated Hamiltonian matrix restricted to $E_3$ is diagonalizable.
\end{itemize}

A long standing open problem aims at relating the linear and spectral stability properties of a (RE) with the inertia indices (as a critical point of $\widehat U$) of the central configuration generating it. 
In this respect we shall mention the following conjecture which is due to Moeckel:

\vspace{2mm}

\noindent
{\bf Conjecture (Moeckel).\/} {\em If a (RE) in the planar $n$-body problem is linearly stable, then the corresponding central configuration is a non-degenerate minimum of $\widehat U$.\/}

\vspace{2mm}

It is well-known that the converse of Moeckel's conjecture is false. For example, the (RE) corresponding to the Lagrange equilateral triangle with three equal masses is linearly unstable, even though the equilateral triangle is a non-degenerate minimum of $\widehat U$, see \cite{BJP14, BJP16} and references therein.

Even if Moeckel's conjecture still remains widely open, several progresses towards its solution have been made in the last decades. A major breakthrough has been made in 2009 by X. Hu and S. Sun \cite{HS09}. There, the authors proved that a (RE) generated by a central configuration $q$ with odd Morse index as critical point of $\widehat U$ is always linearly unstable. In particular, a non-degenerate central configuration generating a linearly stable (RE) must have even Morse index as a critical point of $\widehat U$. 
Some years later, authors in \cite{BJP14} claimed that Hu and Sun's result could be upgraded to show spectral instability of (RE) whose generating central configuration 
has even nullity and odd Morse index as a critical point of $\widehat U$. Unfortunately, the proof of their result contains a gap 
because of a missing condition about the eigenvalue 0. Namely, the statement 

\begin{center}
\textit{``If the Hamiltonian matrix $JA$, $A$ symmetric matrix, is purely imaginary, then $\iMor{A}$ is even``},
\end{center}
which is claimed in \cite[Theorem 3.11]{BJP14}, is in full generality false (see \cite{DZ21} for a counterexample). However, the proof given in \cite{BJP14} is correct under the additional 
assumption that $JA$ do not have non-trivial Jordan blocks corresponding to the eigenvalue zero. As a consequence, the main result of \cite{BJP14} still remains true in a large variety of situations,
for instance whenever the central configuration is non-degenerate. 

In the present paper we push the study of the stability properties of (RE) further by investigating the relationship between spectral and linear instability of (RE) in $\R^4$ 
and the inertia indices of the corresponding central configurations. As a byproduct, we will fix the gap in the proof of the main result of \cite{BJP14}.

As we shall see in Section 2, the aforementioned decomposition of the phase space of the linearized Hamiltonian system into symplectic invariant subspaces depends on the fact that the considered (RE) be planar or not (in particular, the dimension of $E_3$ is different in the two cases). The following theorem is the main result of the present paper. 
%
In what follows $B_3$ denotes the symmetric matrix obtained by multiplying the restriction of the Hamiltonian matrix to $E_3$ with the standard complex structure $J$. 

\begin{thm}\label{thm:main-intro}
Let $\gamma $ be a (RE) in $\R^4$. If 
\begin{itemize}
\item $\gamma$ is  non-planar, or planar but generated by a collinear central configuration, and
\[
\iMor{B_3\big|_{\ker (JB_3)^{8n-16}}} 
				-\iMor{B_3}\equiv 1 
				\quad \mod 2,
\]
or
\item $\gamma$ is planar and generated by a (planar) non-collinear central configuration, and
 \[
\iMor{B_3\big|_{\ker (JB_3)^{8n-20}}} 
				-\iMor{B_3}\equiv 1 
				\quad \mod 2,
\]
\end{itemize}
then $\gamma$  is spectrally unstable. 
\end{thm}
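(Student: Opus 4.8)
The plan is to reduce the spectral instability statement to a spectral flow computation, using the well-known criterion that a Hamiltonian matrix $JB_3$ fails to be spectrally stable precisely when it has a nonzero eigenvalue with nonzero real part, and to detect this via a parity obstruction coming from the spectral flow of a suitable path connecting $B_3$ to a reference endpoint. Concretely, I would consider the one-parameter family $B_3(\lambda) = B_3 + \lambda P$ (or an analogous path arising naturally from the linearized Hamiltonian system, e.g. rescaling the angular speed $k$ or the mass/potential terms) with $\lambda \in [0,1]$, chosen so that at $\lambda = 0$ the Hamiltonian $JB_3(0)$ is manifestly spectrally stable (all eigenvalues purely imaginary, e.g. because $B_3(0)$ is positive definite or block-diagonal with controlled spectrum) while at $\lambda = 1$ we recover $JB_3$. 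The contrapositive strategy: \emph{assume} $\gamma$ is spectrally stable, so $JB_3(\lambda)$ stays spectrally stable along the whole path after possibly deforming; then the spectral flow of $\lambda \mapsto B_3(\lambda)$ through $0$ must have a definite parity, and this parity is governed exactly by the quantity $\iMor{B_3|_{\ker(JB_3)^{N}}} - \iMor{B_3}$ (with $N = 8n-16$ or $8n-20$). The hypothesis that this difference is odd then contradicts spectral stability.

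\smallskip

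The technical heart is the \emph{new spectral-flow formula for paths of symmetric matrices with degenerate starting point} advertised in the abstract: the usual crossing-form computation of the spectral flow requires nondegeneracy at the endpoints, but here the relevant endpoint (the one reflecting the central configuration's degeneracy, and the residual rotational symmetry not killed by passing to $E_3$) is genuinely degenerate. So the first real step is to invoke that formula to express $\spfl(B_3(\cdot))$ in terms of (i) the Morse index $\iMor{B_3}$ at the regular endpoint, and (ii) a correction term at the degenerate endpoint which, after the symplectic decomposition, is precisely $\iMor{B_3|_{\ker(JB_3)^{N}}}$ — the Morse index of $B_3$ restricted to the generalized kernel of $JB_3$ of the appropriate exponent. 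The exponent $8n-16$ versus $8n-20$ is dictated by $\dim E_3$ in the planar-collinear/non-planar case versus the planar non-collinear case, which was set up in Section~2; the $\ker(JB_3)^{N}$ is the generalized nullspace, i.e. $N$ iterations of $JB_3$, large enough to stabilize to the true generalized eigenspace for eigenvalue $0$.

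\smallskip

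The logical skeleton is then: (1) recall the symplectic decomposition of Section~2 and identify $B_3$ and $\dim E_3$ in the two cases; (2) construct the deformation path $B_3(\lambda)$ with a spectrally stable, nondegenerate-enough endpoint at $\lambda=0$; (3) apply the new spectral-flow formula to evaluate $\spfl(B_3(\cdot)) \bmod 2$ in terms of $\iMor{B_3|_{\ker(JB_3)^{N}}} - \iMor{B_3}$; (4) invoke the standard fact (as corrected after \cite{DZ21}, valid because the deformation can be arranged to avoid nontrivial zero-eigenvalue Jordan blocks at $\lambda=1$, or else the correction term absorbs them) that spectral stability of $JB_3$ forces this mod-$2$ spectral flow to vanish; (5) conclude that the odd-parity hypothesis yields spectral instability. **The main obstacle** I anticipate is step (2)–(3): choosing the deformation path so that the degeneracy is \emph{only} at $\lambda = 0$ (no intermediate crossings that are not transverse, no new degeneracies at $\lambda = 1$ beyond those handled by the generalized kernel), and verifying that the correction term in the new spectral-flow formula matches the restricted Morse index $\iMor{B_3|_{\ker(JB_3)^{N}}}$ \emph{on the nose} rather than up to an unknown constant — this requires carefully tracking how the $\SO(4)$-symmetry residue sits inside $E_3$ and interacts with the generalized kernel, and is exactly the place where the planar vs.\ non-planar dichotomy forces the two different exponents.
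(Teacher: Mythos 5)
Your plan has the right silhouette (argue by contradiction, run an affine path of symmetric matrices through $B_3$, use the new degenerate-endpoint spectral-flow formula, and extract a parity identity contradicting the hypothesis), but the decisive ingredient is missing: the choice of the path. The paper does not deform $B_3$ to a spectrally stable reference by rescaling $k$ or the masses; it takes the path $D(t)=B_3+tG$ on $[0,+\infty)$, where $G=iJ$ is the Krein matrix on the complexification. This specific direction is what makes everything compute: $\ker D(t)=\ker(JB_3-itI)$, so crossing instants are in bijection with the purely imaginary eigenvalues of $JB_3$ with non-negative imaginary part; under the spectral-stability assumption these exhaust the spectrum, so the dimensions of the generalized eigenspaces $\H_{t_*}$ satisfy $2p=2\sum\dim\H_{t_*}+\dim\H_0$; each positive crossing contributes the Krein signature on $\H_{t_*}$, which is $\equiv\dim\H_{t_*}\bmod 2$ because the Krein form is nondegenerate there; the crossing at $t=0$ is evaluated by the degenerate-start formula as $\iMor{B_3|_{\ker(JB_3)^{2p}}}-\tfrac{1}{2}\dim\H_0$; and for $T$ large $\iMor{D(T)}=\iMor{G}=p$. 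Summing these gives exactly $\iMor{B_3|_{\ker(JB_3)^{2p}}}\equiv\iMor{B_3}\bmod 2$ (with $2p=8n-16$ or $8n-20$ from the dimension of $E_3$), contradicting the hypothesis. A path obtained by varying the angular speed or the potential has none of these properties, and its crossings have no a priori relation to the spectrum of $JB_3$.

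Two steps of your outline are, as written, unjustified and would not survive. First, you assume that spectral stability of $JB_3$ lets you keep $JB_3(\lambda)$ spectrally stable along the whole deformation; spectral stability of the endpoint says nothing about the intermediate matrices of an ad hoc path, and nothing in your argument controls where their eigenvalues go. Second, the ``standard fact'' you invoke in step (4) --- that spectral stability of $JB_3$ forces the mod-$2$ spectral flow of your path to vanish --- is not a standard fact for an arbitrary path: for the Krein-direction path it is precisely the content of the paper's key theorem (the corrected version of the claim from \cite{BJP14}), and the counterexample of \cite{DZ21} recalled in the paper shows such parity statements are false unless the generalized zero-eigenspace term $\iMor{B_3|_{\ker(JB_3)^{N}}}$ is carried along. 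Relatedly, you cannot ``arrange to avoid'' nontrivial Jordan blocks of $JB_3$ at the eigenvalue zero: in the non-planar case $B_3$ is forced to be degenerate by the residual $\SO(4)$-symmetry even after passing to $E_3$, which is exactly why the degenerate-starting-point formula and the restricted index appear in the statement in the first place.
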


As in  \cite{HS09}, one of the main ingredients of the proof of Theorem~\ref{thm:main-intro} is the use of the mod 2 spectral flow for affine paths 
of Hermitian matrices. This allows us to deduce information 
about the parity of the inertia indices of $B_3$ under the assumption that the spectrum of $JB_3$ be purely imaginary. However, the proof is here considerably more 
complicated than in \cite{HS09}. Indeed, there one can use simple and classical formulas (based on the so-called crossing forms) 
for computing the spectral flow in case of regular crossings, whereas in our case one has to use refined formulas that allows to compute the spectral flow 
even in cases where the crossings are not regular (because of the possibly non-trivial Jordan blocks structure). Also, in most of the cases the starting point of the 
considered affine path will be degenerate (even after restriction to $E_3$), and this fact forces us to develop a new formula of independent interest that allows 
to compute the spectral flow in case of degenerate starting points, see Section 3.

As an application of Theorem~\ref{thm:main-intro} and the discussion about the nullity of the matrix $B_3$ resp. of the Hessian $H(q)$ (see Subsection~\ref{subsec:Morse-Bott}), 
we get the second main result of the paper relating  the linear and spectral stability properties of a (RE) with the inertia indices of the corresponding central configuration. 

\begin{thm} \label{thm:main-2-intro}
Let $q$ be a central configuration, and let $\gamma$ be the corresponding (RE). Then the following hold:
\begin{enumerate}
\item If $q$ is non-planar and Morse-Bott non-degenerate, or arbitrary and Morse-Bott degenerate with $n^0(q)$ even, then $\gamma$ is linearly unstable. 
\item Suppose that 
$$n^-\Big (B_3 \Big |_{\ker (JB_3)^{8n-8-2k}}\Big )\equiv 0 \quad \text{mod}\ 2,$$
where $k=4$ in case of a  non-planar, or planar but generated by a collinear central configuration, (RE), and $k=6$ otherwise. If 
$$n^-(q) + n^0(q) \equiv 0 \quad \text{mod}\ 2,$$
then $\gamma$ is spectrally unstable. In particular, when $q$ is Morse-Bott non-degenerate we have the following two statements:
\begin{enumerate}
\item If $q$ is non-planar and $n^-(q)$ is even, then $\gamma$ is spectrally unstable. 
\item If $q$ is planar and $n^-(q)$ is odd, then $\gamma$ is spectrally unstable.
\end{enumerate}
\end{enumerate}
\end{thm}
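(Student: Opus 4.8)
The plan is to deduce Theorem~\ref{thm:main-2-intro} from Theorem~\ref{thm:main-intro} by translating everything into statements about the inertia indices of the symmetric matrix $B_3$, using the Morse--Bott analysis of Subsection~\ref{subsec:Morse-Bott} that relates the nullity and negative inertia index of $B_3$ to those of the Hessian $H(q)$ of $\widehat U$ at the central configuration $q$. The key bookkeeping input will be the precise value of $n^0(B_3)$ (equivalently, of $\dim\ker(JB_3)^{\infty}$, the generalized kernel) expressed in terms of $n^0(q)$, and the relation between $n^-(B_3)$ and $n^-(q)$; once these are in hand, the two congruence hypotheses of Theorem~\ref{thm:main-intro} become the congruence hypotheses stated here.

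First I would prove part (1). If $\gamma$ is spectrally stable, then $JB_3$ has purely imaginary spectrum, and by the mod~$2$ spectral-flow argument underlying Theorem~\ref{thm:main-intro} (and already used in \cite{HS09}) the relevant difference of inertia indices, namely $n^-(B_3|_{\ker(JB_3)^{8n-8-2k}}) - n^-(B_3)$, must be $\equiv 0 \pmod 2$; were $\gamma$ even linearly stable this still forces the same parity constraint. So it suffices to show that under the hypotheses of (1) this difference is forced to be odd, contradicting (spectral, hence a fortiori linear) stability. When $q$ is non-planar and Morse--Bott non-degenerate, $n^0(q)$ takes the value dictated by the $\SO(4)$-orbit dimension, and the Morse--Bott computation pins down $n^0(B_3)$ and $\ker(JB_3)^{\infty}$ exactly; plugging into the index jump and using that $n^-(B_3)$ has a fixed parity relative to $n^-(q)$ yields the odd value. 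The Morse--Bott degenerate case with $n^0(q)$ even is handled identically, the evenness of $n^0(q)$ being exactly what is needed to keep the parity of the correction term unambiguous. Hence $\gamma$ is linearly unstable.

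Next, for part (2): assume $n^-(B_3|_{\ker(JB_3)^{8n-8-2k}}) \equiv 0 \pmod 2$ and $n^-(q)+n^0(q)\equiv 0\pmod 2$. Using the Morse--Bott dictionary, $n^-(q)+n^0(q)$ determines $n^-(B_3)$ modulo~$2$ (up to the fixed additive constant coming from the symplectic reduction of the translational and rotational directions $E_1\oplus E_2$, and from the part of $E_3$ responsible for the remaining rotational degeneracy), and the subspace $\ker(JB_3)^{8n-8-2k}$ in the hypothesis matches $\ker(JB_3)^{8n-16}$ when $k=4$ and $\ker(JB_3)^{8n-20}$ when $k=6$. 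A direct check then shows that the two hypotheses together force
\[
\iMor{B_3\big|_{\ker(JB_3)^{8n-2-2k}}} - \iMor{B_3} \equiv 1 \pmod 2,
\]
which is precisely the hypothesis of Theorem~\ref{thm:main-intro} in the respective case (collinear/non-planar versus planar non-collinear); hence $\gamma$ is spectrally unstable. The special sub-statements (a) and (b) follow by specializing: when $q$ is Morse--Bott non-degenerate, $n^0(q)$ equals the dimension of the $\SO(4)$- resp. $\SO(2)$-stabilizer orbit, which is even in the non-planar case and odd in the planar case, so $n^-(q)+n^0(q)\equiv 0$ becomes $n^-(q)$ even resp. $n^-(q)$ odd, and the first hypothesis is automatically satisfied because non-degeneracy collapses the iterated kernel to its generic size.

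The main obstacle I anticipate is the exact determination of the generalized kernel $\ker(JB_3)^{\infty}$ and of the dimensions $8n-16$, $8n-20$ appearing in the iterated kernels: this requires a careful analysis of the Jordan block structure of $JB_3$ at the eigenvalue~$0$ coming from the residual rotational symmetry (the partial rotational invariance that is \emph{not} already quotiented out into $E_2$), and of how this interacts with the Morse--Bott nullity of $H(q)$. Getting the additive constants and the parity of $n^-(B_3)$ versus $n^-(q)$ exactly right — rather than merely up to an unknown parity — is the crux, and is exactly the point where the gap in \cite{BJP14} occurred; the new spectral-flow formula for degenerate starting points from Section~3 is what makes this bookkeeping rigorous.
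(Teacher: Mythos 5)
Your treatment of Item (2) is essentially the paper's argument run in the direct rather than contrapositive direction: one uses $n^-(B_3)=n^+(q)=4n-5-n^-(q)-n^0(q)$ (the removed directions being tangent to the group orbit, hence in $\ker H(q)$), so the two congruence hypotheses force $\iMor{B_3|_{\ker(JB_3)^{8n-8-2k}}}-\iMor{B_3}\equiv 1 \pmod 2$ and Theorem~\ref{thm:main-intro} applies; modulo the unverified ``direct check'' and the typo $8n-2-2k$, that part is sound and matches the paper.

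However, your route for Item (1) has a genuine gap. You propose to contradict (spectral, hence linear) stability by showing that the hypotheses of (1) force the difference $\iMor{B_3|_{\ker(JB_3)^{8n-8-2k}}}-\iMor{B_3}$ to be odd. This cannot work: the hypotheses of (1) constrain only $n^0(q)$, hence only $\nullity{B_3}$, and say nothing about negative inertia indices; an odd-dimensional kernel is perfectly compatible with the difference being even and with $\sigma(JB_3)\subset i\R$ (the paper's own example $A=\diag(-2,1,1,-1,1,0)$ in Remark 4.3 has $n^0(A)=1$, purely imaginary spectrum of $JA$, and difference $\equiv 0$). Indeed Item (1) claims only \emph{linear} instability, not spectral instability, so no argument producing the odd parity of that difference can be available. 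The actual mechanism is different and simpler: under the hypotheses one computes $\nullity{B_3^*}=n^0(q)-3$ (or $n^0(q)-5$ in the planar non-collinear case), which is odd; since the generalized eigenspace of the eigenvalue $0$ of a Hamiltonian matrix is a symplectic, hence even-dimensional, subspace, one must have $\ker(JB_3^*)\subsetneq\ker(JB_3^*)^{8n-8-2k}$, i.e.\ a non-trivial Jordan block at $0$, so $JB_3^*$ is not diagonalizable and $\gamma$ is linearly unstable. Separately, your claim in (a)--(b) that the first hypothesis of Item (2) is ``automatically satisfied'' under Morse--Bott non-degeneracy is false in the non-planar case: there $\nullity{B_3^d}=3\neq 0$, so the generalized kernel of $JB_3^d$ is at least $4$-dimensional and $\iMor{B_3^d|_{\ker(JB_3^d)^{8n-16}}}$ is not controlled; in the paper (a) and (b) remain conditional on that hypothesis, and only the planar cases ($\nullity{B_3^s}=0$) make it automatic.
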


The first assumption in Item 2 of Theorem~\ref{thm:main-2} is satisfied for instance if 
$q$ is a Morse-Bott non-degenerate planar central configuration generating a planar (RE) $\gamma$. 

Our last result concerns the spectral and linear instability of a planar relative equilibrium in the plane. 
Such a result generalizes the main results of \cite{HS09} and \cite{BJP14} while at the same time fixing the aforementioned gap in the proof of the latter. In this case, the decomposition of the phase space is found in \cite{MS05}. For sake of simplicity, we  adopt the same notation as above.

\begin{thm} \label{thm:main-intro-planarcase}
Let $\gamma$ be a (RE) for the planar $n$-body problem. If 
 \[
\iMor{B_3\big|_{\ker (JB_3)^{4n-8}}} 
				-\iMor{B_3}\equiv 1 
				\quad \mod 2,
\]
then $\gamma$ is spectrally unstable. In particular, if $JB_3$ has no non-trivial Jordan blocks corresponding to the zero eigenvalue and   
\[
\iMor{B_3}\equiv 1 
				\quad \mod 2,
\]
then $\gamma$ is spectrally unstable. 
\end{thm}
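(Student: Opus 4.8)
\emph{Strategy and reduction.} I would argue by contraposition, following the scheme of \cite{HS09}, but with the crossing‑form computations replaced by the spectral‑flow machinery of Section~3; the genuinely new ingredient is the formula for the spectral flow of an affine path of Hermitian matrices with a \emph{degenerate starting point}. By the symplectic decomposition of \cite{MS05}, the phase space $\R^{4n}$ of the Hamiltonian system obtained by linearising Newton's equation along $\gamma$ in uniformly rotating coordinates splits as $E_1\oplus E_2\oplus E_3$ into invariant symplectic subspaces with $\dim E_3=4n-8$; the summands $E_1,E_2$ carry the dynamics of the translational and (part of the) rotational symmetry together with the one‑parameter family of relative equilibria, and by definition $\gamma$ is spectrally stable if and only if $\mathrm{spec}(JB_3)\subset\mathrm{i}\R$, where $B_3$ is the symmetric matrix for which $JB_3$ is the restriction of the linearised vector field to $E_3$. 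Since $\dim E_3=4n-8$, the space $\ker(JB_3)^{4n-8}$ is the generalised (algebraic) kernel of $JB_3$; invertibility of $J$ gives $\ker(JB_3)=\ker B_3$, and $\nu\coloneqq\dim\ker(JB_3)^{4n-8}$ is even, the symplectic form being non‑degenerate on a generalised eigenspace of a Hamiltonian matrix. If $JB_3$ has no non‑trivial Jordan block at $0$, then $\ker(JB_3)^{4n-8}=\ker B_3$, on which $B_3$ vanishes identically, so $\iMor{B_3\big|_{\ker(JB_3)^{4n-8}}}=0$ and the ``in particular'' assertion is a special case of the following statement, which is what I would prove: \emph{if $\mathrm{spec}(JB_3)\subset\mathrm{i}\R$, then $\iMor{B_3}\equiv\iMor{B_3\big|_{\ker(JB_3)^{4n-8}}}\mod 2$.}

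\emph{An affine path of Hermitian matrices.} On the complexification $(E_3)_\C$ both $B_3$ and $\mathrm{i}J$ are Hermitian (because $J^{\mathsf T}=-J$), and $\mathrm{i}J$ is an involution whose $\pm1$‑eigenspaces each have dimension $2n-4$. Fix $T$ sufficiently large and consider the affine path of Hermitian matrices
\[
A(s)\;\coloneqq\;B_3+s\,\mathrm{i}J,\qquad s\in[0,T].
\]
Since $(\mathrm{i}J)^{-1}=\mathrm{i}J$, one has $\det A(s)=\det(\mathrm{i}J)\cdot\chi(s)$, where $\chi$ is the characteristic polynomial of $-\mathrm{i}JB_3$, so that $A(s)$ is singular exactly when $\mathrm{i}s\in\mathrm{spec}(JB_3)$. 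Under the assumption $\mathrm{spec}(JB_3)\subset\mathrm{i}\R$, and using $\mathrm{spec}(JB_3)=-\mathrm{spec}(JB_3)$, the crossing instants of $A$ in $[0,T]$ are $s=0$ together with the finitely many $0<\beta_1<\dots<\beta_r$ for which $\mathrm{i}\beta_\ell\in\mathrm{spec}(JB_3)$. For $T$ large $A(T)$ is non‑degenerate with $\iMor{A(T)}=2n-4$; the starting point $A(0)=B_3$ is degenerate precisely when $0\in\mathrm{spec}(JB_3)$ (the delicate case); and for all small $\delta>0$ the matrix $A(\delta)$ is non‑degenerate with $\iMor{A(\delta)}=\iMor{B_3}+d^{-}$, where $d^{-}$ is the number of eigenvalue branches of $A$ issuing from $0$ that are negative for small $s>0$.

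\emph{Computing the mod $2$ spectral flow and concluding.} Since $\operatorname{sgn}\det A(s)=(-1)^{\iMor{A(s)}}$ at non‑degenerate instants, the parity of $\iMor{A(\delta)}-\iMor{A(T)}$ equals that of the number of real zeros of $\det A$ in $(\delta,T)$ counted with multiplicity; these zeros are the $\beta_\ell$, each of multiplicity $\mu_\ell\coloneqq\dim_\C(\text{generalised }\mathrm{i}\beta_\ell\text{-eigenspace of }JB_3)$, whence
\[
\iMor{A(\delta)}-\iMor{A(T)}\;\equiv\;\sum_{\ell=1}^{r}\mu_\ell\;=\;2n-4-\tfrac{\nu}{2}\;\equiv\;\tfrac{\nu}{2}\qquad\mod 2,
\]
the identity $\sum_\ell\mu_\ell=2n-4-\nu/2$ coming from $\dim E_3=\nu+2\sum_\ell\mu_\ell$ (conjugation symmetry of the real matrix $JB_3$). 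Hence $\iMor{B_3}+d^{-}\equiv\nu/2\mod 2$. It remains to evaluate $d^{-}$, i.e.\ to analyse the crossing of $A$ at its degenerate starting point $s=0$, which is in general not regular; this is where the new formula of Section~3 enters. As $\mathrm{i}J$ is invertible and $\ker\big((\mathrm{i}JB_3)^{N}\big)=\ker\big((JB_3)^{N}\big)$ for every $N$, the generalised kernel of the pencil operator $(\mathrm{i}J)^{-1}B_3=\mathrm{i}JB_3$ is exactly $\ker(JB_3)^{4n-8}$, and the Section~3 formula computes the downward branch count at $s=0$ through the restriction of $B_3$ to this subspace, giving $d^{-}\equiv\iMor{B_3\big|_{\ker(JB_3)^{4n-8}}}+\nu/2\mod 2$. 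Substituting into $\iMor{B_3}+d^{-}\equiv\nu/2$, the two copies of $\nu/2$ cancel and $2n-4$ is even, so $\iMor{B_3}\equiv\iMor{B_3\big|_{\ker(JB_3)^{4n-8}}}\mod 2$. This proves the contrapositive of the theorem; the ``in particular'' statement, and with it the classical results of \cite{HS09} (non‑degenerate central configuration, so $d^{-}$ is absent) and the corrected form of \cite{BJP14} (no non‑trivial Jordan block at $0$), follow as in the first paragraph.

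\emph{Main obstacle.} Everything above except the evaluation of $d^{-}$ is routine bookkeeping, together with the elementary fact that the mod $2$ spectral flow across a crossing is the parity of the vanishing order of $\det A$ there. The crux — and the precise point at which \cite[Theorem 3.11]{BJP14} fails — is the degenerate, non‑regular crossing at $s=0$: when $JB_3$ has a non‑trivial Jordan block at $0$ the naive crossing form $v\mapsto\langle\mathrm{i}Jv,v\rangle$ on $\ker B_3$ is itself degenerate, so $d^{-}$ cannot be read off from it, and one must show instead that $d^{-}$ is still governed, modulo $2$, by $\iMor{B_3\big|_{\ker(JB_3)^{4n-8}}}$. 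Establishing this — the spectral‑flow formula for affine paths with degenerate starting point of Section~3 — is the technical heart of the argument.
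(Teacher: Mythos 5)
Your argument is correct and follows essentially the same route as the paper: you run the same affine path $s\mapsto B_3+s\,iJ$, handle the degenerate crossing at $s=0$ with the Section~3 formula (which, together with the standard fact that the Krein form $\langle iJ\cdot,\cdot\rangle$ has signature zero on the generalized kernel, gives exactly $d^{-}\equiv \iMor{B_3|_{\ker(JB_3)^{4n-8}}}+\nu/2$), and close with the same dimension count, so in effect you have reproved Theorem~\ref{prop:mainB} for $B_3$ and applied it on the Meyer--Schmidt block $E_3$, which is precisely the paper's proof. The only (minor) deviation is that at the strictly positive crossings you replace the Krein-signature/partial-signature computation of the local spectral flow by the elementary observation that $\sgn\det A(s)=(-1)^{\iMor{A(s)}}$ and that $\det A(s)$ vanishes at $\beta_\ell$ to order $\mu_\ell$, which yields the same mod~$2$ count.
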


It is worth noticing that the techniques developed in the present paper can be used for many other highly symmetric problems, for instance to prove instability results for relative equilibria
for $\SO(2p)$-invariant\footnote{For $p \ge 3$ the situation is clearly more involved because of the higher complexity of the special orthogonal group $\SO(2p)$,
and a corresponding symplectic decomposition has to be found.} simple Lagrangian systems in $\R^{2p}$ (the cases considered in the present paper corresponding to $p=1,2$)
for various interesting classes of potentials. In fact, the abstract techniques behind the main results of this paper rely only on the rotational invariance of the mechanical system, and indeed our main results offer a unified viewpoint for studying the stability of relative equilibria e.g. in the case of:
\begin{itemize}
\item[-]$\alpha$-homogeneous potentials (the Newtonian potential corresponding to the case $\alpha=1$), which are employed in different atomic models, and of the
\item[-] Lennard-Jones intermolecular potential, which is important in computational chemistry as well as in molecular modeling.
\end{itemize}

Another problem which could be dealt with with analogous methods is the so called $n$\textit{-vortex problem}. Also in this case, among all periodic orbits of particular importance are relative equilibria, 
which are rigidly rotating vortex configurations sometimes called vortex crystals. Such configurations can be characterized as critical points of the Hamiltonian function restricted to the constant angular impulse hypersurface in the phase space (topologically a pseudo-sphere whose coefficients are the circulation strengths of the vortices). From the perspective of the stability of relative equilibria, one difficulty is here 
represented by the fact that the $n$-vortex problem does not admit a Lagrangian formulation, and it is highly non-trivial to characterize the stability properties of such relative equilibria  in terms of the inertia indices of the possibly indefinite circulation matrix and of a suitable stability matrix. For further details, we refer to \cite{HPX20}.

We finish this introduction with a brief summary of the content of the paper: in Section 2 we introduce the rotating coordinates frame from (RE) in $\R^4$ and prove the symplectic decomposition of the phase space into invariant subspaces for the linearized Hamiltonian dynamics along a given (RE). In Section 3 we quickly recall the definition and basic 
properties of the spectral flow for paths of Hermitian matrices and prove a formula that allows to compute the spectral flow in case of affine paths with possibly degenerate 
starting point. In Section 4 we show how to apply the content of Section 3 to deduce information about the inertia indices of a symmetric matrix $A$ under the assumption that 
the spectrum of the Hamiltonian matrix $JA$ be purely imaginary, and in Section 5 we apply this to prove the main theorems of the paper. Finally, in the appendix we study in detail the integral 
manifolds for the Hamiltonian dynamics of the $n$-body problem.

\vspace{3mm}

\noindent \textbf{Acknowledgments.} Luca Asselle is partially supported by the DFG-grant 380257369 ``Morse theoretical methods in Hamiltonian dynamics''.  Li Wu is partially supported by the NSFC N. 12071255.

\section{Circular relative equilibria in $\R^4$}

In this section we describe the dynamical and geometrical
framework of the problem and introduce  a 
symplectic decomposition of the phase space which will enable us to rule out
the trivial Floquet multipliers produced by the integrals of motion.   
Our main reference are the beautiful lecture notes \cite{Moe14}.


\subsection{A rotating frame for circular (RE)}
\label{subsec:preliminaries-n-corpi}
Consider the Euclidean four dimensional space $\R^4$ endowed with the  Euclidean scalar product $\langle \cdot , \cdot \rangle$  and let $m_1, \ldots, m_n$, $n \ge 3$, be positive real numbers which can be thought of as  point masses of $n$ particles. 
For any position vector $q\= \trasp{(q_1, \ldots, q_n)}\in (\R^4)^n$, $q_i \in \R^4$  for every $i \in \{1, \ldots, n\}$, we can define the {\sc mass scalar product\/} and the {\sc mass norm\/}  in  $\R^{4n}$ as follows
\[
\mscal{\cdot}{\cdot}\= \langle M \cdot, \cdot \rangle \quad \textrm{ and } \quad \mnorm{\cdot}\= \langle M \cdot, \cdot \rangle^{1/2}
\]
where $M \in \Mat(4n,\R)$ is the diagonal {\sc mass matrix\/} $\diag(m_1 \Id_4, \ldots, m_n \Id_4)$,  $\Id_4$ being the $4 \times 4$ identity matrix. 

The invariance under translations of Newton's equation~\eqref{eq:Newton-cpt} implies that the center of mass of the $n$ particles has an inertial motion. For this reason there is no loss of generality 
in assuming that the center of mass lie at the origin. We define the {\sc configuration space with center of mass at the origin\/} as
 \[
 \X \= \Set{(q_1, \ldots, q_n) \in \R^{4n}| \sum_{i=1}^n m_i q_i=0}.
 \]
 It is readily seen that $\X $ is a $N$-dimensional (real) vector space, with $N\= 4(n-1)$. We define the space of {\sc collision free configurations\/} as   
\[
\widehat {\X }\=\Big \{q=(q_1, \ldots, q_n) \in \X	\ \Big | \ q_i \neq q_j \ \textrm{ for } i \neq j\Big \}= \X \setminus \Delta,
\]
where  
\[
\Delta \= \Big \{q=(q_1, \ldots, q_n) \in \R^{4n}\ |\ q_i = q_j \ \textrm{ for } i \neq j\Big \}
\]
is the  {\sc collision set\/}. In what follows, we denote the unit sphere (resp. with the collision set removed) in $\R^{4n}$ with respect to the mass scalar product   
by $\S$ (resp. by $\widehat\S$) and refer to it as the {\sc inertia ellipsoid \/} (resp. the {\sc collision free inertia ellipsoid\/}). 


As it is well known, Newton's equation~\eqref{eq:Newton-cpt} admits a Lagrangian (thus, an Hamiltonian) formulation. 
Denoting by    $T\X $ (resp. $T^*\X $) the tangent (resp. cotangent) bundle of $\X $,  the {\sc Lagrangian (function)\/} of the problem $L: T \widehat{\X } \to \R$ is defined by 
\begin{equation}\label{eq:Lagrangian}
	L(q,v)= \dfrac12 \langle M v, v\rangle + U(q),\qquad \forall (q,v)\in  T\widehat{\X } 
\end{equation}
The {\sc Hamiltonian function\/}  $H: T^*\widehat{\X }\to \R$ is given as follows 
\begin{equation}\label{eq:Hamiltonian}
	H(q,p)= pv -L(q,v)\big\vert_{v=M^{-1}p}=\dfrac12 \langle M^{-1}p, p\rangle  - U(q),\qquad \forall  (q,p)\in T^*\X, 
\end{equation}
where $p=Mv$ is the {\sc conjugate momentum\/}\footnote{Throughout in the paper we think of covectors as (column) vectors.}.
Hamilton equations of the $n$-body problem thus read
\begin{equation}\label{eq:Ham-system}
	\begin{cases}
		\dot q = \partial_p H(q,p) = M^{-1}p\\
		\dot p= -\partial_q H(q,p) = \nabla U(q) 
	\end{cases}
\end{equation}

Among all possible periodic motions of Newton's equation~\eqref{eq:Newton-cpt}, the simplest are provided by those motions in which certain configurations (among which are central configurations) are rigidly rotated (up to possibly changing the size of the configuration) with constant angular velocity about the center of mass. 

Roughly speaking, a  {\sc central configuration\/} $q$ is a configuration of the whole system at which there is a perfect  balance between the gravitational interacting forces and the position vector $q$. 
More precisely, a configuration vector $q$ is a central configuration if it solves the algebraic equation
\begin{equation}\label{eq:cc-eq-2}
	M^{-1}\nabla U(q) + \lambda q=0, \qquad \lambda= U(q)/\mnorm{q}^2.
\end{equation}

Even though Equation~\eqref{eq:cc-eq-2} is most likely impossible to solve explicitly, much can be said about its solutions. Indeed, a 
key feature of central configurations is the fact that they admit a variational characterization as critical points of the restriction $\widehat U$ of the Newtonian potential $U$ to the collision free inertia ellipsoid $\widehat{\mathbb S}$, thus allowing us e.g. to apply Morse theoretical methods in the study of central configurations. However, in doing so one has to keep in mind that central configurations 
are never isolated but rather always come in $\SO(4)$-families because of the $\SO(4)$-invariance of the potential $\widehat U$. 



\begin{dfn}\label{def:inertia-indices-cc} 
The {\sc Morse index\/} $\iMor{q}$ (resp. the {\sc Morse coindex\/} $\coiMor{q}$) of a central configuration $q$ is  the dimension of the negative (resp. positive)  spectral space of $H(q)$, the Hessian of $\widehat U$ at $q$. The non-negative integer $\nullity{q}\= \dim \ker H(q)$ is referred to as the {\sc nullity\/} of the central configuration $q$. We say that $q$ is {\sc (Morse-Bott) nondegenerate \/} if
	\[
	\nullity{q}=\dim T_q\big(\SO(4)\cdot q),
	\]
	i.e. if the nullity of $q$ is the least possible (namely, if the kernel of $H(q)$ does not contain anything which does not come from the symmetries of the problem).
	\end{dfn}

\begin{rem}
A
straightforward computation shows that 
\[
H(q):=[D^2 U(q )+ U(q)M]\big\vert_{T_q\widehat{\S }}.
\]
In the literature, in order to rule out the nullity due to the symmetries of the problem one usually works on the {\sc shape sphere\/}, namely the quotient of the collision free inertial ellipsoid by the group action.
In the planar case, namely the $n$-body problem in $\R^2$, this does not pose any additional difficulty as the $\SO(2)$-action is free, but things change in higher dimension. Indeed, as the action 
is not free (in fact, not even locally free) the quotient space is not a manifold but rather an Alexandrov space. \qed
\end{rem}

Among all periodic solutions of~\eqref{eq:Newton-cpt} which are generated by central configurations, in this paper we will be interested in the so-called circular relative equilibria, (RE) for short, which have the nice feature 
of becoming true equilibrium solutions after introducing rotating coordinates. 

Thus, let $q$ be a central configuration and let $k:=\sqrt{\lambda}$ where $\lambda$ is given by Equation~\eqref{eq:cc-eq-2}. We set
$$ \Theta (t)=\begin{bmatrix} \cos (kt) & -\sin (kt) \\ \sin (kt) & \cos (kt) \end{bmatrix}= \exp(ikt),$$
and define (real) $4 \times 4$-matrices in $\SO(4)$ by 
\begin{equation}\label{eq:R-S}
r_s(t):= \diag\big(\Theta (t), \Id\big),  \quad  \quad r_d(t):= \diag\big(\Theta (t), \Theta(-t)\big).
\end{equation}
Clearly, $r_s(t)=\exp(k_s t)$ and $r_d(t)=\exp(k_d t)$, where $k_s$ and $k_d$ are the $4\times 4$ skew-symmetric matrices defined respectively by 
\begin{equation}\label{eq:K-S}
k_s= \diag(ki, 0),  \quad  \quad k_d= \diag(ki, -ki).
\end{equation}
 We denote by $R_s(t)$ and $R_d(t)$  the $4n \times 4n$-block diagonal matrices whose (diagonal) entries
are the $4 \times 4$ matrices $r_s(t)$ and $r_d(t)$, and by $K_s$ and $K_d$ the $4n \times 4n$-block diagonal matrices whose (diagonal) entries
are the $4 \times 4$ matrices $k_s$ and $k_d$. 

Given a central configuration $q$ we define the associated circular relative equilibrium (RE) by 
\begin{equation}\label{eq:re}
q(t) \=R_d(t)\, q, \qquad  t \in \R.
\end{equation}

In rotating coordinates
\[
\begin{cases}
Q:= R_d(t) \,q\\ 
P:= R_d(t)\, p,
\end{cases}
\] 
such a (RE) becomes a true stationary solution of Hamilton's equations. The Hamiltonian function in these coordinates reads
\[
H(Q,P)=\dfrac12 \langle MP, P\rangle + \langle  K_dQ,P\rangle - U(Q),
\]
and thus Hamilton's equations are given by
\begin{equation}\label{eq:Ham-system-new}
\begin{cases}
	\dot Q= M^{-1} P + K_d Q\\
	\dot{P}= \nabla U(Q)+ K_d P
\end{cases}	
\end{equation}
By  linearizing  the Hamiltonian system \eqref{eq:Ham-system-new} we obtain the linear Hamiltonian system 
\begin{equation}
\dot z = L_d \, z = - J B_d \, z,
\label{eq:Ham-lin}
\end{equation}
where 
\begin{equation}
\label{eq:Ham-lin-2}
\quad L_d\= \begin{pmatrix}
	K_d & M^{-1}\\
	D^2 U(Q) & K_d
\end{pmatrix}, \qquad  B_d\,\= \begin{pmatrix}
 	-D^2 U(Q) & \trasp{K_d}\\
 	K_d & M^{-1}
 \end{pmatrix}.
\end{equation}
\begin{rem}
If the central configuration $q$ satisfies $q_j\in \R^2\times \{0\}\subset \R^4$ for all $j=1,...,n$, namely if $q$ is planar and contained in the $(x,y)$-plane, then the corresponding (RE) can be equivalently 
written as $q(t)=R_s(t)\, q$. We take advantage of this fact by introducing a different rotating frame for the corresponding (RE) in which we consider rotations 
only in the $(x,y)$-plane. More precisely, introducing rotating coordinates 
$$\begin{cases}
Q:= R_s(t) \,q\\ 
P:= R_s(t)\, p,
\end{cases}$$
yields after linearization again to a linear Hamiltonian system as in~\eqref{eq:Ham-lin} where the matrices
\begin{equation}\label{eq:Ham-lin-pl-2}
\quad L_s\= \begin{pmatrix}
	K_s & M^{-1}\\
	D^2 U(Q) & K_s
\end{pmatrix}, \qquad  B_s\,\= \begin{pmatrix}
 	-D^2 U(Q) & \trasp{K_s}\\
 	K_s & M^{-1}
 \end{pmatrix}
\end{equation}
replace the matrices $L_d$ and $B_d$ given by~\eqref{eq:Ham-lin-2} respectively. \qed
\end{rem}

\begin{rem}
The disadvantage of introducing rotating coordinates is that in the new coordinates the Hamiltonian is not natural (i.e. of the form kinetic plus potential energy) but rather we have an additional term appearing
(usually referred to as a \textit{Coriolis term}) which roughly speaking measures how fast the frame is rotating. However, in the rotating frame the Hamiltonian system becomes autonomous and hence, at least in principle, the monodromy matrix (thus the Floquet multipliers) can be explicitly computed. \qed 
\end{rem}


\subsection{A symplectic decomposition into invariant subspaces}\label{subsec:decomposition}

The goal of this section is to provide a symplectic decomposition of the phase space into subspaces which are invariant by the phase flow of the linear Hamiltonian system~\eqref{eq:Ham-lin}.
In such a way we will be able to rule out the uninteresting part of the dynamics corresponding to the symmetries of the $n$-body problem.
More precisely, we will construct a decomposition  $T^*(\R^{4n})\cong \R^{8n}$ as the direct sum of three invariant linear symplectic subspaces $E_1, E_2$ and $E_3$: The subspace $E_1$ will reflect the translational invariance of the problem, whereas the subspace $E_2$ the invariance under rotations. Finally, the subspace $E_3$ will be obtained as the symplectic orthogonal complement of $E_1\oplus E_2$.

Such a decomposition is inspired by Meyer and Schmidt's work \cite{MS05}, but unlike in the Meyer and Schmidt's case will depend on the fact that the (RE) be planar or not as well as on the 
fact that the central configuration generating the (RE) be planar or not. 

Without loss of generality we suppose hereafter that 
$$\sum_{i=1}^n m_i = 1.$$

\noindent \textbf{The subspace $E_1$.} We start to define the linear subspace $E_1 \subset \R^{8n}$ corresponding to the conservation of the center of mass and linear momentum and thus reflecting the translational invariance of the $n$-body problem. 
The construction of $E_1$ is independent on the (RE) being planar or not as well as on the generating central configuration be planar or not. In what follows the matrices $L_d$ and $L_s$ defined respectively in~\eqref{eq:Ham-lin-2} and in~\eqref{eq:Ham-lin-pl-2} (according to the fact that the (RE) be planar or not) will be denoted with $L_*$ whenever we don't need to distinguish them. 
We thus set $E_1\subset  \R^{8n}$ to be the $8$-dimensional subspace given by
 \[
E_1\=\Big \{(v, \ldots, v, m_1 w, \ldots, m_n w)\in T^*\R^{4n}\ \Big |\   v,w \in \R^4\Big \}
\]
We denote by $\{e_l\}$ the standard basis of $\R^4$ and set $\ve_l=(e_l, \ldots, e_l)\in \R^{4n}$, $l=1,...,4$. A basis of $E_1$ is then given by
\begin{equation}
 u_l=\trasp{(e_l, \ldots , e_l, 0, \ldots ,0)}= \trasp{(\ve_l, \zero)} 
\quad \textrm{ and } \quad 
v_l =\trasp{(0,\ldots ,0, m_1 e_l, \ldots, m_n e_l)}=\trasp{(\zero , M \ve_l)}
\end{equation}

\begin{lem}\label{thm:lemma1}
$E_1$ is an $L_*$-invariant symplectic subspace of $(\R^{8n}, \omega_0)$. Furthermore, the restriction  $L_1^d$ of $L_d$ to $E_1$ is represented with respect to the basis $\{u_1,u_2,v_1,v_2,u_3,u_4,v_3,v_4\}$ by
\begin{equation}\label{eq:L1}
L_1^d=\displaystyle L_d|_{E_1}
 = \begin{pmatrix}
 	k i & \Id & 0 & 0 \\
 	0 & -ki & 0 & 0\\
 	0 & 0 & ki& \Id\\
 	0 & 0 & 0 & -ki
 \end{pmatrix},
\end{equation}
whereas the restriction $L_1^s$ of $L_s$ to $E_1$ is represented by 
\begin{equation}\label{eq:L1s}
L_1^s:\displaystyle L_s|_{E_1}
 = \begin{pmatrix}
 	k i & \Id & 0 & 0 \\
 	0 & -ki & 0 & 0\\
 	0 & 0 & 0& \Id\\
 	0 & 0 & 0 & 0
 \end{pmatrix}.
 \end{equation}
\end{lem}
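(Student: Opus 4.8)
The plan is to verify directly that $E_1$ is invariant under $L_*$ by computing how the generators $u_l, v_l$ transform, and then to read off the matrix of the restriction in the chosen basis. First I would recall the explicit form of $L_d$ from~\eqref{eq:Ham-lin-2}: it is the $8n\times 8n$ block matrix with $K_d$ in the diagonal blocks, $M^{-1}$ in the upper right, and $D^2U(Q)$ in the lower left. The key observation is that the vectors $\ve_l = (e_l,\dots,e_l)$ span the translational directions, on which $D^2U(Q)$ vanishes --- indeed $U$ is translation invariant, so $D^2U(Q)\ve_l = 0$ for all $l$. Likewise $M^{-1}(M\ve_l) = \ve_l$ and $M(\ve_l)$ is exactly the momentum-type vector appearing in $v_l$. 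So the only genuine computation is the action of $K_d$ (resp.~$K_s$) on $\ve_l$: since $K_d$ is block diagonal with blocks $k_d = \diag(ki,-ki)$, we get $K_d\ve_1 = k\,\ve_2$, $K_d\ve_2 = -k\,\ve_1$ in the first $\R^2$ factor (using the identification of $ki$ acting on $\R^2$ with the standard rotation generator), $K_d\ve_3 = -k\,\ve_4$, $K_d\ve_4 = k\,\ve_3$ in the second factor; and for $K_s$ the blocks are $\diag(ki,0)$ so $K_s\ve_3 = K_s\ve_4 = 0$ while the action on $\ve_1,\ve_2$ is unchanged.

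Next I would assemble these: applying $L_d$ to $u_l = \trasp{(\ve_l,\zero)}$ gives $\trasp{(K_d\ve_l,\ D^2U(Q)\ve_l)} = \trasp{(K_d\ve_l,\zero)}$, which stays in $E_1$; applying $L_d$ to $v_l = \trasp{(\zero,M\ve_l)}$ gives $\trasp{(M^{-1}M\ve_l,\ \trasp{K_d}M\ve_l)} = \trasp{(\ve_l,\ \trasp{K_d}M\ve_l)}$. One checks $\trasp{K_d}M\ve_l = M\,\trasp{K_d}\ve_l = -M K_d\ve_l$ using that $M$ is a scalar-on-blocks matrix commuting with $K_d$ and that $K_d$ is skew; hence this too lies in $E_1$. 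This establishes $L_d$-invariance (and the same bookkeeping with $K_s$ for $L_s$). Collecting coefficients in the ordered basis $\{u_1,u_2,v_1,v_2,u_3,u_4,v_3,v_4\}$ and writing the $2\times2$ blocks in terms of the rotation generator $i$ and the identity $\Id$ yields precisely the matrices~\eqref{eq:L1} and~\eqref{eq:L1s}; the only difference between the $s$ and $d$ cases is that the lower-right $2\times 2$ block $-ki$ is replaced by $0$, reflecting the absence of rotation in the second plane.

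Finally, for the symplectic (i.e.~nondegeneracy) claim: with the standard symplectic form $\omega_0$ on $\R^{8n} = T^*\R^{4n}$ one computes the pairings of the generators, finding $\omega_0(u_l,v_m) = \langle M\ve_m,\ve_l\rangle = \delta_{lm}\sum_i m_i = \delta_{lm}$ (using the normalization $\sum_i m_i = 1$), $\omega_0(u_l,u_m) = 0$, $\omega_0(v_l,v_m) = 0$. Hence $\{u_l,v_l\}$ is a symplectic basis of $E_1$, so $\omega_0|_{E_1}$ is nondegenerate and $E_1$ is a symplectic subspace. I expect no serious obstacle here --- the whole proof is a direct verification --- the one point requiring mild care is tracking the sign conventions and the identification of the $2\times2$ rotation generator with the symbol $ki$ consistently between $K_d$, its transpose, and the action on the $\ve_l$, so that the off-diagonal $\pm\Id$ entries and the $\pm ki$ diagonal entries come out with exactly the signs displayed in~\eqref{eq:L1}.
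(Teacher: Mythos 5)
Your proposal takes the same route as the paper's proof: verify the symplectic pairings of the generators directly (using $\sum_i m_i=1$), kill $D^2U(Q)\ve_l$ by translation invariance, and reduce the whole computation to the action of $K_d$ resp.\ $K_s$ on the vectors $\ve_l$; the pairing computations and the values $K_d\ve_1=k\ve_2$, $K_d\ve_2=-k\ve_1$, $K_d\ve_3=-k\ve_4$, $K_d\ve_4=k\ve_3$, $K_s\ve_3=K_s\ve_4=0$ are all correct, and the invariance and symplectic claims are established exactly as in the paper.

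The one concrete slip is in the action of $L_d$ on $v_l$: by \eqref{eq:Ham-lin-2} the lower-right block of $L_d$ is $K_d$, not $\trasp{K_d}$ (the transpose enters only the symmetric matrix $B_d=JL_d$), so the correct computation is $L_d v_l=\trasp{(\ve_l,\,K_dM\ve_l)}=u_l+\trasp{(\zero,\,MK_d\ve_l)}$, which is what the paper's proof does, whereas your use of $\trasp{K_d}=-K_d$ produces $u_l-\trasp{(\zero,\,MK_d\ve_l)}$. This is harmless for the invariance of $E_1$ (either sign lands you back in $E_1$) and for the qualitative conclusions (purely imaginary eigenvalues $\pm ik$, resp.\ also $0$ for $L_1^s$, with non-trivial Jordan blocks coming from the $\Id$ coupling), but it flips the signs of the diagonal blocks in the $v$-rows, so your claim that the coefficients come out ``exactly as displayed'' needs rechecking. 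Be aware, when you do so, that the signs of the diagonal blocks in the displayed matrix \eqref{eq:L1} do not match what the computation $L_dv_l=u_l+\trasp{(\zero,\,MK_d\ve_l)}$ actually yields (namely $+ki$ on the $(v_1,v_2)$-block and $-ki$ on the $(u_3,u_4)$- and $(v_3,v_4)$-blocks); the discrepancy you would encounter is with the displayed signs, not with the substance of the lemma, which only uses the block-triangular structure and the $\pm ik$ (and $0$) diagonal entries.
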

\begin{proof}
By a direct computation, we get that  
\begin{align}
\omega_0(u_l, u_m) &=\omega_0(v_l,v_m)=0, \quad \forall l,m,\\ 
\omega_0(u_l, v_l) &=\langle J u_l, v_l\rangle= \langle M \ve_l, \ve_l\rangle=1, \quad \forall l, \\
\omega_0(u_l, v_m) &=0, \quad \forall l \neq m, 
\end{align}
thus showing that $\{u_1,...,u_4,v_1,...,v_4\}$ is a symplectic basis of $E_1$.

Now, since
\begin{equation}
L_d=\begin{pmatrix}
K_d & M^{-1}\\
D^2 U(q ) & K_d	
\end{pmatrix}, \qquad  K_d= k \diag \big ((i,-i),...,(i,-i)\big ) = k \diag(\vi, \ldots,  \vi)
\end{equation}
we easily compute
\begin{equation}\label{eq:restrizione-L-E1} 
\begin{aligned}
L_d\begin{pmatrix}\ve_l \\  0 
\end{pmatrix} &= \begin{pmatrix}
K_d \ve_l \\  D^2 U(q ) \ve_l
\end{pmatrix}= \begin{pmatrix}
K_d \ve_l \\  0
\end{pmatrix}
 \\
L_d\begin{pmatrix}
0\\  M \ve_l
\end{pmatrix} &= \begin{pmatrix}
	 \ve_l \\ K_d M \ve_l 
\end{pmatrix}= \begin{pmatrix}
 \ve_l \\ 0 
\end{pmatrix}+ \begin{pmatrix} 0\\  M K_d \ve_l 
\end{pmatrix} = u_l +  \begin{pmatrix} 0\\  M K_d \ve_l 
\end{pmatrix}
\end{aligned}
\end{equation}
where $D^2 U(q ) \ve_l=0$ follows by differentiating and evaluating at $t=0$ the identity 
$$\nabla U(q+t\ve_l)=\nabla U(q).$$
The $L_d$-invariance of $E_1$ follows observing that $K_d \ve_l$ is again a vector of the form $\pm k \ve_j$. 
The $L_s$-invariance of $E_1$ is proven in a completely analogous fashion, and to obtain the corresponding matrix representation one just needs to notice that $K_s \ve_l=0$ for $l=3,4$.
\end{proof}

\begin{rem}
As we readily see from the lemma above, $L_1^*$ has a non-trivial Jordan block structure (roughly speaking, such Jordan blocks are associated with the drift that one gets when allowing non-zero total momentum)
and this leads to linear instability. The idea is therefore to exclude the subspace $E_1$ when looking at the stability properties of (RE), thus ruling out an uninteresting part of the dynamics. \qed
\end{rem}


\vspace{3mm}

\noindent \textbf{The subspace $E_2$.} The second invariant subspace $E_2$ reflects the rotational and scaling invariance of the $n$-body problem, but unlike the subspace $E_1$ it will strongly depend on the fact that 
the (RE) be planar or not. 
Before defining $E_2$ we shall recall some basic facts about the special orthogonal group $\SO(4)$. 
%
%
%
From an algebraic viewpoint $\SO(4)$ is a non-commutative compact $6$-dimensional Lie group which is commonly identified with the group 
of real $4\times 4$ orthogonal matrices with determinant $1$, namely with the group of rotations in $\R^4$. In $\R^4$ we have two types of rotations: 
\begin{itemize}
	\item[(s)] {\sc simple rotations}, which leave a linear plane $\Pi$ fixed. If we identify $\Pi$ with the $\{0\}\times \R^2$-plane, then a simple rotation of angle $\varphi$, takes the form $\diag(\Phi, \Id)$ where 
	$$\Phi=\begin{bmatrix}
	\cos \varphi & -\sin \varphi\\
	\sin \varphi & \cos \varphi
\end{bmatrix}.$$   
	\item[(d)] {\sc double rotations}, which fix only the origin. In this case  there exists a pair of orthogonal planes $\Pi_1$ and $\Pi_2$ each of which is invariant. Hence, such a double rotation produces an usual planar rotation on each of the planes $\Pi_1$ and $\Pi_2$. If the rotation angles of such a double rotation on $\Pi_1$ and $\Pi_2$ coincide, then there are
infinitely many invariant planes. In this case we say that the double rotation is {\sc isoclinic\/}.
\end{itemize}
We have two types of isoclinic rotations in $\R^4$ which are specified by the same angle $\varphi$, namely: 
\begin{itemize}
	\item {\sc left-isoclinic rotations\/}, if the signs of the rotations are opposite. Such a class of rotations contains $\diag(\Phi,- \Phi)$ and $\diag(-\Phi,\Phi)$,
	\item {\sc right-isoclinic rotations\/}, if the signs of the rotations are equal. Such a class of rotations contains $\diag(\Phi, \Phi)$ and $\diag(-\Phi,-\Phi)$.
\end{itemize} 
The well-known {\sc isoclinic decomposition\/}, also known as {\sc  Van Elfrinkho's formula\/}, states that, up to a central inversion, every 
4D rotation is the product of a left-isoclinic and a right-isoclinic rotation. 

Left- (resp right-)isoclinic rotations form a non-commutative subgroup $S^3_L$ (resp. $S^3_R$) of $\SO(4)$ which is isomorphic to the multiplicative group $\S^3$ of unit quaternions. It can be shown that $S^3_L \times S^3_R$ is the universal covering of $\SO(4)$, and that $S^3_L$ and $S^3_R$ are normal subgroups of $\SO(4)$. 

Moreover, left- and right-isoclinic rotations can be described in terms of quaternions as we now recall. 
 Quaternions are generally represented in the form 
\[
Q= a+ b \vi + c \vj+ d \vk
\]
where $a,b,c$ and $ d$ are real numbers and $\vi, \vj$ and $\vk$ are the {\sc quaternion units\/}. We have
\[
\vi^2=\vj^2=\vk^2=\vi\vj\vk=-1.
\]
The quaternion units can be represented as $2 \times 2$-complex matrices as follows 
\begin{equation}
\vi= \begin{pmatrix}
	i & 0 \\ 0 & -i
\end{pmatrix}, \qquad \vj= \begin{pmatrix}
	0 & 1 \\ -1 & 0
\end{pmatrix},\qquad 
\vk= \begin{pmatrix}
	0& i  \\ i&0
\end{pmatrix}
\end{equation}
 In real notation, $\vi, \vj, \vk$ can be written, respectively, as the following real matrices
\begin{equation}
\vi= \begin{pmatrix}
	0&-1 & 0 &0\\  1 & 0& 0 & 0 \\
	0 & 0 & 0 & 1 \\ 0 & 0& -1 & 0
\end{pmatrix}, \qquad 
\vj= \begin{pmatrix}
	0&0 & 1 &0\\  0 & 0& 0 & 1 \\
	 -1 & 0 &0 & 0 \\  0 & -1 & 0&0 \
\end{pmatrix}, \qquad
\vk= \begin{pmatrix}
	0&0 & 0 &-1\\  0 & 0& 1 & 0 \\
	0 & -1 & 0 & 0 \\ 1 & 0& 0 & 0
\end{pmatrix}
\end{equation}
A point $(x,y,z,t)\in \R^4$ can be represented by the quaternion $P=x+ y\vi+z\vj+t\vk$, and it is straightforward to check that left-isoclinic (resp. right-isoclinic) rotations correspond to the left (resp. right) multiplication by a unit quaternion. 

We can now finally proceed with the definition of the subspace $E_2$. 
We start considering the case in which the (RE) is non-planar (independently of the fact that the central configuration generating it be planar or not). Thus, let $q$ be a central configuration
generating a non-planar (RE). We define $8$ linearly independent vectors as follows 
\begin{align*}
 z_1& = \trasp{(q, \zero)}, \qquad 
 z_2=\trasp{(\vi\, q , \zero)}, \qquad 
 z_3= \trasp{(\vj \,q ,\zero)}, \qquad 
 z_4= \trasp{(\vk \,q,   \zero)}\qquad \\
 w_1& =\trasp{(\zero, Mq  )}, \qquad 
 w_2= \trasp{(\zero, \vi\, Mq )}, \qquad 
 w_3= \trasp{(\zero, \vj\, M q)}, \qquad 
 w_4= \trasp{(\zero, \vk\, Mq)},  
\end{align*}
where with slight abuse of notation $\vi \cdot $, $\vj \cdot$, $\vk \cdot$ denotes the diagonal left-multiplication in $\R^{4n}$ of the quaternion $\vi,\vj,\vk$ respectively, and set 
\[
E_2^{d}=\mathrm{Span}\, \big \{ z_1,z_2, w_1, w_2,  z_3, z_4, w_3, w_4\big\}.
\]

\begin{lem}\label{thm:lemma2}
Let $q$ be a central configuration generating a non-planar (RE). Then $E_2^{d}$ is an $L_d$-invariant symplectic subspace of  $(\R^{8n}, \omega_0)$. Furthermore, the restriction of $L_d$ to $E_2^d$ is represented by the $8\times 8$-matrix 
\begin{equation}\label{eq:E2n}
  L_2^d\=L|_{E_2^d}
 = \begin{pmatrix}
 	k i & \Id & 0 & 0 \\
 	A & ki& 0 & 0\\
 	0 & 0 & ki& \Id\\
 	0 & 0 & B & ki
 \end{pmatrix},\end{equation}
 where 
 $$A=\begin{pmatrix}
	2k^2 & 0\\ 0 & -k^2
\end{pmatrix}, \quad
B=\begin{pmatrix}
	-k^2 & 0\\ 0 & -k^2
\end{pmatrix}.
$$
\end{lem}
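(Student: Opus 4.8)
The plan is to prove the two assertions — $L_d$-invariance together with the matrix representation~\eqref{eq:E2n}, and the symplectic property — separately, each reducing to a few structural identities for the central configuration $q$ combined with the quaternion multiplication table. It is convenient to write $\vi,\vj,\vk$ also for the $4n\times 4n$ block-diagonal matrices acting on $\R^{4n}=(\R^4)^n$ by the corresponding real $4\times4$ matrix on each factor; the relevant facts are that these matrices are orthogonal and skew-symmetric, commute with $M=\diag(m_1\Id,\dots,m_n\Id)$ (hence with $M^{1/2}$ and $M^{-1}$ as well), satisfy $\vi^2=\vj^2=\vk^2=-\Id$, $\vi\vj=\vk=-\vj\vi$, $\vj\vk=\vi=-\vk\vj$, $\vk\vi=\vj=-\vi\vk$, and that $K_d=k\,\vi$ in this notation.

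First I would record three identities for $q$. (a) The central configuration equation~\eqref{eq:cc-eq-2}, with $\lambda=k^2$, reads $\nabla U(q)=-k^2Mq$. (b) Since $U$ is positively homogeneous of degree $-1$, its gradient is homogeneous of degree $-2$; Euler's identity (equivalently, differentiating $\nabla U(tq)=t^{-2}\nabla U(q)$ at $t=1$) then gives $D^2U(q)\,q=-2\nabla U(q)=2k^2Mq$. (c) For any skew-symmetric $4\times4$ matrix $S$, the $\O(4)$-invariance of $U$ under the diagonal action yields the equivariance $\nabla U(e^{tS}q)=e^{tS}\nabla U(q)$; differentiating at $t=0$ and inserting (a), $D^2U(q)(Sq)=S\,\nabla U(q)=-k^2MSq$, which applies in particular to $S\in\{\vi,\vj,\vk\}$. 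It is worth noting already that (b) is the source of the exceptional entry $2k^2$ in the block $A$ (the direction $z_1$ is the scaling direction), while (c) produces the $-k^2$ entries in $A$ and $B$.

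Next I would apply $L_d$, as given in~\eqref{eq:Ham-lin-2}, to each of the eight spanning vectors, using $K_d=k\vi$ and (a)--(c). A typical position vector: $L_dz_2=\trasp{(k\vi^2 q,\,D^2U(q)(\vi q))}=\trasp{(-kq,\,-k^2M\vi q)}=-k z_1-k^2 w_2$. A typical momentum vector: since $z_\mu$ has vanishing momentum component and $w_\mu$ vanishing position component, $L_dw_1=\trasp{(M^{-1}Mq,\,K_dMq)}=\trasp{(q,\,kM\vi q)}=z_1+k w_2$. The remaining six vectors are handled identically, the quaternion relations turning $\vi$ applied to $\vj q$ into $\vk q$, and so on. Collecting all coefficients in the ordered basis $\{z_1,z_2,w_1,w_2,z_3,z_4,w_3,w_4\}$ reproduces precisely the matrix $L_2^d$ in~\eqref{eq:E2n} and establishes $L_d(E_2^d)\subseteq E_2^d$. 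This is the only place where care is genuinely required — keeping the quaternion products, the signs, and the ordering of the basis consistent — but it presents no conceptual obstacle.

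Finally, for the symplectic claim I would use that, by the proof of Lemma~\ref{thm:lemma1}, $\omega_0(v,w)=\langle Jv,w\rangle$ with $J\trasp{(a,b)}=\trasp{(-b,a)}$, so that $\omega_0(\trasp{(a,b)},\trasp{(c,d)})=\langle a,d\rangle-\langle b,c\rangle$. Since the momentum slots of the $z_\mu$ and the position slots of the $w_\mu$ vanish, $\omega_0(z_\mu,z_\nu)=\omega_0(w_\mu,w_\nu)=0$ for all $\mu,\nu$. For the mixed pairings, writing $X_1=\Id$, $X_2=\vi$, $X_3=\vj$, $X_4=\vk$ and using orthogonality of $X_\mu$ together with $[X_\mu,M]=0$, one gets $\omega_0(z_\mu,w_\nu)=\langle X_\mu q,\,MX_\nu q\rangle=\langle M^{1/2}q,\,X_\mu^{-1}X_\nu M^{1/2}q\rangle$, which equals $\mnorm{q}^2=1$ for $\mu=\nu$ and vanishes for $\mu\ne\nu$ because then $X_\mu^{-1}X_\nu\in\{\pm\vi,\pm\vj,\pm\vk\}$ is skew-symmetric. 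Hence $\{z_1,z_2,z_3,z_4,w_1,w_2,w_3,w_4\}$ is a symplectic basis; in particular the eight vectors are automatically linearly independent — a linear relation among them would make $\omega_0$ degenerate on their span — so $\dim E_2^d=8$ and $E_2^d$ is a symplectic subspace, which completes the proof.
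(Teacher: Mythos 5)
Your proposal is correct and takes essentially the same route as the paper's proof: establish $D^2U(q)q=2k^2Mq$ from homogeneity and $D^2U(q)(Sq)=-k^2MSq$ for skew-symmetric $S$ (the paper gets the latter by differentiating along the curve of rotated central configurations, which is the same fact), then apply $L_d$ to the eight spanning vectors and check the $\omega_0$-pairings directly. You additionally write out the symplectic-basis verification that the paper dismisses as ``straightforward to check,'' and your coefficients (e.g.\ $L_dz_1=kz_2+2k^2w_1$) are the ones consistent with the matrix in \eqref{eq:E2n}, whereas the paper's displayed $-kz_2$ in that line is a sign typo.
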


\begin{proof}
It is straightforward to check that $E_s^d$ is a symplectic subspace.
To show that $E_2^d$ is $L_d$-invariant 
we start computing
$$L_d z_1=\binom{k \vi q}{D^2 U(q ) [q]} =k \binom{\vi q}{\zero} + \binom{\zero}{D^2 U(q )[q]} = -k z_2+2k^2 w_1,$$
where we used the fact that $\nabla U$ is $(-2)$-homogeneous and the fact that $q$ is a central configuration to infer that $D^2U(q)[q]=-2\nabla U(q) = 2k^2 Mq$. Similarly, we compute 
$$L_d z_2=\binom{k \vi\vi  q}{D^2 U(q )[\vi q ]}= -k \binom{q}{\zero}+ \binom{\zero}{-k^2 M\vi q}= -k z_1-k^2w_2,$$
where to obtain the last equality we have differentiated the identity\footnote{This follows from the fact that $\left (\begin{matrix} e^{it} & 0 \\ 0 & e^{-it}\end{matrix}\right ) \cdot q$ is a central configuration for every $t\in \R$.}
$$\nabla U \left ( \left (\begin{matrix} e^{it} & 0 \\ 0 & e^{-it}\end{matrix}\right ) \cdot q \right ) = - k^2 M  \left (\begin{matrix} e^{it} & 0 \\ 0 & e^{-it}\end{matrix}\right ) \cdot q$$ 
and evaluated it at $t=0$. In a similar way we compute

\begin{equation}
\begin{aligned}
&L_d  z_3=\begin{pmatrix}
k \vi\vj  q \\
D^2 U(q )[\vj q ] 
\end{pmatrix}= k \begin{pmatrix}
\vk q \\
\zero
\end{pmatrix}-k^2 \begin{pmatrix}
\zero \\
M\vj  q  
\end{pmatrix}= k z_4-k^2 w_3\\
&L_d z_4=\begin{pmatrix}
k \vi\vk  q \\
D^2 U(q )[\vk q ] 
\end{pmatrix}= - k z_3-k^2 w_4\\
&L_d  w_1=
\begin{pmatrix}
 q \\
kM\vi q   
\end{pmatrix}= z_1+k w_2\\ 
&L_d w_2=
\begin{pmatrix}
 \vi q \\
kM\vi^2 q   
\end{pmatrix}= z_2-k w_1\\
&L_d w_3=\begin{pmatrix}
 \vj q \\
kM\vi\vj q   
\end{pmatrix}= z_3+k w_4\\
&L_d w_4=\begin{pmatrix}
 \vk q \\
kM\vi\vk q   
\end{pmatrix}= z_4-k w_3\\
\end{aligned}
\end{equation}
This completes the proof. 
\end{proof}

\begin{rem} An equivalent definition of $E_2^d$ is given as follows:
$$E_2^d := \Big \{ \big ( \alpha \cdot (\mathfrak A q,...,\mathfrak A q), \beta \cdot(\mathfrak B Mq, ... \mathfrak B Mq) \ \Big |\ \alpha,\beta \in \R, \ \mathfrak A,\mathfrak B \in SU(2)\Big \}.$$
This should be compared with the definition given in \cite[Page 259]{MS05} for the planar $n$-body problem. In fact, if one wanted to naively generalize Meyer and Schmidt's construction to $\R^4$ then 
one would be tempted to define the subspace $E_2^d$ as follows 
$$\Big \{ \big ( \alpha \cdot (\mathfrak A q,...,\mathfrak A q), \beta \cdot(\mathfrak B Mq, ... \mathfrak B Mq) \ \Big |\ \alpha,\beta \in \R, \ \mathfrak A,\mathfrak B \in SO(4)\Big \}.$$
However, this unfortunately does not lead to an $L_d$-invariant subspace. Consider for instance the vector 
$$z_5:=\trasp{ (q \vi,\zero)},$$
where as usual one identifies right-isoclinic rotations in $\SO(4)$ with the right-multiplication by quaternions. Computing
\begin{align*}
L_d z_5 &= \binom{K_d q \vi}{\zero} + \binom{\zero}{D^2U(q)[q \vi]} = k \binom{\vi q \vi}{\zero} - k^2 \binom{\zero}{M q \vi},\\
L_d \binom{\vi q \vi}{\zero} &=-k \binom{q \vi}{\zero} + \binom{\zero}{D^2U(q)[\vi q \vi]},
\end{align*}
we see that $D^2U(q)[\vi q \vi]$ cannot be computed. Indeed, one easily checks that $\vi q \vi$ cannot be written as $\mathfrak a q$ for some $\mathfrak a \in \mathfrak s \mathfrak o(4)$ (we leave 
the easy proof of this fact to the reader). 

In the planar case, Meyer and Schmidt's definition still gives rise to an invariant subspace since $\SO(2) =SU(1)$. Another reason why one cannot use $\SO(4)$ instead of $SU(2)$ in our 
setting is the following: if this was the case, then the space $E_2^d$ would have dimension 14, and this would not allow us to find the symplectic change of coordinates given in Proposition~\ref{thm:new-coordinates}
since 14 is not an integer multiple of 4.
 \qed
\end{rem}

We now move on to investigate the case of a planar (RE) $\gamma$. In this case, the central configuration generating $\gamma$ might be collinear or planar non-collinear. 
We start by considering the latter case. Thus, let $q=\trasp{(x,y,0,0)}$ be a non-collinear central configuration contained in the plane $\R^2\times \{0\}\subset \R^4$. 
The corresponding (RE) $\gamma$ is pointwise defined by $\gamma(t)= R_s(t) q$ where $R_s(t)$ is as defined in Section~\ref{subsec:preliminaries-n-corpi}, and the Hamiltonian matrix $L_s$ is given by 
\[
L= \begin{bmatrix}
	K_s &  M^{-1}\\
	D^2 U(q) & K_s
\end{bmatrix}
\]
where 
$$ K_s=k \diag \big ((i,0),...,(i,0)\big ).$$
For $1 \le l < r \le 4$, we set $k_{lr} \in \Mat(4 \times 4, \R)$ by 
\[
(k_{lr})_{ij}= \begin{cases}
 -1 & (i,j)=(l,r)\\
 1 & (i,j)= (r,l)\\
 0 & \textrm{ otherwise} 	
 \end{cases}
\]
By using this notation, the six simple rotations of $\SO(4)$ are the following $k_{12}, k_{13}, k_{14}, k_{23}, k_{24}, k_{34}$, and it is straightforward to check that 
\begin{align}
& \vi= k_{12}-k_{34} && \vj= -k_{13}-k_{34} && \vk= k_{14}- k_{23}	
\end{align}
We denote with $K_{lr}$ the $4n\times 4n$-dimensional block diagonal matrix defined by 
\[
K_{lr}= \diag\big(k_{lr}, \ldots, k_{lr}\big).
\]
Under the notation above (and noticing that $K_{34} q=q$) we set
\begin{align}
& z_1= \trasp{(q,\zero)},&& z_2= \trasp{(K_{12}q,\zero)}, && z_3= \trasp{(K_{13}q,\zero)}, &&z_4= \trasp{(K_{14}q,\zero)},  \\	
& z_5=\trasp{(K_{23}q,\zero)}, &&  z_6=\trasp{(K_{24}q,\zero)},\\
& w_1= \trasp{(\zero,Mq)},&& w_2= \trasp{(\zero, MK_{12}q)}, && w_3= \trasp{(\zero,MK_{13}q)}, &&w_4= \trasp{(\zero, MK_{14}q)}, \\
& w_5= \trasp{(\zero,MK_{23}q)},&& w_6= \trasp{(\zero, MK_{24}q)}.
\end{align}
It is straightforward to check that, since $q$ is not collinear, the vectors $z_1,...,z_6,w_1,...,w_6$ are linearly independent in $\R^{8n}$. We can thus define the $12$-dimensional linear subspace
\begin{equation}\label{eq:E2p}
E_2^s:=\mathrm{Span}\big\{ z_1,z_2, w_1, w_2, z_3, z_4, w_3,w_4, z_5, z_6, w_5,  w_6\big \}.
\end{equation}

\begin{lem}\label{thm:lemma3}
Let $\gamma$ be a planar (RE) generated by a non collinear central configuration $q$. Then the  subspace $E_2^s$ is a $L_s$-invariant symplectic subspace of $(\R^{8n},\omega_0)$, and the restriction of $L_s$ to $E_2^s$ is represented by
\begin{equation}\label{eq:E2p}
 L_2^s\=L|_{E_2^s}
 = \begin{pmatrix}
 	k J & \Id & 0 & 0 & 0 & 0 \\
 	A & kJ & 0 & 0& 0 & 0 \\
 	0 & 0 & 0& \Id& 0 & 0 \\
 	0 & 0 & B & 0& 0 & 0 \\
 	0 & 0 & 0 & 0 & 0 & \Id\\
 	0 & 0 & 0 & 0 & B & 0
 \end{pmatrix},\end{equation}
where $A$ and $B$ are as in Lemma~\ref{thm:lemma2}.
\end{lem}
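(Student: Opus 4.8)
The plan is to follow the same two-step scheme as in the proof of Lemma~\ref{thm:lemma2}: first check that $E_2^s$ is symplectic, then verify the $L_s$-invariance by evaluating $L_s$ on each of the twelve spanning vectors, which simultaneously produces the matrix $L_2^s$ in the statement. Throughout write $K_{(1)}:=\Id$, $K_{(2)}:=K_{12}$, $K_{(3)}:=K_{13}$, $K_{(4)}:=K_{14}$, $K_{(5)}:=K_{23}$, $K_{(6)}:=K_{24}$, so that $z_i=\trasp{(K_{(i)}q,\zero)}$ and $w_i=\trasp{(\zero,MK_{(i)}q)}$.

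For symplecticity: each $z_i$ has vanishing momentum component and each $w_i$ has vanishing position component, so $\omega_0$ vanishes identically on $\mathrm{Span}\{z_1,\dots,z_6\}$ and on $\mathrm{Span}\{w_1,\dots,w_6\}$, while $\omega_0(z_i,w_j)$ equals, up to a global sign, $\mscal{K_{(i)}q}{K_{(j)}q}$. Thus the radical of $\omega_0|_{E_2^s}$ is trivial precisely when the $6\times 6$ Gram matrix $\big(\mscal{K_{(i)}q}{K_{(j)}q}\big)_{i,j}$ is invertible, and this holds as soon as $K_{(1)}q,\dots,K_{(6)}q$ are linearly independent, the mass scalar product being positive definite. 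That linear independence is exactly the non-collinearity of $q$ already noted before the statement: body by body the six vectors have components $(x_j,y_j,0,0)$, $(-y_j,x_j,0,0)$, $(0,0,x_j,0)$, $(0,0,0,x_j)$, $(0,0,y_j,0)$, $(0,0,0,y_j)$, which lie in three mutually orthogonal coordinate blocks and become dependent within a block only if $(x_j)_j$ and $(y_j)_j$ are proportional, i.e. only if $q$ is collinear. This half is routine.

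For the $L_s$-invariance and the matrix form, the computation runs exactly as in Lemma~\ref{thm:lemma2}, now fed with: (i) $\nabla U$ is $(-2)$-homogeneous, whence $D^2U(q)[q]=-2\nabla U(q)=2k^2Mq$ by the central configuration equation $\nabla U(q)=-k^2Mq$; (ii) the equivariance of $\nabla U$ under the block-diagonal rotations $e^{sK_{lr}}$, so that differentiating $\nabla U(e^{sK_{lr}}q)=e^{sK_{lr}}\nabla U(q)$ at $s=0$ and using $MK_{lr}=K_{lr}M$ gives $D^2U(q)[K_{lr}q]=-k^2MK_{lr}q$ for all $1\le l<r\le 4$; and (iii) the elementary identities, valid since each body of $q$ lies in $\R^2\times\{0\}$, that $K_s=kK_{12}$, that $K_{12}^2q=-q$, and that $K_{12}K_{(i)}q=0$ for $i=3,4,5,6$ (hence also $K_sK_{(i)}q=0$ for those $i$). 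Substituting these into $L_s\binom{a}{\alpha}=\binom{K_sa+M^{-1}\alpha}{D^2U(q)[a]+K_s\alpha}$ on the twelve basis vectors returns vectors lying again in $E_2^s$: on $z_1,z_2,w_1,w_2$ — for which, when $q$ is planar, both the vectors and the action of $K_s$ on them coincide with the corresponding data from Lemma~\ref{thm:lemma2} — one recovers the block $\left(\begin{smallmatrix}kJ&\Id\\A&kJ\end{smallmatrix}\right)$, while on $z_3,z_4,w_3,w_4$ and on $z_5,z_6,w_5,w_6$ the $K_s$-contributions drop out by (iii), leaving two copies of $\left(\begin{smallmatrix}0&\Id\\B&0\end{smallmatrix}\right)$. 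Reading off the coefficients gives $L_2^s$.

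No serious obstacle is expected. The only point calling for care is the bookkeeping in (iii): $k_{12}$ acts nontrivially only on the first two coordinates, whereas $k_{13},k_{14},k_{23},k_{24}$ carry the $(1,2)$-plane into the $(3,4)$-directions, so a second application of $k_{12}$ annihilates $K_{(i)}q$ for $i=3,4,5,6$; one must also apply correctly the fact that the $K_{(i)}q$ are linearly independent if and only if $q$ is non-collinear (collinearity of $q$ being exactly the proportionality of $(x_j)_j$ and $(y_j)_j$, which is what would force, say, $K_{13}q$ and $K_{23}q$ to be dependent).
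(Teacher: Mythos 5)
Your proposal is correct and follows essentially the same route as the paper: direct evaluation of $\omega_0$ on the spanning vectors for symplecticity, and evaluation of $L_s$ on each basis vector using the $(-2)$-homogeneity of $\nabla U$ (giving $D^2U(q)[q]=2k^2Mq$), the rotational equivariance identity $D^2U(q)[K_{lr}q]=-k^2MK_{lr}q$, and the vanishing $K_{12}K_{(i)}q=0$ for $i\ge 3$, which reproduces exactly the blocks $\bigl(\begin{smallmatrix}kJ&\Id\\A&kJ\end{smallmatrix}\bigr)$ and the two copies of $\bigl(\begin{smallmatrix}0&\Id\\B&0\end{smallmatrix}\bigr)$. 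Your Gram-matrix justification of non-degeneracy (reducing it to the linear independence of $K_{(1)}q,\dots,K_{(6)}q$, i.e.\ to non-collinearity) is in fact slightly more complete than the paper's computation of only the diagonal pairings $\omega_0(z_i,w_i)$, but it is a refinement of the same argument rather than a different approach.
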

\begin{proof}
	We start observing that 
	\begin{align*}
		\omega_0(z_i, z_j)& =\omega_0(w_i, w_j)=0 \quad \forall\, i, j,\\
		\omega_0(z_1, w_1)& = \langle \binom{\zero}{q},\binom{\zero}{Mq}\rangle =1,\\
		\omega_0(z_2, w_2)&= \langle \binom{\zero}{K_{12}q},\binom{\zero}{MK_{12}q}\rangle= \sum_{i=1}^n m_i |q_i|^2=1,\\
		\omega_0(z_3,w_3) &= \langle \binom{\zero}{K_{13}q},\binom{\zero}{MK_{13}q}\rangle = \sum_{i=1}^n m_i x_i^2 >0,\\
		\omega_0(z_4,w_4) &= \langle \binom{\zero}{K_{14}q},\binom{\zero}{MK_{14}q}\rangle = \sum_{i=1}^n m_i x_i^2 >0,\\
		\omega_0(z_5,w_5) &= \langle \binom{\zero}{K_{23}q},\binom{\zero}{MK_{23}q}\rangle = \sum_{i=1}^n m_i y_i^2 >0,\\
		\omega_0(z_6,w_6) &= \langle \binom{\zero}{K_{24}q},\binom{\zero}{MK_{24}q}\rangle = \sum_{i=1}^n m_i y_i^2 >0,
	\end{align*}
where we used the fact that $q$ is not collinear and hence at least one of the $x_i$ respectively $y_i$ does not vanish. 
This shows that the restriction of $\omega_0$ to $E_2^s$ is non-degenerate and hence $E_2^s$ is a symplectic subspace of $(\R^{8n},\omega_0)$. Notice however that, since $\omega_0(z_i,w_j)\neq 0$ in 
general, the basis $\{z_1,...,z_6,w_1,...,w_6\}$ is not a symplectic basis of $E_2^s$, not even after renormalization. 

By a direct computation, noticing that $K_s=k K_{12}$ and that 
$$D^2U(q)[K_{ij}q]=-k^2 K_{ij}q, \quad \forall (i,j)\neq (3,4),$$ 
we get 
\begin{align}
L_sz_1& = \binom{K_s q}{\zero} + \binom{\zero}{D^2U(q)[q]}=  k \binom{K_{12} q}{\zero} + 2k^2 \binom{\zero}{M q} = k z_2 + 2k^2 w_1,\\
L_sz_2&= k \binom{K_{12} K_{12} q}{\zero} + \binom{\zero}{D^2 U(q)[K_{12} q]} = - k z_1 - k^2 w_2,\\
L_sw_1&=  z_1+kw_2,\\
 L_sw_2&= z_2-kw_1,\\
  L_sz_j&= -k^2 w_j, \ \forall j \ge 3,\\ 
  L_s w_j&=  z_j, \ \forall j \ge 3. 
\end{align}
This shows at once that $E_2^s$ is $L_s$-invariant and that the restriction of $L_s$ to $E_2^s$ has the desired matrix representation with respect to the basis
$\{z_1,z_2,w_1,w_2,z_3,z_4,w_3,w_4,z_5,z_6,w_5,w_6\}.$ 
\end{proof}

In case the planar (RE) $\gamma$ is generated by a collinear central configuration it is easy to check that the subspace $E_2^s$ has dimension 8 and coincides with the subspace $E_2^d$ defined 
in the case of non-planar (RE). Moreover, one can use the same basis as for $E_2^d$. We leave the easy proof of this fact to the reader. 

\begin{rem}
As we have seen above, the invariant subspace $E_2$ depends both on the (RE) and on the central configuration generating it. Nevertheless,
$L_2^*$ leads in either case to linear instability (exactly as $L_1^*$) because of its non-trivial Jordan blocks structure. \qed
\end{rem}

%


\vspace{3mm}

\noindent \textbf{The invariant subspace $E_3$.} We define the linear subspace $E_3^d$ (resp. $E_3^s$) to be the symplectic orthogonal complement of $E_1 \oplus E_2^d$ (resp. $E_1 \oplus E_2^s$), and denote by $E_3^*$ either one of the subspaces $E_3^d, E_3^s$ whenever there is no need to distinguish between them. Clearly, $E_3^d$ (resp. $E_3^s$) is itself invariant, being the symplectic orthogonal complement of an invariant subspace.
By counting dimensions, we get that 
$$
\dim E_3^d= 8n-16,  \qquad \dim E_3^s= \left \{ \begin{array}{l} 8n-20 \quad \text{if} \ q \ \text{non-collinear},\\ 8n-16 \quad \text{if}\ q \ \text{collinear}.\end{array}\right.
$$

Summarizing the contents of the previous subsections, we have obtained a decomposition of the phase space into symplectic subspaces which are invariant for the linear Hamiltonian system. 

\begin{prop}\label{thm:invariant-decomposition}
Let $q$ be a central configuration generating a non-planar (RE) $\gamma$. Then
\[
\R^{8n}= E_1 \oplus E_2^d \oplus E_3^d
\]
is a decomposition of the phase space in symplectic subspaces which are invariant under the linear Hamiltonian flow~\eqref{eq:Ham-lin}.
A similar statement holds for planar (RE) replacing $E_2^d,E_3^d$ with $E_2^s,E_3^s$ respectively. \qed
\end{prop}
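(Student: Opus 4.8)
The plan is to derive the proposition from Lemmas~\ref{thm:lemma1}, \ref{thm:lemma2} and~\ref{thm:lemma3} together with elementary symplectic linear algebra, the only substantive point being that $E_1\oplus E_2^*$ (here and below $E_2^*$ stands for $E_2^d$ or $E_2^s$ according to the case) is a \emph{symplectic} subspace. Granting this, a symplectic subspace $V\subset(\R^{8n},\omega_0)$ always satisfies $\R^{8n}=V\oplus V^{\omega_0}$ with $V^{\omega_0}$ again symplectic; applying this to $V=E_1\oplus E_2^*$ yields $\R^{8n}=(E_1\oplus E_2^*)\oplus E_3^*$ as a direct sum of symplectic subspaces, with $\dim E_3^*=8n-\dim(E_1\oplus E_2^*)$. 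The invariance of $E_3^*$ is then automatic: the matrix $B_*$ in~\eqref{eq:Ham-lin-2} resp.~\eqref{eq:Ham-lin-pl-2} is symmetric, so $L_*=-JB_*$ is infinitesimally symplectic and $\omega_0(L_* v,w)=-\omega_0(v,L_* w)$ for all $v,w$; taking $v\in E_3^*$ and $w\in E_1\oplus E_2^*$, the right-hand side vanishes because $L_* w\in E_1\oplus E_2^*$ by the three lemmas, so $L_* v\in (E_1\oplus E_2^*)^{\omega_0}=E_3^*$.

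It therefore remains to show that $E_1\oplus E_2^*$ is symplectic, and this I would split into two steps. First, the sum $E_1+E_2^*$ is direct. A vector in $E_1\cap E_2^*$ is at once of the form $\trasp{(v,\ldots,v,\,m_1w,\ldots,m_nw)}$, with $v,w\in\R^4$, and of the form $\trasp{(\mathfrak a q,\,M\mathfrak b q)}$, where $\mathfrak a,\mathfrak b$ act diagonally on $\R^{4n}$ and lie in $\gen\{\Id,\vi,\vj,\vk\}$ in the non-planar (or planar collinear) case, and in $\gen\{\Id,k_{12},k_{13},k_{14},k_{23},k_{24}\}$ in the planar non-collinear case. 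Matching the two expressions block by block gives $\mathfrak a q_i=v$ and $\mathfrak b q_i=w$ for every $i$; multiplying by $m_i$, summing over $i$, and invoking the normalization $\sum_i m_iq_i=0$ forces $v=w=0$, hence $\mathfrak a q_i=\mathfrak b q_i=0$ for all $i$. In the quaternion case a non-zero element of $\gen\{\Id,\vi,\vj,\vk\}$ is invertible, so $q\neq0$ forces $\mathfrak a=\mathfrak b=0$; in the planar non-collinear case the $q_i$ span $\R^2\times\{0\}$ by non-collinearity, so $\mathfrak a,\mathfrak b$ vanish on $\R^2\times\{0\}$, which forces $\mathfrak a=\mathfrak b=0$ — this is exactly the linear independence of $z_1,\ldots,z_6$ recorded in the setup of Lemma~\ref{thm:lemma3}. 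Either way the vector is zero, and a dimension count gives $\dim(E_1\oplus E_2^*)=16$ in the first two cases and $20$ in the last, hence the stated dimensions of $E_3^*$.

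Second, $\omega_0$ restricted to $E_1\oplus E_2^*$ is non-degenerate. The key observation is that $E_1$ and $E_2^*$ are $\omega_0$-orthogonal: on the given bases, the pairings between two position-type or two momentum-type vectors vanish for trivial reasons, while each mixed pairing reduces, after applying $J$, to a scalar multiple of $\langle e_l,\mathfrak a\sum_i m_iq_i\rangle$ with $\mathfrak a$ one of $\Id,\vi,\vj,\vk$ resp.\ $\Id,k_{12},\ldots,k_{24}$, and again vanishes by $\sum_i m_iq_i=0$. Since $\omega_0$ is already non-degenerate on $E_1$ and on $E_2^*$ by Lemmas~\ref{thm:lemma1}--\ref{thm:lemma3}, the usual argument finishes: if $v=v_1+v_2\in E_1\oplus E_2^*$ is $\omega_0$-orthogonal to all of $E_1\oplus E_2^*$, pairing with $E_1$ and using $\omega_0(E_2^*,E_1)=0$ gives $v_1\in E_1\cap E_1^{\omega_0}=\{0\}$, and symmetrically $v_2=0$. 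Hence $E_1\oplus E_2^*$ is symplectic, which completes the argument.

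I do not expect a serious obstacle: the whole argument rests on the direct-sum check and the $\omega_0$-orthogonality computation, both powered by the center-of-mass normalization $\sum_i m_iq_i=0$. The one point deserving care is that a planar central configuration may generate a genuinely non-planar (RE), so in the ``$E_2^d$'' case one must not assume the $q_i$ span $\R^4$; this causes no trouble, since the invertibility of non-zero quaternions is used only through $q\neq0$ and never through the span of the configuration.
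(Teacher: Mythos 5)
Your argument is correct and follows essentially the same route as the paper, which obtains the proposition directly from Lemmas~\ref{thm:lemma1}--\ref{thm:lemma3} by defining $E_3^*$ as the symplectic orthogonal complement of $E_1\oplus E_2^*$ and observing that such a complement is automatically symplectic, complementary, and invariant under the ($\omega_0$-skew) matrix $L_*$. The only difference is that you spell out the checks the paper leaves implicit --- that $E_1\cap E_2^*=\{0\}$ and that $E_1$ and $E_2^*$ are $\omega_0$-orthogonal, both consequences of $\sum_i m_i q_i=0$ --- so that $E_1\oplus E_2^*$ is genuinely symplectic before taking its complement.
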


The next result provides an explicit symplectic change of coordinates which allows to split the original Hamiltonian system into three Hamiltonian subsystems, each of which is defined on the corresponding symplectic subspace appearing in the above decomposition. In what follows we have $k=4$ or $6$, where $k=4$ holds in case of a non-planar (RE) or of a planar (RE) generated by a collinear central configuration, and $k=6$ 
otherwise.

\begin{lem}\label{thm:new-coordinates}
Let $X=(g,z,w) \in \R^4 \times \R^k \times \R^{8n-4-k}$ and $Y=(G,Z,W) \in \R^4 \times \R^k \times \R^{8n-4-k}$. There exists a linear symplectic transformation in $\R^{8n}$  of the form 
\[
\begin{cases}
	Q= AX\\
	P= \traspinv{A} Y
\end{cases}
\]
where $A$ is an $M$-orthogonal matrix commuting with $K_*$ (here, $*=d$ in case of a non-planar (RE), and $*=s$ otherwise).
Furthermore  $(g,G)$, $(z,Z)$ and $(w,W)$ are symplectic coordinates of $E_1\oplus E_2^*\oplus E_3^*$.
\end{lem}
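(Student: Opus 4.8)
The plan is to obtain $A$ by gluing together three $M$-orthonormal, $K_*$-adapted bases of subspaces of the configuration space $\R^{4n}$, exploiting the fact that both $E_1$ and $E_2^*$ are of ``split'' type. Write $\mathcal V_1:=\gen\{\ve_1,\dots,\ve_4\}$ and let $\mathcal V_2^*\subset\R^{4n}$ be the configuration part of $E_2^*$; thus $\dim\mathcal V_2^*=k$, where $k=4$ if $*=d$ or if the generating central configuration is collinear, and $k=6$ if it is planar and non-collinear. By the very definitions one has $E_1=\mathcal V_1\times M\mathcal V_1$ and $E_2^*=\mathcal V_2^*\times M\mathcal V_2^*$, and the computations in Lemmas~\ref{thm:lemma1}--\ref{thm:lemma3} show that $\mathcal V_1$ and $\mathcal V_2^*$ are $K_*$-invariant. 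They are mutually $M$-orthogonal, since each $\ve_l$ is $M$-orthogonal to $\mathcal V_2^*$ exactly by vanishing of the centre of mass $\sum_i m_iq_i=0$; hence $\mathcal V_1\oplus\mathcal V_2^*$ is a genuine $(4+k)$-dimensional direct sum.

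Put $\mathcal V_{12}:=\mathcal V_1\oplus\mathcal V_2^*$ and $\mathcal V_3^*:=\mathcal V_{12}^{\perp_M}$, so that $E_1\oplus E_2^*=\mathcal V_{12}\times M\mathcal V_{12}$. A direct computation with $\omega_0$, using $\langle a,Mw\rangle=\mscal{a}{w}$ and the identity $M\mathcal V_3^*=\mathcal V_{12}^{\perp}$ (ordinary orthogonal complement), shows that the $\omega_0$-orthogonal complement of $\mathcal V_{12}\times M\mathcal V_{12}$ is exactly $\mathcal V_3^*\times M\mathcal V_3^*$; therefore $E_3^*=\mathcal V_3^*\times M\mathcal V_3^*$. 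Since $K_*$ is skew-symmetric and commutes with $M$, it is $M$-skew, so $\mathcal V_3^*$ is $K_*$-invariant, and we have obtained an $M$-orthogonal, $K_*$-invariant splitting $\R^{4n}=\mathcal V_1\oplus\mathcal V_2^*\oplus\mathcal V_3^*$ into pieces of dimension $4$, $k$ and $4n-4-k$.

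On the source copy $(\R^4)^n$ of $\R^{4n}$ I would then fix coordinate subspaces $S_1,S_2,S_3$ of the same dimensions $4,k,4n-4-k$, chosen $M$-invariant, mutually $M$-orthogonal and $K_*$-invariant: for $k=4$ one takes the first two $\R^4$-blocks and their complement; for $k=6$ one takes the first block as $S_1$, the second block together with a $2$-dimensional subspace of $\ker K_s$ carved out of the third block as $S_2$ (possible precisely because $K_s$, unlike $K_d$, is degenerate), and the complement as $S_3$. The crucial point of this bookkeeping — and this is exactly where $k=4$ versus $k=6$ enters — is that $K_*$ restricts to an $M$-skew operator on each $S_i$ and on each $\mathcal V_i^*$ with the same real normal form, i.e.\ with the same number of $\pm ki$ eigenvalue pairs and the same kernel dimension; these multiplicities are read off from Lemmas~\ref{thm:lemma1}--\ref{thm:lemma3} together with $K_d^2=-k^2I$ and the fact that $K_s$ has non-trivial kernel. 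The real normal form of skew-symmetric operators on a Euclidean space then yields $M$-isometries $A_i\colon S_i\to\mathcal V_i^*$ intertwining the two restrictions of $K_*$; setting $A:=A_1\oplus A_2\oplus A_3$ produces a matrix which is $M$-orthogonal, commutes with $K_*$, and carries $S_i$ onto $\mathcal V_i^*$.

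Finally, writing $X=(g,z,w)$ and $Y=(G,Z,W)$ with components indexed by $S_1,S_2,S_3$, the transformation $Q=AX$, $P=\traspinv{A}Y$ preserves $\omega_0$ for \emph{any} invertible $A$, because $\langle\traspinv{A}Y_1,AX_2\rangle=\langle Y_1,X_2\rangle$; so it is symplectic. Using $\trasp{A}MA=M$ and the $M$-invariance of the $S_i$ one checks that $E_i^*=\mathcal V_i^*\times M\mathcal V_i^*$ is carried onto the coordinate subspace on which only the $S_i$-components of $X$ and $Y$ are non-zero, whence $(g,G)$, $(z,Z)$ and $(w,W)$ are symplectic coordinates adapted to $E_1\oplus E_2^*\oplus E_3^*$. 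The main obstacle is the third step: one must force $A$ to be simultaneously $M$-orthogonal, $K_*$-equivariant and compatible with the threefold decomposition, which requires pinning down the $K_*$-eigenvalue multiplicities on each invariant piece — in particular explaining why the ``rotational'' block has dimension $4$ in the $4$D and collinear cases but dimension $6$ in the planar non-collinear case — and handling with care the degeneracy of $K_s$, which, in contrast with $K_d$, is not a complex structure.
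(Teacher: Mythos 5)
Your proposal is correct, but it proves the lemma by a genuinely different route than the paper. The paper's proof is an explicit column-by-column construction: it takes $r_1,\dots,r_4=\ve_1,\dots,\ve_4$, then $q,\ \vi q,\ -\vj q,\ \vk q$, and completes these to an $M$-orthonormal basis by Gram--Schmidt, adding new vectors in quadruples $(v,\vi v,-\vj v,\vk v)$; the quadruple pattern forces every $4\times4$ block of $A$ to commute with the matrix $\vi$, hence $A$ commutes with $K_d=k\,\diag(\vi,\dots,\vi)$, and only the non-planar case is written out, the planar cases being left to the reader. You instead work abstractly: you observe that $E_1$, $E_2^*$ and (after a short computation with $\omega_0$ which the paper leaves implicit) $E_3^*$ are all of the form $\mathcal V_i^*\times M\mathcal V_i^*$ for an $M$-orthogonal, $K_*$-invariant splitting $\R^{4n}=\mathcal V_1\oplus\mathcal V_2^*\oplus\mathcal V_3^*$, and then obtain $A$ blockwise from the normal form of skew-adjoint operators, after matching eigenvalue multiplicities. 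This buys a uniform treatment of all three cases, and in particular handles the $k=6$ case (degenerate $K_s$) on the same footing as the others, whereas the paper's quaternionic recipe is tailored to $K_d$; the price is that your $A$ is non-explicit, while the paper's explicit columns are what later make $\mathcal D=\trasp{A}D^2U(Q)A$ concrete. The multiplicity bookkeeping you flag as the main obstacle does close: for $*=d$ one has $K_d^2=-k^2\Id$, so only dimensions matter, and for $*=s$ both $\mathcal V_1$ and $\mathcal V_2^s$ carry exactly one $\pm ki$ eigenvalue pair, with kernel of dimension $2$ and $4$ respectively ($2$ in the collinear case), which is exactly what your choice of $S_2$ reproduces. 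One small caveat: you normalize $\trasp{A}MA=M$, whereas the paper's ``$M$-orthogonal'' means $\trasp{A}MA=\Id$ (columns $M$-orthonormal), and it is the latter that produces the identity block $A^{-1}M^{-1}\traspinv{A}=\Id$ used in Theorem~\ref{thm:structure-L}; this is repaired at no cost by letting each $A_i$ carry a Euclidean-orthonormal $K_*$-adapted basis of $S_i$ to an $M$-orthonormal $K_*$-adapted basis of $\mathcal V_i^*$ (equivalently, replacing $A$ by $AM^{-1/2}$, which still commutes with $K_*$ and preserves the coordinate splitting).
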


\begin{proof}
We prove the claim in case of a non-planar (RE) (in particular, $k=4$) leaving the other cases to the reader. We first construct the first eight columns of the $4n\times 4n$-matrix $A$, and then construct the remaining columns by means of the Gram-Schmidt orthonormalization method with respect to the $M$-scalar product. 

Thus consider the following 8 vectors in $\R^{4n}$:
$$r_l := \ve_l, \quad l=1,...,4,\quad r_5 := q, \quad r_6 := \vi q, \quad r_7 := -\vj q, \quad r_8 := \vk q,$$
where as usual $\vi\cdot,\vj\cdot,\vk\cdot$ denote with slight abuse of notation the diagonal left-multiplication in $\R^{4n}$ with the quaternions $\vi,\vj,\vk$ respectively. 
We shall stress the fact that there is a minus sign in front of $\vj$. In fact, this is needed in order to have that $A$ commutes with $K_d$, which we recall is nothing else but $k\, \text{diag}(\vi,...,\vi)$.

Since we have normalized the masses in such a way that their sum is equal to 1, the $M$-norm of $r_l$ is equal to 1 for every $l=1,...,4$. Also, the $M$-norm of $r_5,...,r_8$ is also equal to 1 since $q$ is by assumption a normalized central configuration. 
It is straightforward to check that  $r_5, r_6, r_7 , r_8$ are $M$-orthogonal to each other (this is a direct consequence of the multiplicative table of the imaginary units and the fact that in each $4 \times 4 $-block the matrix $M$ is a scalar multiple of the identity). Finally, we observe that $r_5, r_6, r_7, r_8$ are $M$ orthogonal to the first four since the center of mass lies at the origin by assumption.

Arguing recursively we construct the remaining $4n-8$ vectors by quadruples. We set 
$$V_1:= \mathrm{Span}\{r_1, \ldots r_8\}$$
and choose $ s_9\notin V_1$. Denoting with $\pi_{V_1}^M:\R^{4n}\to \R^{4n}$ the $M$-orthogonal projection onto $V_1$, we have that 
$$r_9 := \frac{\tilde s_9}{|\tilde s_9|_M}, \qquad \tilde s_9 := s_9 - \pi_{V_1}^M s_9,$$
is orthogonal to $V_1$ and has $M$-norm 1. We finally set 
$$r_{10}:= \vi r_9, \quad r_{11} := -\vj r_9,\quad r_{12}:= \vk r_9.$$
Arguing as before, we see that the $M$-normalized vectors $r_9,...,r_{12}$ are pairwise $M$-orthogonal to each other as well as $M$-orthogonal to $r_1,...,r_8$. 
The remaining columns of the matrix $A$ are constructed analogously. One easily checks that $A$ commutes with $K_d$. Indeed, writing $A=(A_{lm})_{l,m=1,...,n}$, where each $A_{lm}$ is a $4\times 4$-block,
it is straightforward to check that the condition $AK_d=K_d A$ is equivalent to 
$$\vi \cdot A_{ij} = A_{ij} \cdot \vi, \quad \forall i,j,$$
where here we identify $\vi$ with the corresponding real $4\times 4$-matrix.
Now, one readily computes for the $4\times 4$-block (being the computations for the remaining blocks identical)
$$A_{21}= (q_1,\vi q_1 , -\vj q_1,\vk q_1) = \left ( \begin{matrix} x_1 & - y_1 & -z_1 & - w_1 \\ y_1 & x_1 & -w_1  & z_1 \\ z_1 & w_1 & x_1 & -y_1  \\ w_1 & -z_1 & y_1 & x_1 \end{matrix}\right ),$$
where $q_1=\trasp{(x_1,y_1,z_1,w_1)}$ is the first vector of the central configuration $q$, 
$$\vi \cdot A_{21} = \left ( \begin{matrix} -y_1 & -x_1 & w_1 & - z_1 \\ x_1 & -y_1 & -z_1  & -w_1 \\ w_1 & -z_1 & y_1 & x_1  \\-z_1 & -w_1 & -x_1 & y_1 \end{matrix}\right ) = A_{21}\cdot \vi.$$
The matrix $A=(r_1,...,r_{4n})$ is by construction $M$-orthogonal, that is we have $\trasp A MA =I$, which can be equivalently rewritten as $\traspinv{A}=MA$. 
In particular, the first eight columns of $\traspinv{A}$ are given by 
$$t_l = M \ve_l, \quad l=1,...,4, \quad t_5 =Mq,\quad t_6 = M\vi q, \quad t_7 = - M\vj q,\quad t_8=M\vk q.$$

This concludes the  proof of the first claim.   The second claim follows as well noticing that 
$E_1$ is generated by the vectors $\trasp{(r_l,\zero)}, \trasp{(\zero,t_l)}$, $l=1,...,4$, and
$E_2$ is generated by the vectors  $\trasp{(r_l,\zero)}, \trasp{(\zero,t_l)}$, $l=5,...,8$.
\end{proof}

The next result is a straightforward consequence of Lemma~\ref{thm:new-coordinates} and provides the structure of the matrix $L$  in the new coordinates system.

\begin{thm}\label{thm:structure-L}
Let $\gamma$ be a non-planar (RE). Then, after the symplectic change of coordinates provided in Lemma~\ref{thm:new-coordinates}, the Hamiltonian matrix $L_d$ given by~\eqref{eq:Ham-lin} takes the form
\[
L_d=\begin{pmatrix}
L_1& 0 & 0\\	
0 & L_2^d & 0 \\
0& 0 & L_3^d
\end{pmatrix}
\]
where $L_1$ is given by~\eqref{eq:L1}, $L_2^d$ is given by~\eqref{eq:E2n}, and
\begin{equation}\label{eq:L3*}
\displaystyle  L_3^d\=L|_{E_3^d}= \begin{pmatrix}
	K_d & \Id    \\
	 \mathcal D & K_d
\end{pmatrix}, \qquad \mathcal D= \big[\trasp{A} D^2 U(Q) A\big]\Big|_{\R^{4n-8}},
\end{equation}
where with slight abuse of notation we denote the $(4n-8)\times (4n-8)$-matrix $k \diag (\vi,...,\vi)$ again with $K_d$. \qed
\end{thm}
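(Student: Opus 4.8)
The plan is to read off the asserted block form from what has already been established: the decomposition $\R^{8n}=E_1\oplus E_2^d\oplus E_3^d$ into $L_d$-invariant symplectic subspaces (Proposition~\ref{thm:invariant-decomposition}) and the adapted symplectic frame built in Lemma~\ref{thm:new-coordinates}. First I would package the change of variables of Lemma~\ref{thm:new-coordinates} as a single $8n\times 8n$ matrix $\mathcal A=\diag(A,\traspinv A)$, so that the matrix $L_d$ of \eqref{eq:Ham-lin} is transformed into $\mathcal A^{-1}L_d\mathcal A$. Using that $A$ commutes with $K_d$ (whence $\trasp A$ does as well, $K_d$ being skew-symmetric) and that $A$ is $M$-orthogonal, so that $A^{-1}=\trasp A M$ and $\traspinv A=MA$, a one-line matrix multiplication with $L_d$ as in \eqref{eq:Ham-lin-2} gives
\[
\mathcal A^{-1}L_d\mathcal A=\begin{pmatrix} K_d & \Id \\ \trasp A\,D^2U(q)\,A & K_d\end{pmatrix},
\]
where now $K_d=k\,\diag(\vi,\dots,\vi)$ is $4n\times 4n$ and the Hessian is evaluated at the central configuration $q$ (the (RE) being the stationary solution in the rotating frame).

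The next step is to show that $\trasp A\,D^2U(q)\,A$ is block diagonal with respect to the partition $\R^{4n}=\R^4\oplus\R^4\oplus\R^{4n-8}$ cut out by the first four columns $r_1,\dots,r_4=\ve_1,\dots,\ve_4$ of $A$, the next four $r_5,\dots,r_8=q,\vi q,-\vj q,\vk q$, and the Gram--Schmidt completion $r_9,\dots,r_{4n}$. Here I would invoke the identities already used in the proofs of Lemmas~\ref{thm:lemma1} and \ref{thm:lemma2}: $D^2U(q)\ve_l=0$; $D^2U(q)[q]=2k^2Mq$ (Euler's relation for the $(-2)$-homogeneous $\nabla U$ together with \eqref{eq:cc-eq-2}); and $D^2U(q)[\vi q]=-k^2M\vi q$, with the analogous identities for $\vj q$ and $\vk q$ (differentiate at $t=0$ the relation $\nabla U(e^{tX}q)=-k^2Me^{tX}q$, valid since $e^{tX}q$ is again a normalized central configuration with the same $\lambda$ for every $X\in\{\vi,\vj,\vk\}\subset\mathfrak{so}(4)$, by $\SO(4)$-invariance of $U$). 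These say that $D^2U(q)$ maps each of the first eight columns of $A$ into the span of $\{Mr_1,\dots,Mr_8\}$, i.e.\ of the first eight columns of $\traspinv A=MA$; and since $\trasp A(Mr_j)=e_j$ by $M$-orthonormality of the columns of $A$, the first eight columns of $\trasp A\,D^2U(q)\,A$ are supported in its first eight rows. The genuinely delicate point — and the only one that is more than bookkeeping — is to upgrade this from block triangular to block diagonal: that follows because $\trasp A\,D^2U(q)\,A$ is symmetric, so its first eight rows are in turn supported in the first eight columns. Hence $\trasp A\,D^2U(q)\,A=\diag(0_4,\mathcal D_2,\mathcal D)$ with $\mathcal D_2=\diag(2k^2,-k^2,-k^2,-k^2)$ (the vanishing $4\times4$ block coming from $D^2U(q)\ve_l=0$) and $\mathcal D=[\trasp A\,D^2U(q)\,A]\big|_{\R^{4n-8}}$.

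Finally, since $K_d=k\,\diag(\vi,\dots,\vi)$ is already block diagonal for the same partition, $\mathcal A^{-1}L_d\mathcal A$ is block diagonal with blocks of sizes $8$, $8$ and $8n-16$; regrouping the coordinates into the symplectic pairs $(g,G)$, $(z,Z)$, $(w,W)$ of $E_1\oplus E_2^d\oplus E_3^d$ produces the three blocks $\begin{pmatrix} K_d & \Id\\ 0 & K_d\end{pmatrix}$, $\begin{pmatrix} K_d & \Id\\ \mathcal D_2 & K_d\end{pmatrix}$ and $\begin{pmatrix} K_d & \Id\\ \mathcal D & K_d\end{pmatrix}$ (with $K_d$ of the corresponding sizes). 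By construction these are the matrices of $L_d|_{E_1}$, $L_d|_{E_2^d}$ and $L_d|_{E_3^d}$; the first two agree with \eqref{eq:L1} and \eqref{eq:E2n} as computed in Lemmas~\ref{thm:lemma1}--\ref{thm:lemma2} (after the harmless reordering and sign normalization of the bases of $E_1$ and $E_2^d$ that relates them to the frame of Lemma~\ref{thm:new-coordinates}), and the third is exactly \eqref{eq:L3*}. The analogous statements for planar (RE), which require the $*=s$ frame of Lemma~\ref{thm:new-coordinates} and the correspondingly enlarged $E_2^s$, are proved in the same way.
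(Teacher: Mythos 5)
Your proof is correct and is essentially the argument the paper intends: the theorem is stated as a direct consequence of Lemma~\ref{thm:new-coordinates}, and you fill in exactly that computation — conjugating $L_d$ by $\diag(A,\traspinv A)$, using $M$-orthogonality and commutation with $K_d$, and the identities $D^2U(q)\ve_l=0$, $D^2U(q)[q]=2k^2Mq$, $D^2U(q)[Xq]=-k^2MXq$ for $X\in\{\vi,\vj,\vk\}$ together with symmetry of $\trasp A D^2U(q)A$ to get block diagonality. The only caveat, which you already flag, is that the $E_1$ and $E_2^d$ blocks coincide with \eqref{eq:L1} and \eqref{eq:E2n} only up to the reordering/sign change relating the bases of Lemmas~\ref{thm:lemma1}--\ref{thm:lemma2} to the frame of Lemma~\ref{thm:new-coordinates}, which matches the paper's own level of precision.
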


\begin{rem}
An analogous statement holds also in case of a planar (RE) replacing $L_d$, $K_d$ with $L_s$, $K_s$ respectively, and in case the central configuration generating it is planar replacing $4n-8$ with $4n-10$. Notice 
however that in this latter case the representation of $L_2^s$ in the new coordinates is not the one given by the matrix in~\eqref{thm:lemma3}, since the considered basis was not symplectic. \qed
\end{rem}



\subsection{More on $L_3^*$ and the corresponding symmetric matrix $B_3^*=JL_3^*$}
\label{subsec:Morse-Bott}

In the last section we provided a symplectic change of coordinates, inspired by Meyer and Schmidt's one \cite{MS05}, 
that allows to rule out the uninteresting part of the linearized dynamics coming from the symmetries of the $n$-body problem. 
However, as we shall see below, unlike in the planar case one is in general not able to rule out the nullity of 
the Hessian of $\widehat U$ at the central configuration $q$ completely, not even after such a symplectic decomposition and not even under the assumption that 
the considered central configuration be Morse-Bott non-degenerate. 

More precisely, consider the Hamiltonian matrix 
$$L_3^* = \left (\begin{matrix} K_* & \Id \\ \mathcal D & K_* \end{matrix} \right ),$$
where $*$ stands either for $d$ or $s$, according to the (RE) being non-planar or planar, and the associated symmetric matrix (notice that $\trasp{K_*}=-K_*$)
$$B_3^*:=JL_3^* = \left (\begin{matrix} - \mathcal D & -K_*\\ K_* & \Id\end{matrix}\right ).$$ 
The identity
\begin{equation*}
\begin{pmatrix}
	\Id & K_* \\
	0 & \Id 
\end{pmatrix}\begin{pmatrix}
	-\mathcal D & -K_*\\
	K_* & \Id
\end{pmatrix}\begin{pmatrix}
	\Id & 0\\
	-K_* & \Id
\end{pmatrix}=
\begin{pmatrix}
-\big[\mathcal D + U(q)\big]	 & 0 \\
 0 & \Id 
\end{pmatrix}
\end{equation*}
shows that $B_3^*$ is similar to a block diagonal matrix having $-[\mathcal D + U(q)]$ and $\Id$ as diagonal blocks. Since
\[
\mathcal D + U(q)=  \trasp{A}\big( D^2 U(q) + U(q)\big) A\Big|_{\R^{4n-4-k}}=
\trasp{A}H(q) A\Big|_{\R^{4n-4-k}}
\]
where $A$ is matrix given in Proposition~\ref{thm:new-coordinates}, we infer that the eigenvalues of $B_3^*$ are given by minus the $4n-4-k$ eigenvalues of $H(q)$\footnote{Here with slight abuse of notation we denote the restriction of $H(q)$ to the $(4n-4-k)$-dimensional subspace of $T_q \mathbb S$ obtained after the symplectic decomposition again with $H(q)$.} (the Hessian of $\widehat U$ at the considered 
central configuration), and $4n-4-k$ eigenvalues equal 1. As in the previous section, $k=4$ in case of a non-planar (RE) or of a planar (RE) defined by a collinear central configuration and $k=6$ otherwise. 

Recall that the Hessian $H(q)$ is defined on the $(4n-5)$-dimensional tangent space $T_q \mathbb S$ and that 
$$\nullity{H(q)} \ge \dim T_q( \SO(4)\cdot q) = \left \{ \begin{array}{l} 6 \quad \text{if} \ q \ \text{non-planar}, \\ 5 \quad \text{if} \ q \ \text{planar non-collinear}, \\ 3 \quad \text{if} \ q \ \text{collinear}, \end{array}\right.$$
and that equality holds if $q$ is Morse-Bott non-degenerate. By comparing dimensions, we obtain that
$$\nullity{B_3^*} \ge \left \{ \begin{array}{l} 3  \quad \text{if} \ \gamma \ \text{non-planar, and } q \ \text{non-planar},\\
								2  \quad \text{if} \ \gamma \ \text{non-planar, and } q \ \text{planar},\\
								0 \quad \text{if} \ \gamma \ \text{planar, and } q \ \text{planar}.\end{array}\right .$$
and that equality holds if $q$ is Morse-Bott non-degenerate. In other words, if $\gamma$ is a non-planar (RE), then the symmetric matrix $B_3^d$ is always degenerate. 
Roughly speaking this means that, because of the complexity of the special orthogonal group $\SO(4)$, the symplectic decomposition does not allow us to remove all the 
degeneracy coming from the rotational invariance of the problem. 
This also poses major difficulties 
when trying to relate the spectrum of $B_3^d$ and that of $JB_3^d$ via the spectral flow of a suitable path of symmetric matrices starting at $B_3^d$, 
since all known formulas for computing the spectral flow assume that the endpoints of the path are invertible. For this reason, in the next section we will prove a formula of independent interest 
that allows to compute the spectral flow of a path of symmetric matrices having degenerate starting point.


\section{Spectral flow for path of selfadjoint operators in finite dimension}\label{sec:ss-ls-ham-sys}

The spectral flow is an integer-valued homotopy invariant of paths of selfadjoint Fredholm operators introduced by Atiyah, Patodi and Singer in the seventies in connection with the \textit{eta-invariant} and \textit{spectral asymmetry}. This section is devoted to recall the basic definitions and properties of the spectral flow for paths of self-adjoint operators in finite-dimensional complex vector spaces and prove a new formula for computing the spectral flow in case of affine paths having degenerate starting point. Such a formula is of independent interest and will be needed in an
essential way in Section~\ref{sec:stability-mod-2-sf}. Indeed, the discussion in Section~\ref{subsec:Morse-Bott} implies that the paths of self-adjoint operators arising in the study of the linear stability properties of a non-planar (RE) always have degenerate starting point, independently of the defining central configuration being (Morse-Bott) non-degenerate or not, 
even after the symplectic decomposition discussed in Section~\ref{subsec:decomposition}. 


\subsection{Definition and basic properties of the spectral flow}\label{subsec:spectral-flow}

	Let $\H$ be a finite-dimensional complex Hilbert space (we shall specify its dimension when needed). We denote by $\B(\H)$ the set of all (bounded) linear operators $T : \H \to \H$ and by $\Bsa(\H)$ the subset of all (bounded) linear self-adjoint operators on $\H$. 		For any $T \in \Bsa(\H)$, we define its {\sc index} $\iMor{T}$, its {\sc nullity} $\nullity{T}$ and its {\sc coindex} $\coiMor{T}$ as the numbers of negative, null and positive eigenvalues 	respectively. 	 The {\sc signature} $\sgn(T)$ of $T$ is the difference between its coindex and its index: $\sgn(T) \= \coiMor{T} - \iMor{T}$. The {\sc extended index\/} and {\sc extended coindex\/} of $T \in \Bsa(\mathcal H)$ are defined respectively by 
\[
\extiMor{T}= \iMor{T}+ \nullity{T}\qquad  \textrm{ and } \qquad \extcoiMor{T}= \coiMor{T}+ \nullity{T}.
\]
	

The {\sc spectral flow\/} $\spfl(T_t, t \in [a,b])$ of a continuous path $T: [a,b] \to \Bsa (\H)$ is roughly speaking given by the number of negative eigenvalues of $T_a$ that become positive minus the number of positive eigenvalues of $T_a$ that become negative as the parameter $t$ runs from $a$ to $b$. In other words, the spectral flow measures the net change of eigenvalues crossing $0$ and can be interpreted as a sort of generalized signature. More precisely, we have the following definition.
			\begin{dfn}\label{def:spectralflow}
		Let $a, b \in \R$, with $a < b$, and let $T :[a,b] \to \Bsa (\H)$ be a continuous path.  The { \sc spectral flow} of $T$ on the interval $[a, b]$ is defined by
			\[
				\spfl \big( T, [a, b] \big) \= \iMor{T_a  }- \iMor{ T_b} = \extcoiMor{T_b  }- \extcoiMor{ T_a}.
	 		\]
	 		A path of operators having invertible ends will be usually referred to as {\sc admissible.\/} 
	\end{dfn}

Here we list some properties of the spectral flow that will be used in this paper. In what follows,  every path of self-adjoint operators is assumed to be continuous. 

\begin{enumerate}
	\item {\sc Normalization.\/} Let $T:[a,b] \to \mathrm{GL}^{sa}(\H)$ be a path of invertible operators. Then 
	\[
	\spfl(T_t, t \in [a,b])=0.
	\]
	\item {\sc Invariance under Cogredience.\/} Let $T:[a,b] \to 	\Bsa(\H)$. Then for any  $M: [a,b] \to \mathrm{GL}^{sa}(\H)$ we have
	\[
	\spfl(T_t, t \in [a,b])=\spfl(M^*_tT_tM_t, t \in [a,b]). 
	\]
	\item {\sc Concatenation.\/} Let  $T:[a,b] \to \Bsa(\H)$ and  $c\in[a,b]$. Then	
	\[
	\spfl(T_t, t \in [a,b])= \spfl(T_t, t \in [a,c]) + \spfl(T_t, t \in[c,b])
	\]
	\item{\sc Homotopy invariance property.\/} If $F:[0,1]\times [a,b] \to \Bsa(\H)$ is a continuous family such that such that $\dim\ker F(\cdot, a)$ and $\dim\ker F(\cdot, b)$ are both constant (thus, independent on $s \in [0,1]$), then	
	\[
	\spfl(F(1,t), t \in [a,b]) = \spfl (F(0,t), t\in [a,b]).
		\]
	\item {\sc Direct sum property\/}	For $i = 1,2$, let  
$\H_i$ be two (finite dimensional) Hilbert spaces and  $T_i: [a,b] \to \Bsa(\H_i)$ be  two  paths of self-adjoint operators. Then
\[
\spfl(T_1\oplus T_2, [a, b]) = \spfl(T_1, [a, b]) + \spfl(T_2, [a, b]).
\]
\end{enumerate}

\subsection{Spectral flow for affine paths of selfadjoint operators}

Throughout the last decades much effort has been put in trying to provide efficient ways to compute 
the spectral flow of a path of self-adjoint operators. Under suitable non-degeneracy assumptions at the ``crossing'' instants, one such a way is provided through 
the so-called {\sc crossing forms\/} \cite{RS95}. However, in the applications such additional non-degeneracy assumptions are often not satisfied resp. hard to check. For this reason, authors in 
 \cite{GPP03, GPP04} provided an explicit formula to compute the spectral flow for arbitrary real analytic paths based on the theory of {\sc partial signatures\/}. 
 
 In case of affine paths, which is the only case we will be interested in throughout the paper, such formulas significantly simplify: let $C, A \in \Bsa(\H)$ with $C$ invertible, and consider the affine path 
		$$\widetilde C : (0, +\infty) \to \Bsa(\H), \quad \widetilde {D}(s) \= sA + C.$$
		If $s_* \in (0, +\infty)$ is a (possibly non-regular) crossing instant\footnote{As the path is real analytic, crossing instants are automatically isolated.} for $\widetilde{D}$, so that $1/s_*$ is an eigenvalue of $-C^{-1}A$, then for $\eps > 0$ small enough
\begin{equation} \label{eq:sfaffine}
				\spfl \bigl( \widetilde{C}(s), [s_* - \eps, s_* + \eps] \bigr) = -\sgn \Big (\langle C\,\cdot, \cdot \rangle \bigr|_{\H_{s_*}}\Big ),
			\end{equation}
		where $\H_{s_*}$ is the generalized eigenspace of the eigenvalue $1/s^*$ of $C^{-1}A$. 
By taking into account all local contributions to the spectral flow one gets the following explicit formula for computing the spectral flow of an admissible 
affine path of self-adjoint operators
	\begin{align*}
	\spfl(\widetilde C(s), s \in [a,b])= \ -\sum_{\mathclap{\substack{s_* \in (a,b) \\ s_* \textup{ crossing}}}}\ \sgn \Big (\langle C\,\cdot, \cdot \rangle \bigr|_{\H_{s_*}}\Big ).
	\end{align*}
Such a formula can be applied only if the path is admissible, hence in particular only if $s_*=a$ is not a crossing instant. As in the cases we will be interested in this will often not be the case, we provide below a way to compute the spectral flow of an affine path having a crossing instant at the starting point. 

Thus, let $S\in \Bsa(\mathcal H)$ and $L \in \mathcal B(\mathcal H)$ be such that $SL$ is self-adjoint, and consider the affine path  of self-adjoint operators  
 $\widetilde  C : \R \to \Bsa(\mathcal H)$ defined by
			\[
				\widetilde C(t) \= SL+ t S
			\]
We stress the fact that we are not assuming here that $S$ be invertible resp. that $L$ be self-adjoint. Then for every $x, y \in \mathcal H$ and every positive integer $\ell$ we have  
\begin{equation}\label{eq:proprietaSL}
\langle SL^\ell x,y\rangle= \langle Sx,L^\ell y\rangle ,
\end{equation}
as it readily follows from the fact that $S$ and $SL$ are self-adjoint. 
The Fitting decomposition theorem implies that there exists $m \in \N$ such that 
\[
\mathcal H= \ker L^m \oplus \im L^m .
\]
Such a decomposition is actually $S$-orthogonal: for $x \in \ker L^m$ and $y \in \im L^m$, we write $y=L^m u$ for suitable $u \in \mathcal H$ and compute using~\eqref{eq:proprietaSL}
\begin{equation}\label{eq:S-orto}
\langle Sx, y\rangle=\langle Sx,L^mu \rangle=\langle SL^m x, y \rangle=0 . 
\end{equation}
We set $V\= \ker L^m$, $W\=\im L^m$, and let $\pi_1, \pi_2:\mathcal H\to \mathcal H$  be the canonical projections onto $V$ and $W$ respectively. Clearly, $V$ and $W$ are $L$-invariant subspaces 
of $\mathcal H$. We now define 
\[
S_1\= \pi^*_1 S \pi_1,\qquad  S_2\= \pi^*_2 S \pi_2, \qquad L_1\=\pi_1 L \pi_1, \qquad L_2\=\pi_2 L \pi_2,
\]
and observe that $L_1^m=0$ and $ \ker L_2=V$. For $i=1,2$, let us consider the affine paths $\widetilde C_i:[a,b] \to \Bsa(\H)$ pointwise defined by
\[
\widetilde C_1(t)= S_1L_1+ t S_1 \quad \textrm{ and } \quad \widetilde C_2(t)= S_2L_2+ t S_2.
\]
Given  $x_1,x_2 \in V,  y_1,y_2 \in W$, we compute
\begin{align}
\langle \widetilde C(t)(x_1+y_1), x_2+ y_2 \rangle	& = \langle (SL+tS)(x_1+y_1), x_2+ y_2 \rangle\nonumber \\
										& = \langle (SL+tS)x_1, x_2\rangle+ \langle (SL+tS)y_1,  y_2 \rangle \label{eq:conti} \\ 
										& + \langle (SL+tS)x_1, y_2 \rangle+ \langle (SL+tS) y_1, x_2 \rangle. \nonumber 
\end{align}
Using $x_i= \pi_1 x_i$ and $y_i=\pi_2 y_i$ we get for the first term in the (RHS) of~\eqref{eq:conti}
\begin{align*}
	 \langle (SL+tS)x_1, x_2\rangle & =  \langle (SL+tS)\pi_1 x_1, \pi_1 x_2\rangle \\
	 						& = \langle \pi_1^*(SL+tS)\pi_1 x_1,  x_2\rangle\\ 
							& = \langle \pi_1^*SL\pi_1 x_1,  x_2\rangle+t \langle \pi_1^*S\pi_1 x_1, x_2\rangle\\
							& = \langle \pi_1^*S\pi_1\pi_1L\pi_1 x_1,  x_2\rangle+ t\langle \pi_1^*S\pi_1 x_1, x_2\rangle\\
							& = \langle S_1L_1 x_1, x_2 \rangle + t \langle S_1 x_1 , x_2\rangle \\
							& = \langle \widetilde C_1(t) x_1, x_2\rangle
\end{align*}
Similarly, for the second term in the (RHS) of~\eqref{eq:conti} we compute
$$ \langle (SL+tS)y_1, y_2\rangle= \langle \widetilde C_2(t) y_1, y_2\rangle .$$
The last two terms in the (RHS) of~\eqref{eq:conti} vanish: indeed, for the third term (being the argument for the fourth one completely analogous) we compute using~\eqref{eq:proprietaSL},~\eqref{eq:S-orto}, and $y_2=L^m u_2$ for some $u_2\in \mathcal H$
\begin{align*}
\langle (SL+tS)x_1, y_2 \rangle & = \langle SL x_1, L^m u_2 \rangle + t \langle S x_1, L^m u_2 \rangle = \langle SL^{m+1} x_1, u_2 \rangle + t \langle SL^m x_1, u_2 \rangle= 0. 
\end{align*}
In conclusion we have shown that
\begin{equation}
\langle \widetilde C(t)(x_1+y_1), (x_2+y_2)\rangle 
= \langle \widetilde C_1(t) x_1,x_2\rangle+\langle \widetilde C_2(t) y_1,y_2\rangle
\end{equation}
and hence,  by the additivity of the spectral flow under direct sum, 
\[
\spfl(\widetilde C(t), t \in [a,b])= \spfl(\widetilde C_1(t), t \in [a,b])+ \spfl(\widetilde C_2(t), t \in [a,b]), \quad \forall [a,b]\subset \R.
\]
The next result provides an explicit formula to compute the contribution to the spectral flow provided by the crossing instant $t=0$. It is worth noticing that Item 3 in the proposition below 
is precisely~\eqref{eq:sfaffine}. Here we therefore give an alternative proof of~\eqref{eq:sfaffine} which does not  rely on the theory of partial signatures.

\begin{prop}\label{lem:homotopia-zero}
		There exists $\varepsilon>0$ sufficiently small  such that the following hold:
		\begin{enumerate} 
			\item $\spfl(\widetilde C(t), t \in [\alpha, \beta ])= \spfl(\widetilde C_1(t), t \in [\alpha, \beta ])$ for every $[\alpha, \beta ] \subset [-\varepsilon, \varepsilon]$.
			\item  $\spfl(\widetilde C_1(t), t \in [0 ,\varepsilon ])= \iMor{S_1L_1}-\iMor{S_1}$, and $\ \spfl(\widetilde C_1(t), t \in [-\varepsilon,0 ])= \coiMor{S_1}- \iMor{S_1L_1}$.			
			\item  $\spfl(\widetilde C_1(t), t \in [-\varepsilon,\varepsilon ])= \sgn(S_1)$.
		\end{enumerate}
\end{prop}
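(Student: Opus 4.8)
The plan is to reduce everything to the nilpotent piece $\widetilde C_1(t)=S_1 L_1 + tS_1$ on $V=\ker L^m$, where $L_1$ is nilpotent of order $m$, and then analyse that piece directly. For Item~1, I would invoke the direct sum decomposition $\spfl(\widetilde C(t),[a,b]) = \spfl(\widetilde C_1(t),[a,b]) + \spfl(\widetilde C_2(t),[a,b])$ established just above the statement, and observe that $\widetilde C_2(0) = S_2 L_2$ is \emph{invertible}: indeed $L_2$ is invertible on $W = \im L^m$ (that is the content of the Fitting decomposition), and $S_2$ is invertible on $W$ because $S$ is self-adjoint on $\mathcal H$ with $\ker S \subseteq \ker L^m = V$ — more precisely, $S_2 = \pi_2^* S \pi_2$ restricted to $W$ is nondegenerate since the form $\langle S\cdot,\cdot\rangle$ is nondegenerate on $W$ (if $x\in W$ had $\langle Sx, W\rangle = 0$, then by $S$-orthogonality of the Fitting decomposition also $\langle Sx, V\rangle = 0$, forcing $Sx = 0$, hence $x\in\ker S\subseteq V$, so $x = 0$). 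Therefore $\widetilde C_2$ stays invertible on a whole neighbourhood $[-\varepsilon,\varepsilon]$ of $0$, so by the \textsc{Normalization} property $\spfl(\widetilde C_2(t),[\alpha,\beta]) = 0$ for all $[\alpha,\beta]\subseteq[-\varepsilon,\varepsilon]$, giving Item~1.

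\textbf{The nilpotent piece.} For Items~2 and~3 I would work entirely on $V$ and exploit that $L_1$ is nilpotent. The key algebraic fact is that $\widetilde C_1(t) = S_1(L_1 + t\,\Id_V)$, and for $t\neq 0$ the operator $L_1 + t\,\Id_V$ is invertible (its only eigenvalue is $t$, since $L_1$ is nilpotent). So for $t > 0$ small, $\widetilde C_1(t) = S_1(L_1 + t\,\Id_V)$ is congruent-up-to-sign to $S_1$... but not quite, because $L_1 + t\Id_V$ need not be self-adjoint. Instead I would use the \textsc{Invariance under Cogredience} property more carefully, or better: use homotopy invariance. Consider the homotopy $F(r,t) = S_1 L_1 + \big((1-r)t + r\,\mathrm{sgn}(t)\,\delta\big)S_1$ for a fixed small $\delta>0$, deforming the coefficient of $S_1$ while keeping it away from $0$ on $[\varepsilon',\varepsilon]$ for small $\varepsilon'$; but the cleanest route is the following direct eigenvalue count. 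Write $\iMor{\widetilde C_1(t)}$ for $t>0$ small: since $\widetilde C_1(t)\to S_1 L_1$ as $t\to 0^+$ and $\widetilde C_1$ is continuous, eigenvalues of $\widetilde C_1(t)$ near a nonzero eigenvalue of $S_1 L_1$ keep their sign, while eigenvalues emanating from the kernel of $S_1 L_1$ must be analysed. On $\ker(S_1 L_1)$, one shows — using that $S_1 L_1$ is self-adjoint and $\langle (S_1 L_1 + tS_1)x, x\rangle = t\langle S_1 x, x\rangle$ for $x\in\ker(S_1L_1)$ (this uses self-adjointness of $S_1L_1$ to kill cross terms at first order, à la the perturbation argument) — that the number of such eigenvalues becoming positive (resp. negative) is $\coiMor{S_1|_{\ker S_1 L_1}}$ (resp. $\iMor{S_1|_{\ker S_1 L_1}}$). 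Then
$$\spfl(\widetilde C_1(t),[0,\varepsilon]) = \iMor{\widetilde C_1(0)} - \iMor{\widetilde C_1(\varepsilon)} = \iMor{S_1L_1} - \Big(\iMor{S_1L_1|_{\mathrm{nonzero}}} + \iMor{S_1|_{\ker S_1L_1}}\Big),$$
and since $\iMor{S_1 L_1} = \iMor{S_1 L_1|_{\mathrm{nonzero}}}$, this collapses to $\iMor{S_1 L_1} - \iMor{S_1 L_1} - \iMor{S_1|_{\ker S_1 L_1}}$... which is not yet $\iMor{S_1 L_1} - \iMor{S_1}$. The discrepancy must be resolved by the nilpotency of $L_1$: I claim $\iMor{S_1} = \iMor{S_1 L_1|_{\mathrm{nonzero}}} + \iMor{S_1|_{\ker S_1 L_1}}$ is \emph{false} in general, so the decomposition of eigenvalues has to be organised differently — one should instead directly compare $\iMor{\widetilde C_1(\varepsilon)}$ with $\iMor{S_1}$ by noting $\widetilde C_1(\varepsilon) = \varepsilon S_1 + S_1 L_1$ and, for $\varepsilon$ large relative to $\|L_1\|$... but $\varepsilon$ is small. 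The correct organising principle is: $\widetilde C_1(\varepsilon)$ and $S_1$ are connected by the path $t\mapsto \widetilde C_1(t)$, $t\in[\varepsilon, R]$ for large $R$, along which — after rescaling — the spectral flow vanishes because $\tfrac{1}{t}\widetilde C_1(t) = S_1 + \tfrac1t S_1 L_1 \to S_1$ and this rescaled path has no crossings for $t$ large (eigenvalues of $S_1 + \tfrac1t S_1L_1$ are close to those of $S_1$, hence bounded away from $0$ off a compact $t$-interval, \emph{provided} $S_1$ is invertible — which it need not be!).

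\textbf{Handling $\ker S_1$.} The genuine subtlety, and what I expect to be the main obstacle, is that $S_1$ may be degenerate, so the naive "rescale and send $t\to\infty$" argument fails. The resolution: decompose $V$ further, $S_1$-orthogonally, using that $S_1 L_1$ is self-adjoint and $L_1$ is nilpotent — this is exactly the partial-signatures structure, and the whole point of the proposition is to reprove formula~\eqref{eq:sfaffine} (Item~3) \emph{without} partial signatures. So I would instead argue by induction on the nilpotency order $m$ of $L_1$. If $m=1$ then $L_1 = 0$, $\widetilde C_1(t) = tS_1$, and directly $\spfl(tS_1,[0,\varepsilon]) = \iMor{0} - \iMor{S_1} = -\iMor{S_1} = \iMor{S_1 L_1} - \iMor{S_1}$ since $S_1 L_1 = 0$; similarly $\spfl(tS_1,[-\varepsilon,0]) = \iMor{-S_1} - \iMor{0}\cdot$... wait, $\spfl(tS_1,[-\varepsilon,0]) = \iMor{-\varepsilon S_1} - \iMor{0} = \coiMor{S_1} = \coiMor{S_1} - \iMor{S_1 L_1}$, and $\spfl(tS_1,[-\varepsilon,\varepsilon]) = \iMor{-\varepsilon S_1}-\iMor{\varepsilon S_1} = \coiMor{S_1}-\iMor{S_1} = \sgn(S_1)$, establishing all three items in the base case. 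For the inductive step, on $V$ write $V = \ker S_1 \oplus (\ker S_1)^\perp$; since $SL$ (hence $S_1 L_1$) is self-adjoint and $S_1$-compatible, $L_1$ preserves a filtration adapted to this splitting, reducing the count to a strictly smaller nilpotent block plus an invertible $S_1$-block, to which Item~3 (in its invertible-$S_1$ form, i.e.\ \eqref{eq:sfaffine}, provable directly via cogredience as $\widetilde C_1(t) = S_1 + (\text{small})$ is congruent to $S_1$) applies; Items~1 and~2 for the full path then follow by concatenation and the additivity already proved. The trickiest bookkeeping will be verifying that the $S_1$-orthogonal splitting is genuinely $L_1$-invariant in the right way so the induction closes — this is where I would spend the most care, checking that $\langle S_1 L_1^{\ell} x, y\rangle = \langle S_1 x, L_1^\ell y\rangle$ (the analogue of~\eqref{eq:proprietaSL} for the pieces) forces the block-triangular structure needed.
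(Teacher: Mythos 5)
Your reduction of Item 1 to the vanishing of $\spfl(\widetilde C_2(t),[\alpha,\beta])$ is correct, but the way you obtain that vanishing is not: you claim $\widetilde C_2(0)=S_2L_2$ is invertible on $W$ because $\ker S\subseteq \ker L^m=V$, and nothing in the hypotheses gives this inclusion. The standing assumptions are only that $S$ is self-adjoint and $SL$ is self-adjoint, so $\ker S$ may perfectly well meet $W=\im L^m$: take $L=\Id$ (so $V=\{0\}$, $W=\H$) and any singular self-adjoint $S$; then $SL=S$ is self-adjoint, yet $\langle S\cdot,\cdot\rangle$ is degenerate on $W$, and your argument for nondegeneracy is circular because it invokes $\ker S\subseteq V$. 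Hence the Normalization property cannot be applied. What does work (and is the paper's route) is to factor $\widetilde C_2(t)=S_2(L_2+t\Id)$ and check that $\dim\ker\widetilde C_2(t)=\dim\ker S_2$ for all $|t|\le\varepsilon$: for $t\neq 0$ because $L_2+t\Id$ is invertible (it is $t\Id$ on $V$ and a small perturbation of the invertible $L|_W$ on $W$), and for $t=0$ because $\ker (S_2L_2)=L_2^{-1}(\ker S_2)$ with $\ker L_2=V\subset\ker S_2$. Constancy of the kernel dimension forbids eigenvalue crossings, so the spectral flow vanishes on every subinterval; invertibility is neither needed nor available.

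For Items 2 and 3 you correctly isolate the real obstruction (both $S_1$ and the starting point $S_1L_1$ may be degenerate), but you never get past it. The first-order count on $\ker(S_1L_1)$ fails precisely when $S_1$ is degenerate there (e.g. $S_1=\left(\begin{smallmatrix}0&1\\1&0\end{smallmatrix}\right)$, $L_1=\left(\begin{smallmatrix}0&1\\0&0\end{smallmatrix}\right)$); the rescaling argument needs $S_1$ invertible, as you note; and the closing induction on the nilpotency order is only a sketch whose inductive step is unproven: the splitting $V=\ker S_1\oplus(\ker S_1)^{\perp}$ is in general not $L_1$-invariant (take $S_1=\diag(0,1)$ and $L_1$ the standard $2\times 2$ nilpotent block), so the asserted reduction to ``smaller nilpotent block plus invertible $S_1$-block'' does not come for free, and the invertible-$S_1$ case you want to feed into the induction is not ``provable directly via cogredience since $\widetilde C_1(t)=S_1+\text{small}$'': near $t=0$ the path is close to the degenerate matrix $S_1L_1$, not to $S_1$, and the equality $\iMor{S_1L_1+tS_1}=\iMor{S_1}$ for small $t>0$ is exactly the nontrivial content of \eqref{eq:sfaffine} that the proposition is meant to reprove. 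The paper's proof avoids all of this with a single two-parameter homotopy $\widetilde h(r,t)=rS_1L_1+tS_1$ on $[0,1]\times[0,\varepsilon]$: since $L_1$ is nilpotent, $rL_1+\varepsilon\Id$ is invertible, so along the edge $t=\varepsilon$ the kernel of $\widetilde h(r,\varepsilon)=S_1(rL_1+\varepsilon\Id)$ has constant dimension $\dim\ker S_1$ and that edge carries no spectral flow; homotopy invariance over the rectangle then yields $\spfl(\widetilde C_1,[0,\varepsilon])=\spfl(tS_1,[0,\varepsilon])-\spfl(rS_1L_1,[0,1])=\iMor{S_1L_1}-\iMor{S_1}$, the interval $[-\varepsilon,0]$ is treated symmetrically, and Item 3 follows by concatenation. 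That homotopy is the idea missing from your proposal.
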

Before proving this result, we point out that, since $S_1L_1= \pi_1^*SL\pi_1$ and $S_1=\pi_1^*S\pi_1$, then we get 
\[
\iMor{S_1L_1}=\iMor{(SL)|_V} \quad \textrm{ and } \quad 
\iMor{S_1}=\iMor{S|_V}
\]
\begin{proof}
\begin{enumerate}
\item Clearly, it is enough to prove that there exists $\varepsilon>0$ such that 
\[
\spfl(\widetilde C_2(t), t \in  [-\varepsilon ,\varepsilon ])=0.
\]
Since $L_2$ is invertible and being invertible is an open condition, we can find $\varepsilon>0$ sufficiently small such that $L_2+ t \Id$ is invertible for $|t| \le \varepsilon$. Now the identity
$$\widetilde C_2(t)= S_2L_2+ t S_2= S_2\big(L_2 + t \Id\big)$$ 
implies that $\dim \ker \widetilde C_2(t)=\dim\ker S_2$ for every $t \in [-\varepsilon, \varepsilon]$. In particular, we get 
\[
\dim \ker \widetilde C_2(0)= \dim \ker S_2L_2= \dim \big( L_2^{-1}(\ker S_2)\big)= \dim \ker S_2
\]
since $\ker L_2=V\subset \ker S_2$. By the stratum homotopy invariance of the spectral flow we infer that $\spfl(\widetilde C_2(t), t \in  [\alpha, \beta])=0$ for every $ [\alpha, \beta]\subset [-\varepsilon ,\varepsilon ]$, as claimed. 
\item Let us consider the two parameters continuous family of self-adjoint operators
\[
\widetilde h:[0,1]\times [-\varepsilon ,\varepsilon ] \to \Bsa(\mathcal H), \qquad \widetilde h(r,t)\=rS_1L_1+ t S_1.
\] 
We observe that $rL_1 + \varepsilon \Id$ is invertible since $L_1$ is nilpotent. Hence 
\[
\ker \big(r S_1 L_1 + \varepsilon S_1\big)= \ker \big(S_1(r L_1+\varepsilon \Id)\big) 
\]
implies that $\dim \ker \widetilde h(r,\varepsilon)=\dim \ker S_1$ for every $r \in [0,1]$. The stratum homotopy invariance of the spectral flow yields now that 
\[
\spfl\big(\widetilde h(r,\varepsilon), r \in [0,1]\big)=0.
\]
Since the rectangle $[0,1] \times [0,\varepsilon]$ is contractible, the homotopy invariance of the spectral flow yields
$$\spfl \big (\widetilde h(0,t), t\in [0,\epsilon)\big ) -\spfl \big (\widetilde h(1,t), t \in [0,\epsilon)\big ) - \spfl \big (\widetilde h (r,0), r \in [0,1]\big ) =0$$
which can be equivalently rewritten as 
\[
\spfl\big(\widetilde C_1(t), t \in  [0 ,\varepsilon ]\big)=\spfl\big( tS_1, t \in  [0 ,\varepsilon ]\big) -\spfl\big( rS_1L_1,  r \in  [0 ,1 ]\big) 
\]
Since by definition we have $\spfl\big( tS_1, t \in  [0 ,\varepsilon ]\big)=-\iMor{S_1}$ and $\spfl\big( rS_1L_1,  r \in  [0 ,1 ]\big)=-\iMor{S_1L_1}$, the first formula follows. 
The proof of the second formula is completely analogous and will be omitted. 
\item Follows from Item 2 using the concatenation property of the spectral flow. \qedhere
\end{enumerate}
\end{proof}
	

\section{Detecting stability through the mod 2 spectral flow}
	\label{sec:stability-mod-2-sf}
	
The aim of this section is to investigate the relation intertwining the spectrum of a symmetric $2p\times 2p$-matrix $A$ and that of the corresponding Hamiltonian matrix $JA$, where $J$ is the standard complex structure in $\R^{2p}$. Even if in general it is extremely hard to relate the two spectra, we can provide via the spectral flow modulo 2 some conditions on the spectrum of $A$ which ensure that the spectrum of $JA$ be not purely imaginary. This information will be crucial in Section~\ref{sec:instability} for the study of the stability properties of (RE). 

In what follows we denote by $G:=i J$ the {\sc Krein matrix\/} and by $\langle G\cdot,\cdot\rangle$ the {\sc Krein form\/}. Our main reference for the general theory of the Krein form and its signature is \cite[Chapter 1]{Abb01}. 

	\begin{thm} \label{prop:mainB}
	Let $A\in M(2p\times 2p,\R)$ be a symmetric matrix such that the spectrum of $JA$ is purely imaginary, i.e. such that $\sigma(JA) \subset i\R$. Then  
		\[
	\iMor{A\big|_{\ker (JA)^{2p}}}
				\equiv\iMor{A}
				\quad \mod 2.
	\]

			\end{thm}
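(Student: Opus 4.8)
The plan is to connect the spectrum of $A$ to that of $JA$ via the mod-2 spectral flow of a suitable affine path of symmetric matrices, exploiting the hypothesis $\sigma(JA)\subset i\R$ to control the crossings. First I would consider the affine path $\widetilde C(t) \= -JA + t(-J)\cdot(\text{something})$; more precisely, set $S\=-J$ (which is symmetric, since $\trasp J = -J$ wait — here $J$ is the standard complex structure so $J^2=-I$ and $\trasp J=-J$, hence $-J$ is skew, not symmetric). The cleaner choice is to work with the Krein matrix $G=iJ$, which \emph{is} Hermitian, and the Hermitian operator $L\=$ something so that $GL$ is Hermitian; indeed taking $L$ to be $-iA$ gives $GL = iJ\cdot(-i)A = JA$, which is not Hermitian in general — but $S\=G$, with $SL$ meant in the sense of Proposition~\ref{lem:homotopia-zero} (where $S$ is self-adjoint, $L$ need not be, only $SL$ must be self-adjoint), is exactly the framework already set up in Section 3. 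So I would apply that machinery to $S\=iJ$ and $L$ chosen so that $SL = $ the symmetric operator $A$ up to sign: concretely $L\=(iJ)^{-1}A = -iJA$, so that $\widetilde C(t)=SL+tS = A + t\, iJ = A - tG\cdot(-1)$, an affine path of Hermitian matrices through $A$.

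The key point is then a \emph{symmetry argument on the spectral flow}. The path $\widetilde C(t) = A + tiJ$ has the feature that $\widetilde C(-t) = A - tiJ = \overline{\widetilde C(t)}$ is the complex conjugate (equivalently, $\widetilde C(-t) = \trasp{\widetilde C(t)}$ after noting $A$ real symmetric and $\trasp{(iJ)}=-iJ$... modulo checking the exact conjugation). Because complex conjugation is an $\R$-linear isometry of $\H$ that reverses the sign of eigenvalue motion in a controlled way, the spectral flow over $[-\varepsilon,\varepsilon]$ of such a conjugation-symmetric path is forced to be \emph{even} at every crossing that is not at $t=0$: eigenvalues cross $0$ in conjugate pairs, contributing an even number, unless the eigenvalue is real, which corresponds precisely to $iJA$ having a nonzero real eigenvalue — excluded by the hypothesis $\sigma(JA)\subset i\R$ (equivalently $\sigma(iJA)\subset\R$, but the relevant crossings of $\widetilde C$ occur at $t=1/\mu$ where $\mu$ runs over eigenvalues of $-C^{-1}A$-type data; I need $\sigma(JA)\subset i\R$ to guarantee no crossing in $(0,\infty)$ forces an odd jump). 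Thus, modulo 2, the only possibly-odd contribution to $\spfl(\widetilde C(t),t\in[-\varepsilon,\varepsilon])$ is the one at $t=0$, which by Item 3 of Proposition~\ref{lem:homotopia-zero} equals $\sgn(S_1)=\sgn\big((iJ)|_{\ker L^m}\big)$ where $V=\ker L^m=\ker(JA)^{2p}$ is the generalized kernel; and by Item 2, the two one-sided contributions are $\iMor{S_1L_1}-\iMor{S_1}$ and $\coiMor{S_1}-\iMor{S_1L_1}$, whose values tie together $\iMor{A|_{\ker(JA)^{2p}}}$ (this is $\iMor{(SL)|_V}$ by the remark after the Proposition) and $\iMor{A}$ (the index of $SL$ on all of $\H$, reached by running $t$ to $+\infty$ where $\widetilde C(t)\approx tiJ$ has a fixed signature, subtracting the total spectral flow on $(0,\infty)$ which is even by the hypothesis).

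Assembling: $\iMor{A} - \iMor{A|_{\ker(JA)^{2p}}}$ equals the spectral flow of $\widetilde C$ from $0^+$ to $+\infty$ (on the $V$-part, with the $W$-part contributing nothing near $0$ and an even amount — or zero — elsewhere under the purely-imaginary hypothesis), and the conjugation symmetry forces that net flow to be even. Hence $\iMor{A|_{\ker(JA)^{2p}}}\equiv \iMor{A}\bmod 2$. The main obstacle I anticipate is making the conjugation-symmetry step rigorous: I must verify that the relevant antilinear involution genuinely conjugates the path $\widetilde C(t)$ to $\widetilde C(-t)$ (keeping careful track of where $J$, $i$, and the reality of $A$ enter), and that under the hypothesis $\sigma(JA)\subset i\R$ every crossing instant in $(0,+\infty)$ has a generalized eigenspace on which the Krein form $\langle iJ\,\cdot,\cdot\rangle$ has \emph{even} signature — this is the Krein-form analogue of "purely imaginary eigenvalues come in pairs with opposite Krein sign", and it is precisely the place where the counterexample of \cite{DZ21} shows one cannot drop the restriction to $\ker(JA)^{2p}$, i.e. where the Jordan block at $0$ must be isolated from the count.
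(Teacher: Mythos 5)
Your setup coincides with the paper's: the path is $D(t)=A+tG$ with $G=iJ$, and the crossing at $t=0$ is handled via Proposition~\ref{lem:homotopia-zero} with $S=G$ (modulo a sign slip: you need $L=GA$, so that $SL=G^2A=A$; your choice $L=-iJA$ gives $SL=-A$). The genuine gap is in your treatment of the crossings in $(0,+\infty)$. Your key claim --- that under $\sigma(JA)\subset i\R$ every positive crossing contributes an even amount, equivalently that the Krein form has even signature on each $\H_{t_*}$, so that the net flow on $(0,+\infty)$ is even --- is false. Take $A=\Id_{2p}$: then $\sigma(JA)=\{\pm i\}$ is purely imaginary, $D(t)=\Id+tiJ$ has eigenvalues $1\pm t$ each of multiplicity $p$, and the unique positive crossing at $t_*=1$ has Krein signature $\pm p$, odd whenever $p$ is odd; the flow on $[\eps,T]$ equals $-p$, not an even number. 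The conjugation symmetry $D(-t)=\overline{D(t)}$ only pairs a crossing at $t_*$ with one at $-t_*$; it imposes no parity constraint on an individual positive crossing, and on the interval $[\eps,T]$ where all the nontrivial crossings sit it gives nothing. You have also misplaced where the hypothesis enters: it does not prevent ``odd jumps'' on $(0,\infty)$ (all crossings there correspond precisely to the purely imaginary eigenvalues, and there are many of them); rather, it guarantees that no spectrum is missed off the imaginary axis.

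What the argument actually needs, and what your proposal is missing, is the mod~$2$ bookkeeping that converts the hypothesis into a dimension count: each positive crossing contributes $\pm\sgn\bigl(\langle G\,\cdot,\cdot\rangle|_{\H_{t_*}}\bigr)\equiv\dim\H_{t_*}\ (\mathrm{mod}\ 2)$ (by nondegeneracy of the Krein form on $\H_{t_*}$, not evenness of its signature), and since $\sigma(JA)\subset i\R$ the spaces $\H_{t_*}$, their conjugates and $\H_0$ exhaust $\C^{2p}$, giving $2\sum_{t_*>0}\dim\H_{t_*}+\dim\H_0=2p$. Hence $\spfl(D,[\eps,T])\equiv p-\tfrac{1}{2}\dim\H_0$ mod~$2$, while $n^-(D(T))=p$ and the $t=0$ analysis gives $\spfl(D,[0,\eps])=\iMor{A|_{\ker(JA)^{2p}}}-\tfrac{1}{2}\dim\H_0$; combining these yields the claim. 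Note also that your assembly step ``$\iMor{A}-\iMor{A|_{\ker(JA)^{2p}}}$ equals the flow from $0^+$ to $+\infty$'' is off by exactly $\tfrac{1}{2}\dim\H_0-p$, which is precisely the discrepancy the dimension count resolves. So the skeleton of your proof is the paper's, but the step where the hypothesis is actually used is replaced by an incorrect evenness claim, and as written the proof does not go through.
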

			
\begin{rem}
\label{rmk:nocounterexample}
The kernel of $JA$ is clearly equal to the kernel of $A$, being $J$ invertible. However, this is in general not the case for the corresponding generalized eigenspaces. Indeed, 
$\ker A = \ker A^{2p}$ since $A$ is diagonalizable, but in general $\ker JA \subsetneq \ker (JA)^{2p}$. Take for instance $p=3$ and
$$A=\diag (-2,-1,1,-1,0,0).$$
It is readily seen that the characteristic polynomial of $JA$ is 
$$p_{JA}(t) = t^4 (t^2+2),$$
which means that the generalized eigenspace $\ker (JA)^{6}$ has dimension 4. Moreover, the spectrum of $JA$ is purely imaginary and $n^-(A)=3$.
Such an example appears in \cite{DZ21} and provides a counterexample to the following statement claimed in \cite[Theorem 3.11]{BJP14}: 

\begin{center}
\textit{If the spectrum of $JA$ is purely imaginary, then $n^{-}(A)$ is even.}
\end{center}

In Theorem~\ref{prop:mainB} above we fix the gap in the proof of the aforementioned theorem in \cite{BJP14} by taking into account the contribution of the generalized eigenspace corresponding to the eigenvalue zero. 

We shall finally notice that the example above does not contradict Theorem~\ref{prop:mainB}. Indeed, writing $A=\diag (A_1,A_2)$ with 
$$A_1 = \diag (-2,-1,1), \quad A_2 = \diag (-1,0,0),$$
we see that 
$$(JA)^2 =\diag (-A_1A_2, -A_1A_2),$$
and hence 
\begin{align*}
\ker (JA)^{6}=\ker (JA)^2
					&= \text{span}\, \{e_2,e_3,e_5,e_6\},\end{align*}
					where $\{e_j\}$ denotes the standard basis of $\R^{6}$. From this we conclude that 
					$$n^{-} \Big (A\Big |_{\ker (JA)^{6}}\Big ) = 1,$$
					and hence the condition $n^-\Big (A\Big |_{\ker (JA)^{6}}\Big )\equiv n^-(A)$ modulo two is satisfied. 
 \qed
\end{rem}

	\begin{proof}
		Let $D:[0,+\infty) \to \R^{2p}$ be the affine path of self-adjoint matrices pointwise defined by
			\[
				D(t) = A +tG = -J(JA - itI).
			\]
		Since $-J$ is invertible, we have
			\[
				\ker D(t) = \ker (JA - itI) \qquad \forall\,  t \in [0, +\infty).
			\]
			In particular, $t_* \in [0, +\infty)$ is a crossing instant for $D$ if and only if
				$it_* \in \sigma(JA)$. In other words, 
there is a bijection between the set of non-negative crossing instants of $D$ and the set of purely imaginary eigenvalues of $JA$ with non-negative imaginary part. As already observed the crossing instants are isolated since $D$ is affine (hence real-analytic).
					
		Let us first examine the strictly positive crossings. For a crossing instant $t_*\in (0,+\infty)$ we can find $\varepsilon >0$ such that $[t_*-\varepsilon,t_*+\varepsilon]$ does not contain 
		any other crossing instant and 
 		\begin{equation}\label{eq:Krein}
			\spfl \bigl( D(t), t\in [t_* - \eps, t_* + \eps] \bigr) = \sgn \Big (\langle G\, \cdot, \cdot \rangle \big |_{ \H_{t_*}}\Big ),
			\end{equation}
		 where $\H_{t_*}:=\ker (GA+ t_* I)^{2p}$. Indeed, the identity
		\[
		D(t) = A+tG= G(GA+t_* \Id)+(t-t_*)G,
		\]
		implies that 
		\[
		\spfl \bigl( D(t), t \in  [t_* - \eps, t_* + \eps] \bigr) = \spfl ( \widetilde{D}(t), t \in  [-\varepsilon,\varepsilon]),
		\]
		where $\widetilde{D}(t):=D(t+t_*)$, and the claim follows from Proposition~\ref{lem:homotopia-zero}, Item 3, setting  $S=G$ and $L =GA+t_*\Id$. 
	        Now, \eqref{eq:Krein} implies that 
			\begin{equation} \label{eq:sfisolcross}
				\spfl \bigl( D(t), t \in [t_* - \varepsilon, t_* + \varepsilon] \bigr) \equiv \dim \H_{t_*} \quad \mod 2,
			\end{equation}
			as it is well-known that the restriction $\langle G\, \cdot, \cdot \rangle|_{\H_{t_*}}$ is non-degenerate (for further details we refer to \cite[Chapter 1]{Abb01}).

	Clearly, we can find $\bar T>0$ such that all crossing instants are contained in $[0,\bar T)$. In particular, the additivity property of the spectral flow, together with
	the fact that zero is an isolated crossing instant and the fact that $n^-(D(T)) = n^{-}(G)=p$ for all $T\ge \bar T$, implies that 
	\begin{equation}
	\spfl \big (D(t), t\in [\epsilon, T] ) = n^-(D(\varepsilon))- p\equiv \sum \dim \mathcal H_{t_*}  \qquad \text{mod} \ 2, \quad \forall \, T\ge  \bar T,
	\label{eq:finalespfl2}
	\end{equation}
	where the sum is taken over all positive crossing instants and $\varepsilon>0$ is chosen in such a way that $[0,\varepsilon]$ does not contain any crossing instant other than $t_*=0$.
		
		When turning our attention to the crossing instant $t_*=0$, we notice that applying Item 2 of Proposition~\ref{lem:homotopia-zero} to the path $t\mapsto D(t)$ on $[0, \varepsilon]$ we obtain
			\begin{equation}\label{eq:finalespfl4}
			\begin{split}
			\spfl \bigl( D(t), t\in [0, \varepsilon] \bigr)&= n^-\bigl(A\big|_{\ker (JA)^{2p}}\bigr)-n^-\bigl(G\big|_{\ker (JA)^{2p}}\bigr) = n^-\bigl(A\big|_{\ker (JA)^{2p}}\bigr)-\dfrac{\dim \mathcal H_0}{2},
			\end{split}
						\end{equation}
			 where the second equality follows by the general theory on Krein forms (notice that $\mathcal H_0$ has even dimension). By the very definition of the spectral flow this implies that
			\begin{equation}
			\begin{split}
				\iMor{A}- \iMor{D(\varepsilon)}=\iMor{A\big|_{\ker (JA)^{2p}}}-\dfrac{\dim \mathcal H_0}{2}
			\end{split}
			\end{equation}
which is equivalent to saying that 
\begin{equation}\label{eq:92}
\iMor{D(\varepsilon)}= \iMor{A}-	n^-\bigl(A\big|_{\ker (JA)^{2p}}\bigr)+\dfrac{\dim \mathcal H_0}{2}.
\end{equation}
Comparing~\eqref{eq:finalespfl2} and~\eqref{eq:92} we obtain
			\begin{equation}\label{eq:112}
				\spfl \bigl( D(t), [\eps,T] \bigr) \equiv \iMor{D(\eps)}-p\equiv \iMor{A} -	n^-\bigl(A\big|_{\ker (JA)^{2p}}\bigr) +
 \dfrac{\dim \H_0}{2}-p\quad \mod 2
			\end{equation}
On the other hand, we have
			\[
				2p = \quad 2\ \sum \ \dim \H_{t_*} + \dim \H_0,
			\]
which can be equivalently rewritten as
			\begin{equation}\label{eq:9piu2}
				p - \dfrac{\dim \H_0}{2} = \quad\ \sum\ \dim \H_{t_*}.
			\end{equation}
			As above, the sums are here taken over all positive crossing instants. 
		Comparing  \eqref{eq:finalespfl2} and \eqref{eq:9piu2} yields
			\begin{equation}\label{eq:102}
				\spfl \bigl( D(t), [\eps,T] \bigr) \equiv p - \dfrac{\dim \H_0}{2} \equiv \frac{\dim \H_0}{2} -p \quad \mod 2.
			\end{equation}
					Finally, putting  \eqref{eq:112} and \eqref{eq:102} together  we conclude that 
			\[
				\iMor{A}-	n^-\bigl(A\big|_{\ker (JA)^{2p}}\bigr) 
				\equiv 0 
				\quad \mod 2
			\]
		as claimed. 
		 \end{proof}

As an immediate consequence of Theorem~\ref{prop:mainB} we obtain the following

\begin{cor}
\label{cor:mainB}
Let $A\in M(2p\times 2p,\R)$ be a symmetric matrix such that $\sigma (JA)\subset i \R$. If there are no non-trivial Jordan blocks for $JA$ corresponding to the eigenvalue zero (in particular, if $JA$ is diagonalizable), then
$$n^0(A) \equiv n^-(A) \equiv 0 \quad \mod \ 2.$$
\label{cor:mainB}
\end{cor}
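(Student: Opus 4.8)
The plan is to read the Corollary off from Theorem~\ref{prop:mainB} by letting the hypothesis on the Jordan structure of $JA$ act on the two quantities compared there.

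First I would record what the assumption gives. Since $J$ is invertible we have $\ker(JA)=\ker A$, and the ascending chain $\ker(JA)\subseteq\ker(JA)^2\subseteq\cdots$ stabilises to the generalised eigenspace $\H_0$ of $JA$ at the eigenvalue $0$. The hypothesis that $JA$ has no non-trivial Jordan block at $0$ (which holds a fortiori if $JA$ is diagonalisable) says precisely that this chain is already stationary at the first step, i.e. $\ker(JA)^{2p}=\ker(JA)=\ker A=\H_0$. Consequently $A\big|_{\ker(JA)^{2p}}=A\big|_{\ker A}=0$, so $\iMor{A\big|_{\ker(JA)^{2p}}}=0$; feeding this into Theorem~\ref{prop:mainB} yields $\iMor{A}\equiv 0$ modulo $2$ at once.

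It then remains to see that $\nullity{A}$ is even. I would use that $\dim\H_0$ is even — this is exactly the fact already invoked in the proof of Theorem~\ref{prop:mainB}; alternatively it follows from the elementary remark that $JA$ is a Hamiltonian matrix on $\R^{2p}$, so its characteristic polynomial is even and hence $0$ is a root of even algebraic multiplicity. Under our hypothesis $\H_0=\ker A$, so $\nullity{A}=\dim\ker A=\dim\H_0$ is even, which is the last assertion.

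I do not foresee a real obstacle: the Corollary is a straightforward specialisation of Theorem~\ref{prop:mainB}. The only point requiring a little care is the logical one — the ``even dimension'' statement concerns the \emph{algebraic} multiplicity of the zero eigenvalue of $JA$, and it is precisely the no-Jordan-block hypothesis that legitimises replacing $\dim\H_0$ by $\nullity{A}$ and $\iMor{A\big|_{\ker(JA)^{2p}}}$ by $\iMor{A\big|_{\ker A}}=0$ in the formula of Theorem~\ref{prop:mainB}.
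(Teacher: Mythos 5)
Your proposal is correct and follows essentially the same route as the paper's own proof: the no-Jordan-block hypothesis gives $\ker(JA)^{2p}=\ker(JA)=\ker A$, so $\iMor{A\big|_{\ker(JA)^{2p}}}=\iMor{A\big|_{\ker A}}=0$ and Theorem~\ref{prop:mainB} yields $\iMor{A}\equiv 0 \bmod 2$, while the evenness of $\nullity{A}$ comes from the even algebraic multiplicity of the zero eigenvalue of the Hamiltonian matrix $JA$. Your extra remark distinguishing algebraic from geometric multiplicity is exactly the point the paper's brief proof relies on implicitly.
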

\begin{proof}
By assumption  $\ker(JA)^{2p}=\ker(JA)= \ker A$. In particular, $n^0(A)=n^0(JA)$ is even by the spectral properties of Hamiltonian matrices,
$$n^- \Big ( A\Big |_{\ker (JA)^{2p}} \Big ) = \iMor{A\big|_{\ker A}}=0,$$ 
and hence
$$n^-(A) \equiv \iMor{A\big|_{\ker A}}= 0 \quad \text{mod} \ 2$$
as claimed.
			 \end{proof}

In the particular case in which $JA$ is invertible and diagonalizable, Corollary~\ref{cor:mainB} is proved in \cite{HS09}. 
Under such assumptions the proof is much less involved as all crossing instants are regular, 
and hence the local contributions to the spectral flow can be easily computed using crossing forms. 

\begin{rem}
Corollary~\ref{cor:mainB} implies that if $A$ is a symmetric matrix with odd dimensional kernel such that $JA$ is spectrally stable (i.e. $\sigma (JA)\subset i\R$), then the generalized eigenspsace $\ker (JA)^{2p}$ must be strictly bigger than $\ker JA$. In other words, the Jordan block structure of $JA$ corresponding to the eigenvalue zero is non-trivial and hence in particular $JA$ is linearly unstable. As a consequence, an Hamiltonian matrix $JA$ defined by a symmetric matrix $A$ having odd-dimensional kernel can never be linearly stable.

On the other hand, we can easily construct examples of spectrally stable $(JA)$ such that $A$ has odd-dimensional kernel. Consider for instance the matrix 
$$A= \diag (-2,1,1,-1,1,0).$$
Since
$$(JA)^2 = \diag (-2,-1,0,-2,-1,0)$$
we have that $\sigma (JA) = \{0,\pm i,\pm i\sqrt{2}\}$, in particular $JA$ is spectrally stable. Notice that the condition 
$$n^-\Big (A|_{\ker (JA)^6}\Big ) \equiv n^-(A) \quad \text{mod}\ 2$$
is satisfied since $n^-(A)=2$ and $\displaystyle n^-\Big (A|_{\ker (JA)^6}\Big )=0$. 
\qed
\end{rem}


%

%

%

\section{Linear and spectral instability}
\label{sec:instability}

Given a central configuration $q$ we consider the associated (RE) $\gamma$ and the corresponding linear autonomous Hamiltonian system given by
\begin{equation}\label{eq:lin-ham-sys}
	\dot \zeta(t) = -JB_*\zeta(t)
\end{equation}
where $B_*$ stands for the symmetric matrix $B_d$ defined in Equation~\eqref{eq:Ham-lin-2} respectively for the symmetric matrix $B_s$ defined in Equation~\eqref{eq:Ham-lin-pl-2} according whether the (RE) $\gamma$ is non-planar or planar.  The fundamental solution of~\eqref{eq:lin-ham-sys} can be written explicitly as 
\[
\gamma(t)\= \exp( -t JB_*).
\]

 As already observed, the invariance of Newton's equation~\eqref{eq:Newton-cpt} under translations 
and rotations yields to several integral of motions. As shown in Section~\ref{subsec:decomposition}, using such integral of motions it is possible to provide a symplectic decomposition
 of the phase space into symplectic subspaces which are invariant for the linearized Hamiltonian dynamics in~\eqref{eq:lin-ham-sys}, thus ruling out the somehow uninteresting part of the dynamics 
 due to the symmetries. Indeed, after the symplectic change of coordinates given in Lemma~\ref{thm:new-coordinates}, the matrix $B_*$ can be decomposed as the direct sum 
\[
B_*\,= B_1^*\, \oplus B_2^*\, \oplus B_3^* ,
\]
where  $B_j^*\=B\,|_{E_j^*}$, $j=1,2,3$.
Under the notation above, the symmetric matrix $B_3^*$ detects the stability properties of the (RE) $\gamma$ and takes the form
\[
B_3^*\, = \begin{pmatrix}
	-\mathcal D & \trasp{K_*}\\
	K_* & \Id
\end{pmatrix}, \qquad \mathcal D= \big[\trasp{A} D^2 U(Q) A\big]\Big|_{\R^{4n-4-k}},
\]
where $k=4 $ or $k=6$ depending on whether the (RE) is non-planar or planar but generated by a collinear central configuration, or planar and generated by a planar non-collinear central configuration, and where $Q$ is the central configuration (in the rotating frame) generating the (RE).
For this reason, we are allowed to call the (RE) $\gamma$ {\sc spectrally stable} if the Hamiltonian matrix $JB_3^*$ is spectrally stable, that is if its spectrum is purely imaginary, and {\sc linearly stable} if 
$JB_3^*$ is linearly stable, namely spectrally stable and diagonalizable. 

As shown in Section~\ref{subsec:Morse-Bott}, the matrix $B_3^*$ is similar to the block-diagonal matrix. It is easy to check that
\begin{equation}\label{eq:NsimB}
N_3^*:=
\begin{pmatrix}
-\big[\mathcal D + U(Q)\big]	 & 0 \\
 0 & \Id 
\end{pmatrix},
\end{equation}
with the matrix $\mathcal D + U(Q)$ having the same inertia indices as $H(Q)\big|_{\R^{4n-4-k}}$. 
We are now ready to state and prove the first main result of this section. 

\begin{thm}\label{thm:main-1}
Let $\gamma$ be the (RE) generated by a central configuration $q$. If:
\begin{itemize}
\item $\gamma$ is either non-planar or planar and $q$ is collinear, and we have
\[
\iMor{B_3^d\big|_{\ker (JB_3^d)^{8n-16}}} 
				-\iMor{B_3^d}\equiv 1 
				\quad \mod 2,
\]
\item or $\gamma$ is planar and $q$ is planar non-collinear, and we have
 \[
\iMor{B_3^s\big|_{\ker (JB_3^s)^{8n-20}}} 
				-\iMor{B_3^s}\equiv 1 
				\quad \mod 2,
\]
\end{itemize}
then $\gamma$  is spectrally unstable. 
\end{thm}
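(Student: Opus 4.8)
The plan is to obtain Theorem~\ref{thm:main-1} as an immediate consequence of Theorem~\ref{prop:mainB}, applied to the symmetric matrix $B_3^*$, by a contradiction argument. Suppose, contrary to the conclusion, that $\gamma$ is spectrally stable. By the very definition of spectral stability adopted in this section, this means that the Hamiltonian matrix $JB_3^*$ has purely imaginary spectrum, i.e. $\sigma(JB_3^*)\subset i\R$, where $*=d$ in the first case of the statement and $*=s$ in the second. Recall that $B_3^*$ is a real symmetric matrix acting on the symplectic subspace $E_3^*$, whose dimension was computed in Section~\ref{subsec:decomposition} to be $2p=8n-16$ when $\gamma$ is non-planar, or planar but generated by a collinear central configuration, and $2p=8n-20$ when $\gamma$ is planar and generated by a planar non-collinear central configuration. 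In particular the exponent appearing in the hypothesis is exactly $2p=\dim E_3^*$, so that $\ker (JB_3^*)^{8n-16}$ (resp. $\ker(JB_3^*)^{8n-20}$) is precisely the generalized null eigenspace $\ker (JB_3^*)^{2p}$ of $JB_3^*$.

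Next I would apply Theorem~\ref{prop:mainB} with $A=B_3^*$, which is legitimate since $2p$ is even and $\sigma(JB_3^*)\subset i\R$ by assumption. This yields
\[
\iMor{B_3^*\big|_{\ker (JB_3^*)^{2p}}}\equiv \iMor{B_3^*}\quad \mod 2 ,
\]
that is $\iMor{B_3^*|_{\ker (JB_3^*)^{2p}}}-\iMor{B_3^*}\equiv 0 \pmod 2$. This directly contradicts the standing hypothesis of the theorem, in which the very same difference is assumed to be $\equiv 1 \pmod 2$. Hence the assumption that $\gamma$ be spectrally stable is untenable, and $\gamma$ must be spectrally unstable, as asserted. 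The argument is identical in the two cases, the only difference being the value of $2p=\dim E_3^*$.

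There is no genuine obstacle at the level of Theorem~\ref{thm:main-1} itself: the whole substance has already been isolated in Theorem~\ref{prop:mainB} (and, upstream of it, in the spectral-flow formula of Proposition~\ref{lem:homotopia-zero} for the affine path $D(t)=B_3^*+tG$, $G=iJ$, with possibly degenerate starting point $D(0)=B_3^*$, together with the non-degeneracy of the Krein form on each generalized eigenspace). The one point that deserves a word of care is purely bookkeeping, namely matching the exponent $8n-16$ resp. $8n-20$ in the hypothesis with the size $2p$ of the matrix $B_3^*$; this is exactly what the explicit dimension count $\dim E_3^d=8n-16$ and $\dim E_3^s\in\{8n-16,\,8n-20\}$ from Section~\ref{subsec:decomposition} supplies, so in practice no further verification is needed.
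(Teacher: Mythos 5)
Your proposal is correct and follows essentially the same route as the paper: argue by contradiction that $\gamma$ is spectrally stable, apply Theorem~\ref{prop:mainB} to $A=B_3^*$ with $2p=\dim E_3^*$ ($8n-16$ resp.\ $8n-20$), and obtain the parity $\equiv 0 \pmod 2$ that contradicts the hypothesis. The paper's proof is exactly this contradiction argument (stated for the first case, with the second declared analogous), so no further comment is needed.
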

\begin{proof}
We prove the claim only when the first condition holds, being the other proof completely analogous. 		Let $\H \= \C^{8n-16}$ be the complexification of $\R^{8n-16}$, and define 
the path $D_3 : [0, +\infty) \to \Bsa(\H)$ as
			\[
				D_3(t) \= B_3^d + tG
			\]
		with $G \= iJ$ denotes the Krein matrix. Arguing by contradiction, we assume that $\gamma$  is spectrally stable. Applying Theorem~\ref{prop:mainB} to the path $t \mapsto D_3(t)$ (with $p=4n-8$),       we get that 
	\[
	n^-\bigl(B_3^d\big|_{\ker (JB_3^d)^{8n-16}}\bigr) 
				-\iMor{B_3^d}\equiv 0 
				\quad \mod 2.
	\]
This completes the proof. 
	\end{proof}

In the following theorem we collect some easy consequences of Theorem~\ref{thm:main-1} which relates the linear and spectral stability of a (RE) 
with the inertia indices of the corresponding central configuration. 
	
\begin{thm} \label{thm:main-2}
Let $q$ be a central configuration, and let $\gamma$ be the corresponding (RE). Then the following hold:
\begin{enumerate}
\item If $q$ is non-planar and Morse-Bott non-degenerate, or arbitrary and Morse-Bott degenerate with $n^0(q)$ even, then $\gamma$ is linearly unstable. 
\item Suppose that 
$$n^-\Big (B_3^* \Big |_{\ker (JB_3^*)^{8n-8-2k}}\Big )\equiv 0 \quad \text{mod}\ 2,$$
where $*$ and $k$ are as in Section~\ref{subsec:decomposition}. If 
$$n^-(q) + n^0(q) \equiv 0 \quad \text{mod}\ 2,$$
then $\gamma$ is spectrally unstable. In particular, if $q$ is Morse-Bott non-degenerate then:
\begin{enumerate}
\item If $q$ is non-planar and $n^-(q)$ is even, then $\gamma$ is spectrally unstable. 
\item If $q$ is planar and $n^-(q)$ is odd, then $\gamma$ is spectrally unstable.
\end{enumerate}
\end{enumerate}
\end{thm}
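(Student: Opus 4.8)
The plan is to deduce Item~1 from Corollary~\ref{cor:mainB} (more precisely, from the remark following it, to the effect that a Hamiltonian matrix $JA$ with $A$ symmetric and $\ker A$ odd-dimensional is never linearly stable) and Item~2 from Theorem~\ref{thm:main-1}, feeding into both the inertia bookkeeping for $B_3^*$ already carried out in Subsection~\ref{subsec:Morse-Bott}. The one new ingredient is a short parity computation relating the inertia indices of $B_3^*$ to those of the Hessian $H(q)$.

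For Item~1 I would recall from Subsection~\ref{subsec:Morse-Bott} that, after the symplectic change of coordinates of Lemma~\ref{thm:new-coordinates}, $B_3^*$ is similar to the block matrix $N_3^*$ of~\eqref{eq:NsimB}, whose non-identity block $\mathcal D+U(q)$ has the same inertia indices as the restriction of $H(q)$ to the $(4n-4-k)$-dimensional subspace $V_3\subset T_q\mathbb S$ spanned by the last $4n-4-k$ columns of the matrix $A$. This $V_3$ is the $M$-orthogonal complement in $T_q\mathbb S$ of a $(k-1)$-dimensional space of infinitesimal $\SO(4)$-rotations of $q$, all of which lie in $\ker H(q)$ by $\SO(4)$-invariance of $\widehat U$; hence $V_3$ is $H(q)$-invariant and $\nullity{B_3^*}=\nullity{\mathcal D+U(q)}=\nullity{H(q)\big|_{V_3}}=\nullity{q}-(k-1)$. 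Since $k-1\in\{3,5\}$ is always odd, $\nullity{B_3^*}\equiv\nullity{q}+1\mod 2$. In the first alternative $q$ is non-planar and Morse-Bott non-degenerate, so $\nullity{q}=\dim T_q(\SO(4)\cdot q)=6$; in the second, $\nullity{q}$ is even by hypothesis. Either way $\nullity{B_3^*}$ is odd, so by the remark following Corollary~\ref{cor:mainB} the Hamiltonian matrix $JB_3^*$ is not linearly stable, i.e.~$\gamma$ is linearly unstable.

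For Item~2 I would use the same similarity $B_3^*\sim N_3^*$ together with the fact that passing from $H(q)$ to $H(q)\big|_{V_3}$ only discards $k-1$ directions lying in $\ker H(q)$, to obtain
\[
\iMor{B_3^*}=\coiMor{\mathcal D+U(q)}=\coiMor{H(q)\big|_{V_3}}=\coiMor{H(q)}=(4n-5)-\iMor{q}-\nullity{q},
\]
where I used $\dim T_q\mathbb S=4n-5$. As $4n-5$ is odd, the hypothesis $\iMor{q}+\nullity{q}\equiv 0\mod 2$ is equivalent to $\iMor{B_3^*}$ being odd; combined with the standing assumption $\iMor{B_3^*\big|_{\ker(JB_3^*)^{8n-8-2k}}}\equiv 0\mod 2$ and the identity $8n-8-2k=\dim E_3^*$, this gives
\[
\iMor{B_3^*\big|_{\ker(JB_3^*)^{8n-8-2k}}}-\iMor{B_3^*}\equiv 1\mod 2,
\]
which is exactly the hypothesis of Theorem~\ref{thm:main-1}; hence $\gamma$ is spectrally unstable. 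For the two special cases, when $q$ is Morse-Bott non-degenerate one has $\nullity{q}=\dim T_q(\SO(4)\cdot q)$, equal to $6$ if $q$ is non-planar and to $5$ or $3$ if $q$ is planar, so $\iMor{q}+\nullity{q}$ is even precisely when $\iMor{q}$ is even (non-planar) or odd (planar), which yields (a) and (b). I would also note that the first assumption of Item~2 holds automatically when $q$ is planar and Morse-Bott non-degenerate and generates a planar (RE), since then $\nullity{B_3^s}=\nullity{q}-(k-1)=0$, so $\ker(JB_3^s)^{8n-8-2k}=\{0\}$ and the index appearing there vanishes.

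The conceptual work — extracting a mod~$2$ obstruction to spectral stability from an inertia-index parity via the spectral flow — is already done in Theorem~\ref{prop:mainB} and Theorem~\ref{thm:main-1}, so the hard part is not here. The only point requiring care is the inertia bookkeeping: one must check, uniformly over the distinct geometric configurations (planar or non-planar $\gamma$; collinear, planar non-collinear, or non-planar $q$), that the $k-1$ directions removed by the symplectic decomposition are exactly $\SO(4)$-orbit directions of $q$ lying in $\ker H(q)$, and that the identity block of $N_3^*$ contributes nothing to $\iMor{B_3^*}$ and to $\nullity{B_3^*}$. Both facts are already recorded in the dimension counts of Subsection~\ref{subsec:Morse-Bott}, so what remains is just to read off the parities.
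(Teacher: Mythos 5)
Your proposal is correct and follows essentially the same route as the paper: Item 1 via the parity count $n^0(B_3^*)=n^0(q)-(k-1)$ being odd, which forces a non-trivial Jordan block of $JB_3^*$ at the eigenvalue zero, and Item 2 via the inertia identity $\iMor{B_3^*}=\coiMor{q}=4n-5-\iMor{q}-\nullity{q}$ fed into Theorem~\ref{thm:main-1}. The only cosmetic differences are that you apply Theorem~\ref{thm:main-1} directly instead of arguing by contradiction and that you spell out the bookkeeping (discarded directions being orbit directions in $\ker H(q)$) a bit more explicitly than the paper does.
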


\begin{rem}
The assumption in Item 2 of Theorem~\ref{thm:main-2} is satisfied for instance if:
\begin{itemize}
\item $q$ is a Morse-Bott non-degenerate planar central configuration generating a planar (RE) $\gamma$. 
\item $q$ is a degenerate planar central configuration and the Hamiltonian matrix $JB_3^*$ (where $*=d$ if $\gamma$ is non-planar and $*=s$ otherwise) 
does not have non-trivial Jordan blocks corresponding to the eigenvalue zero (in this case $n^0(q)$ must be necessarily odd). 
\end{itemize}
Under the former condition we retrieve in particular the main result of \cite{HS09}. In case of planar (RE), the latter one must be added to the assumptions of the main result 
in \cite{BJP14} to conclude spectral instability.

The result above shows that the stability properties of (RE) are sensitive to the ``dimension''
of the corresponding central configurations. Indeed, all Morse-Bott non-degenerate non-planar central configurations are linearly unstable.
Moreover, under the assumption of Item 2, we see that different parities of the Morse index $n^-(q)$ force spectral instability according to the fact 
that $q$ is planar or not.  In particular, for $n\ge 4$ all (necessarily non-planar) local minima of $\widehat U$ satisfying the assumption in Item 2 are spectrally unstable. 
\qed
\end{rem}

\begin{proof}
$\ $

\begin{enumerate}
\item Under the given assumptions we readily see that 
$$n^0(B_3^d) = n^0(q)-3 \equiv 1 \quad \text{mod}\ 2, \qquad n^0(B_3^s) = \left \{\begin{array}{l} n^0(q) - 5 \\ n^0(q)-3 \end{array}\right. \equiv 1 \quad \text{mod}\, 2.$$
Therefore, 
$$\ker (JB_3^d) \subsetneq \ker (JB_3^d)^{8n-16},$$
as the generalized eigenspace must have even dimension. In particular, the matrix $JB_3^d$ has a non-trivial Jordan block structure (corresponding to the eigenvalue zero) and as such is linearly unstable. 
\item Assume that $\gamma$ is spectrally stable. Then the assumption implies in virtue of Theorem~\ref{thm:main-1} that $n^-(B_3^*)$ is even. In particular, we have that 
\begin{align*}
 4n-5 - n^-(q) - n^0(q) = n^+(q) = n^-(B_3^*) \equiv 0 \quad \text{mod}\ 2,
\end{align*}
which implies that 
$$n^-(q) + n^0(q) \equiv 1 \quad \text{mod}\ 2,$$
showing the first claim. To conclude (a) and (b) it suffices to  notice that in the first case $n^0(q)=6$ is even, whereas in the second one $n^0(q)=3,5$ depending on $q$ being collinear or planar not-collinear. 
\qedhere
\end{enumerate}
\end{proof}

%

\appendix

\section{Symmetries and integrals of motion}\label{subsec:integrals}

The $n$-body problem in $\R^4$ is invariant under the action of the $4$-dimensional Euclidean group $\Euc{4}$. By lifting such a group action to the phase space we can
use Noether's theorem to foliate the space with sets which are invariant under the Hamiltonian dynamics of the $n$-body problem. The goal of this appendix is to provide 
an explicit description of the differentiable structure of such  invariant sets. 

Let $g \in \Euc{4}$ be an element of the Euclidean group; so, $g:\R^4 \to \R^4$ takes the form 
\[
g\cdot q = Aq+ b \qquad A \in \OO(4) \textrm{ and } b \in \R^4.
\]
By lifting such an action to $T^*\X $ via 
\[
g\cdot (q,p)=(g \cdot q, p  \trasp{A}) 
\]
it is straightforward to check that both the Hamiltonian function $H$ as well as the Liouville one-form are invariant under the $g$-action.  For fixed $b \in \R^4$ we have a one-parameter family 
\[
g_b^s= \Id + s b
\]
whose generating vector field is 
\[
X_b\,\equiv b. 
\]
Noether's theorem implies that the quantity 
\[
F(q,p)\= p [X_b]= \left(\sum_{i=1}^n p_i\right)[b]
\]
is   a first integral of  motions for every $b \in \R^4$, that is, that the {\sc total linear momentum\/} 
\begin{equation}
	\overline p\= p_1 + \ldots p_n=\sum_{i=1}^n p_i \in (\R^4)^*
\end{equation}
is constant and determines the motion of the {\sc center of mass\/} 
\[
\overline q\= \dfrac{1}{\overline m}\sum_{i=1}^n m_i q_i, \qquad \overline m \= \sum_{i=1}^n m_i,
\]
which is namely given by 
\[
\overline q(t)= \overline q(t_0) + \overline p (t-t_0).
\]
If $t\mapsto \big(q(t), p(t)\big)$ is any solution of the $n$-body problem having center of mass at $\overline q$ and  momentum $\overline p$, then   
\[
\widehat q(t)\= q(t)-\overline q \qquad \widehat p(t)\= p(t)-\overline p
\]
is another solution having total momentum zero.   
Thus, one can without loss of generality study solutions with $\overline {p}=0$.  This implies that $\overline q$ is a vector-valued constant of motion. 
Hence it is not restrictive to assume $\overline q$=0. Summing up, the translational invariance of the $n$-body problem yields 8 integrals of motion. 

Let's now choose $1 \le k <l \le 4$ (so, only 6 choices are possible) and let us consider the one-parameter group of rotations in the $(q^k,q^l)$-plane. After rearranging the coordinates as $(q^k,q^l, *,*)$, we get 
\[
g_s= 
\begin{pmatrix}
  \begin{matrix}
  \cos s & -\sin s \\
  \sin s & \cos s
  \end{matrix}
  & \rvline & \zero \\
\hline  \zero & \rvline &
 \zero
\end{pmatrix}
\]
where $\zero$ denotes the $2 \times 2$ null matrix. The  generating vector field of $g_s$ is pointwise defined by 
\[
X(q)= \left(\dfrac{d}{ds}\Big\vert_{s=0} g^s\cdot q\right)= 
\begin{pmatrix}
  \begin{matrix}
 0 & -1 \\
  1 & 0
  \end{matrix}
  & \rvline &\zero \\
\hline  \zero & \rvline &
 \zero
 \end{pmatrix}\begin{pmatrix}
q^k\\ q^l\\0\\0
\end{pmatrix}=\begin{pmatrix}
-q^l\\ q^k\\0\\0
\end{pmatrix}
\]
Using Noether's theorem again we obtain (after rearranging the coordinates of $p$ accordingly) that the quantity 
\[
F(q,p)\= \sum_{i=1}^n p_i [X(q_i)]=\sum_{i=1}^n \big( -p_i^k[q_i^l]+ p_i^l[q^k_i]\big)
\]
is a first integral of  motion. Thus we get 6 additional first integrals 
\[
\Omega_{kl}, \qquad 1 \le k <l \le 4,
\]
which will be referred to as the {\sc angular momentum first integrals.\/} 

Summarizing, we have shown that for fixed constants 
$\omega_{kl}$, $1 \le k <l \le 4 $, the set 
\[
S_{\omega_{kl}}\=\Big \{ (q,p)\in T^*\X\ \Big  |\  \overline q= \overline p=0, \ \Omega_{kl}(q,p)=\omega_{kl}\Big \}
\]
is invariant under the Hamiltonian dynamics of the $n$-body problem. We will now investigate under which condition the set $S_{\omega_{kl}}$ is a smooth $(8n-14)$-dimensional manifold.

\begin{prop}
The 14 integrals of motion coming from the conservation of total linear momentum, center of mass and angular momentum are linearly independent except for the case in which all vectors $q_i,p_i$ for $i =1, \ldots n$ are coplanar. In such a case, the motion is planar.  
\end{prop}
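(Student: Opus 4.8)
The plan is to determine exactly when the differentials of the fourteen functions $\overline q^{k},\overline p^{k}$ ($k=1,\dots,4$) and $\Omega_{kl}$ ($1\le k<l\le 4$) become linearly dependent at a point of $S_{\omega_{kl}}$, and then to check that on that locus the motion is planar. First I would work in $T^{*}\R^{4n}\cong\R^{8n}$ with coordinates $(q,p)=(q_i^{k},p_i^{k})$ and note that $d\overline q^{k}$ spans the same line as $\sum_i m_i\,dq_i^{k}$. A general linear combination of the fourteen differentials is then encoded by two vectors $a,b\in\R^{4}$ (attached to the $d\overline p^{k}$ and the $d\overline q^{k}$) and an antisymmetric matrix $C=(c_{kl})$ (attached to the $d\Omega_{kl}$, extended antisymmetrically). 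Collecting the coefficient of each $dq_i^{k}$ and each $dp_i^{k}$ in $d\Omega_{kl}=\sum_i(p_i^{l}dq_i^{k}-p_i^{k}dq_i^{l}+q_i^{k}dp_i^{l}-q_i^{l}dp_i^{k})$ shows that the combination vanishes at $(q,p)$ if and only if
\[
Cp_i=m_i\,b\qquad\text{and}\qquad Cq_i=a\qquad\text{for all }i=1,\dots,n
\]
(signs and the factor $1/\overline m$ absorbed into $b$). Hence the fourteen differentials are dependent at $(q,p)$ precisely when some triple $(a,b,C)\neq(0,0,0)$ solves this system.

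Next I would analyse that system. If $C=0$ then $a=0$ and, since every $m_i>0$, also $b=0$, so a nontrivial relation forces $C\neq0$; a nonzero real antisymmetric $4\times4$ matrix has rank $2$ or $4$. If $\operatorname{rank}C=4$, then $q_i=C^{-1}a$ is the same for every $i$, and $\sum_i m_i q_i=0$ (which holds on $S_{\omega_{kl}}$) then forces $q=0$, whence $\sum_i p_i=0$ forces $b=0$ and $p=0$; the claim holds trivially there. So suppose $\operatorname{rank}C=2$, and let $P=\operatorname{im}C$, a linear $2$-plane, with the orthogonal splitting $\R^{4}=\ker C\oplus P$ on which $C|_{P}$ is invertible. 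From $Cq_i=a\in P$ the $P$-component of $q_i$ equals $(C|_{P})^{-1}a$, the same for all $i$; likewise the $P$-component of $p_i$ equals $m_i(C|_{P})^{-1}b$. Feeding in the identities $\sum_i m_i q_i=0$ and $\sum_i p_i=0$ and projecting along $P$, and using $\sum_i m_i>0$, forces $a=b=0$ and $q_i,p_i\in\ker C$ for all $i$. Thus all $q_i$ and all $p_i$ lie in the common $2$-plane $\ker C$. Conversely, if every $q_i,p_i$ lies in a $2$-plane $\Pi$, choosing a nonzero antisymmetric $C$ with $\ker C=\Pi$ gives $Cq_i=Cp_i=0$ and hence a nontrivial dependence with $a=b=0$. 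This proves the ``except'' clause.

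For the last sentence, suppose all $q_i,p_i$ lie in a $2$-plane $\Pi$ through the origin; since $\dot q_i=m_i^{-1}p_i$ this says $q_i,\dot q_i\in\Pi$ for all $i$. The acceleration $\ddot q_i=\sum_{j\neq i}m_j(q_i-q_j)/|q_i-q_j|^{3}$ is a linear combination of the vectors $q_i-q_j\in\Pi$, hence lies in $\Pi$; therefore $\{(q,p):q_i,p_i\in\Pi\ \forall i\}$ is invariant under the flow and the solution through $(q,p)$ stays in $\Pi$, i.e. the motion is planar.

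The step I expect to be the main obstacle is the rank-$2$ analysis: on its own the pair $Cq_i=a$, $Cp_i=m_ib$ only forces the $q_i$ (resp. the $p_i$) to coincide modulo $\ker C$, which is strictly weaker than coplanarity. What upgrades this to the sharp statement is that linear independence is tested at points of $S_{\omega_{kl}}$, where both the center of mass and the total linear momentum vanish; substituting these two vector identities into the splitting $\R^{4}=\ker C\oplus P$ is precisely what kills $a$ and $b$ and forces every $q_i$ and $p_i$ into $\ker C$. The accompanying bookkeeping — keeping track of the factor $1/\overline m$ and of the positivity of the masses — is routine.
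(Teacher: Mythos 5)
Your proof is correct and follows essentially the same route as the paper: encode the linear dependence by $(a,b,C)$ with $C$ antisymmetric (the paper's $\lambda_1,\dots,\lambda_8$ and the matrix $\Lambda$), use $\overline q=\overline p=0$ to force $a=b=0$, and conclude from the fact that a nonzero real antisymmetric $4\times 4$ matrix has kernel of dimension at most $2$ that all $q_i,p_i$ lie in a common $2$-plane. Your rank-$2$/rank-$4$ case analysis can be shortcut exactly as the paper does (multiplying $Cq_i=a$ by $m_i$ and summing over $i$, resp.\ summing $Cp_i=m_i b$ over $i$, kills $a$ and $b$ at once), while your explicit converse and the flow-invariance argument for ``the motion is planar'' supply details the paper leaves implicit.
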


\begin{proof}
To prove the claim we just have to show  that the $(8n\times 14)$-dimensional matrix defined by 
\[
\begin{pmatrix}
	\nabla \overline q \ |\ \nabla \overline p\ | \ \nabla \Omega
\end{pmatrix}
\]
has rank strictly less than $14$ at $(q,p)$ if and only if all vectors $q_i,p_i$ for $i =1, \ldots n$ are coplanar (as usual we identify $p$ with the corresponding column vector). 
By a straightforward computation, we get that 
\[
\nabla \overline q= \dfrac{1}{\overline m}\left(\begin{array}{@{}c@{}}
m_1 \Id_4 \\ \vdots \\ m_n\Id_4\\ \hline 0_4 \\ \vdots \\ 0_4
\end{array}\right), \qquad 
\nabla \overline p=\left(\begin{array}{@{}c@{}}
0_4\\ \vdots \\0_4\\ \hline 
 \Id_4 \\ \vdots \\ \Id_4
\end{array}\right),
\]
and 
\[
\nabla \Omega=\left(\begin{array}{@{}cccccc@{}}
p_2^1 & p_3^1 &  p_4^1 & 0 & 0 & 0 \\
- p_1^1 & 0 & 0 & p_3^1& p_4^1 & 0 \\
0 & -p_1^1& 0 & -p_2^1 & 0 & p_4^1\\
0 & 0&  -p_1^1 & 0 & -p_2^1 & -p_3^1\\
\hline
\vdots &\vdots & \vdots & \vdots &\vdots &\vdots \\ \hline \\
p_2^n & p_3^n &  p_4^n & 0 & 0 & 0 \\
- p_1^n & 0 & 0 & p_3^n& p_4^n & 0 \\
0 & -p_1^n& 0 & -p_2^n & 0 & p_4^n\\
0 & 0&  -p_1^n & 0 & -p_2^n & -p_3^n\\
\hline\\
-q_2^1 & -q_3^1 &  -q_4^1 & 0 & 0 & 0 \\
q_1^1 & 0 & 0 & -q_3^1& -q_4^1 & 0 \\
0 & q_1^1& 0 & q_2^1 & 0 & -q_4^1\\
0 & 0&  q_1^1 & 0 &q_2^1 & q_3^1\\
\hline
\vdots &\vdots & \vdots & \vdots &\vdots &\vdots \\ \hline \\
-q_2^n & -q_3^n &  -q_4^n & 0 & 0 & 0 \\
q_1^n & 0 & 0 & -q_3^n& -q_4^n & 0 \\
0 & q_1^n& 0 & q_2^n & 0 & -q_4^n\\
0 & 0&  q_1^n & 0 &q_2^n & q_3^n\\
\end{array}\right)= 
\left(\begin{array}{@{}c@{}}
	\rvline\\ \nabla^1 \Omega\\\rvline  \\ \hline   \rvline \\ \nabla^2 \Omega\\ \rvline  
\end{array}
\right)
\]
The vectors $\nabla \overline q$, $\nabla \overline p$, $\nabla \Omega$ are thus linearly dependent if and only if the following two systems of linear equations
\begin{equation}\label{eq:eq1}
\begin{cases}
	m_i \lambda_1 + \lambda_9 p_2^i+ \lambda_{10}p_3^i+ \lambda_{11} p_4^i=0\\
	m_i \lambda_2 - \lambda_9 p_1^i+ \lambda_{12}p_3^i+ \lambda_{13} p_4^i=0\\
	m_i \lambda_3 - \lambda_{10} p_1^i+ \lambda_{12}p_2^i+ \lambda_{14} p_4^i=0\\
	m_i \lambda_4 - \lambda_{11} p_1^i- \lambda_{13}p_2^i- \lambda_{14} p_3^i=0
\end{cases}\qquad \forall\, i =1, \ldots, n
\end{equation}
and 
\begin{equation}\label{eq:eq2}
\begin{cases}
	 \lambda_5 - \lambda_9 q_2^i- \lambda_{10}q_3^i- \lambda_{11} q_4^i=0\\
	\lambda_6 + \lambda_9 q_1^i- \lambda_{12}q_3^i- \lambda_{13} q_4^i=0\\
	 \lambda_7 + \lambda_{10} q_1^i+\lambda_{12}q_2^i- \lambda_{14} q_4^i=0\\
	 \lambda_8 + \lambda_{11} q_1^i+ \lambda_{13}q_2^i+ \lambda_{14} q_3^i=0
\end{cases}\qquad \forall\, i =1, \ldots, n
\end{equation}
have a non-trivial solution $\lambda_1,...,\lambda_{14}$.
By summing over $i$ in each line of~\eqref{eq:eq1} and~\eqref{eq:eq2} we obtain
\[
\lambda_1= \lambda_2 = \ldots= \lambda_8=0
\]
(here we use the fact that $\overline q=\overline p=0$). Moreover, for each $i=1, \ldots, n$ the vectors $q_i, p_i$ are contained in the linear solution subspace $V\subset \R^4$ of the following (linear) system 
\begin{equation}\label{eq:eq3}
\begin{cases}
 \lambda_9 x_2+ \lambda_{10}x_3+ \lambda_{11} x_4=0\\
 - \lambda_9 x_1+ \lambda_{12}x_3+ \lambda_{13} x_4=0\\
 - \lambda_{10} x_1+ \lambda_{12}x_2+ \lambda_{14} x_4=0\\
- \lambda_{11} x_1- \lambda_{13}x_2- \lambda_{14} x_3=0
\end{cases}
\end{equation}
whose coefficient matrix is
\[
\Lambda= 
\begin{pmatrix}
	0 & \lambda_9& \lambda_{10} & \lambda_{11}\\
	-\lambda_9 &0&\lambda_{12} & \lambda_{13}\\
	-\lambda_{10} & -\lambda_{12} & 0 & \lambda_{14}\\
	-\lambda_{11} & -\lambda_{13} & -\lambda_{14} & 0
\end{pmatrix}
\]
It is readily seen that $V$ is at most 2-dimensional as soon as at least one $\lambda_j$ is non-zero. This implies that the considered first integral can be linearly dependent if and only if all $q_i$ and $p_i$ are 
coplanar. 
\end{proof}

\begin{rem}
If all $q_i$ and $p_i$ are coplanar, then up to a rotation we may assume that they are all contained in  $\R^2 \times \{0\}\subset \R^4$. By definition, this implies that 
\begin{align*}
\Omega_{kl}(q,p)=0 \quad \forall\, (k,l)\neq(1,2), \qquad 
\Omega_{12}(q,p)=\begin{cases}
0 & \textrm{ all }  q_i\  \text{and} \ p_i \textrm{ are collinear}\\
\neq 0 & \textrm{ otherwise }	
\end{cases} 
\end{align*}
The case in which all vectors $q_i, p_i$ are coplanar can therefore be ruled out by assuming that 
$
\omega_{kl} \neq 0$ for at least two different pairs $(k,l)$. \qed
\end{rem}

As the Hamiltonian $H$ of the $n$-body problem is autonomous, $H$ provides the 15-th integral of motion. For $h\in \R$, the invariant set 
$$S_{\omega_{kl},h} := S_{\omega_{kl}} \cap \{H=h\}$$
is a smooth $8n-15$-dimensional manifold provided the 15 integral of motions are linearly independent. In particular, as seen above, we must have that $\omega_{kl}\neq 0$ for at least two different pairs $(k,l)$. 
In order to understand when the 15 integrals of motions are independent one should understand in which cases 
\[
\nabla H(q,p)= (\nabla \overline q\cdot u) \  (\nabla \overline p\cdot v)+ \nabla \Omega(q,p)\cdot w
\]
possesses non-trivial solutions $u,v \in \R^4$ and $w \in \R^6$. Such a condition translates into the following system 
\begin{equation}\label{eq:dip15integrals-1}
\begin{cases}
	-\nabla U(q)= \dfrac{1}{\overline m} u \begin{pmatrix}m_1 \Id_4\\ \vdots\\ m_n \Id_4\end{pmatrix} + w \cdot \nabla^1 \Omega(q,p)\\
	M^{-1} p = v \begin{pmatrix}\Id_4\\ \vdots\\  \Id_4
 \end{pmatrix}
 + w \cdot \nabla^2\Omega(q,p)
\end{cases}
\end{equation}
By summing up the  $n$ four dimensional components of the first equation in \eqref{eq:dip15integrals-1}, we get
\[
0=u+ w \cdot \underbrace{\begin{pmatrix}
 	\overline{p_2}&\overline{p_3}& \overline{p_4}&0&0&0\\
 	-\overline{p_1}&0&0&\overline{p_3}& \overline{p_4}&0\\
 	0 & -\overline{p_1}&0& -\overline{p_2}& 0&\overline{p_4}\\
 	0&0&-\overline{p_1}&0& -	\overline{p_2}&-\overline{p_3}
 \end{pmatrix}}_{=0\  (\textrm{since } \overline p=0)}=u
\]
Similarly, summing  up the  $n$ four dimensional components of the second equation in \eqref{eq:dip15integrals-1} with weights $m_i$ and since we are assuming that the total mass of the system is 1, yields
\[
0=\overline p=  v + w \cdot \underbrace{\begin{pmatrix}
 	- \overline m\,\overline{q_2}&- \overline m\,\overline{q_3}& - \overline m\,\overline{q_4}&0&0&0\\
 	 \overline m\,\overline{q_1}&0&0&- \overline m\,\overline{q_3}& - \overline m\,\overline{q_4}&0\\
 	0 &  \overline m\,\overline{q_1}&0& \overline m\, \overline{q_2}& 0&- \overline m\,\overline{q_4}\\
 	0&0& \overline m\,\overline{q_1}&0& \overline m\, \overline{q_2}& \overline m\,\overline{q_3}
 \end{pmatrix}}_{=0\   (\textrm{since } \overline q=0)}=  v \ \Rightarrow v=0.
\]
It follows that the 15 integrals of motions are linearly independent if and only if 
\begin{equation}
\nabla H(q,p)= w \cdot \nabla \Omega(q,p)
\label{eq:finalintegral}
\end{equation}
does not possess non-trivial solutions $w\in \R^6$. Understanding whether or not~\eqref{eq:finalintegral} admit non-trivial solutions seems to be quite involved. As this is not relevant 
for the present work, we rather leave it for further work.

\vspace{1cm}
\noindent
\textsc{Dr. Luca Asselle},
Justus Liebig Universit\"at Gie\ss en,
Arndtrstrasse 2,
35392, Gie\ss en,
Germany\\
E-mail: $\mathrm{luca.asselle@math.uni}$-$\mathrm{giessen.de}$

\vspace{5mm}
\noindent
\textsc{Prof. Alessandro Portaluri},
Università degli Studi di Torino,
Largo Paolo Braccini 2,
10095 Grugliasco, Torino,
Italy,
E-mail: $\mathrm{alessandro.portaluri@unito.it}$

\vspace{5mm}
\noindent
\textsc{Prof. Li Wu},
Department of Mathematics,
Shandong University,
Jinan, Shandong, 250100,
China\\
E-mail: $\mathrm{vvvli@sdu.edu.cn}$

\end{document}